\let\c@table\c@figure
\newtheorem{theorem}{Theorem}[section]
\newtheorem{proposition}[theorem]{Proposition}
\newtheorem{lemma}[theorem]{Lemma}
\newtheorem{corollary}[theorem]{Corollary}
\theoremstyle{definition}
\newtheorem{definition}[theorem]{Definition}
\newtheorem{example}[theorem]{Example}
\newtheorem{convention}[theorem]{Convention}
\theoremstyle{remark}
\newtheorem{remark}[theorem]{Remark}
\newtheorem{notes}[theorem]{Notes}
\newcommand{\R}{\mathcal{R}}  
\newcommand{\rset}[1]{\mathcal{D}#1}
\newcommand{\newword}[1]{\textbf{#1}}
\newcommand{\edge}{\varepsilon}
\newcommand{\edgetwo}{\zeta}
\DeclareMathOperator{\CAT}{CAT}
\newcommand{\G}{\mathcal{G}}
\newcommand{\plaingraphics}[1]{\includegraphics{#1}}  
\newcommand{\vgraphics}[1]{\raisebox{-0.5\height}{\plaingraphics{#1}}}  
\newcommand{\longvarrow}{\raisebox{-0.5ex}{$\longrightarrow$}}  
\renewcommand{\exp}{\smalltriangleleft}
\newcommand{\cube}{\mathrm{cube}}
\newcommand{\Rloop}{R_{\mathrm{loop}}}
\title{Rearrangement Groups of Fractals}
\author{James Belk and Bradley Forrest}
\date{}  
\begin{document}

\maketitle

\pdfbookmark[1]{Introduction}{intro}
\begin{abstract}
\noindent We construct rearrangement groups for edge replacement systems, an infinite class of groups that generalize Richard Thompson's groups $F$, $T$, and~$V$.  Rearrangement groups act by piecewise-defined homeomorphisms on many self-similar topological spaces, among them the Vicsek fractal and many Julia sets.  We show that every rearrangement group acts properly on a locally finite $\mathrm{CAT}(0)$ cubical complex, and we use this action to prove that certain rearrangement groups are of type $F_{\infty}$.
\end{abstract}


\section*{Introduction}

In this paper we construct rearrangement groups, a class of groups that act by homeomorphisms on a large family of self-similar topological spaces.  This class includes Richard Thompson's groups $F$, $T$, and $V$, and many of the groups we construct have Thompson-like properties.  For example, we prove that every rearrangement group acts properly by isometries on a $\mathrm{CAT}(0)$ cubical complex, generalizing the complexes for $F$, $T$, and $V$ defined by Farley~\cite{Farley1,Farley2}.  By analyzing the geometry of these complexes, we show that certain rearrangement groups have type~$F_\infty$, generalizing results of Brown and Geoghegan~\cite{BroGeo,Bro}.

The spaces that these groups act on arise as limits of finite graphs.  Starting with a base graph~$G_0$, we repeatedly apply a certain edge replacement rule $e\to R$ to obtain a sequence $\{G_n\}$ of finite graphs.  This sequence converges to a limit space~$X$, which is usually a fractal space with a graph-like structure.  Figure~\ref{fig:LimitSpaces} shows two well-known fractals that can be obtained (up to homeomorphism) in this fashion: the basilica Julia set (the Julia set for~$z^2-1$) and the Vicsek fractal.
\begin{figure}
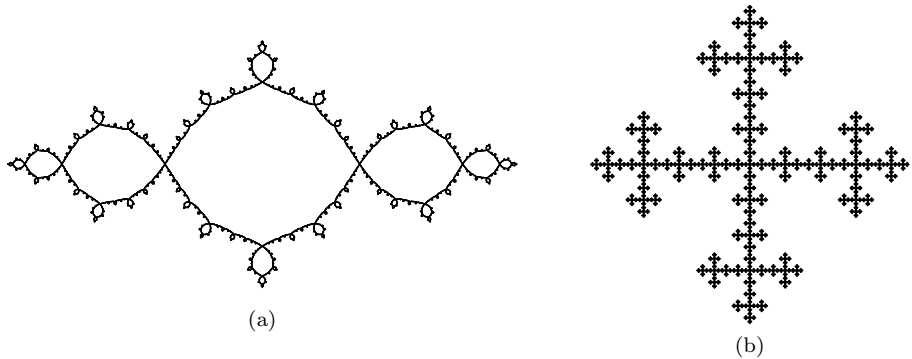

\centering
\subfloat[\label{subfig:Basilica}]{\vgraphics{Basilica}}
\hfill
\subfloat[\label{subfig:Vicsek}]{\vgraphics{VicsekFractal}}
\caption{(a) The basilica Julia set. (b) The Vicsek fractal.}
\label{fig:LimitSpaces}
\end{figure}

By the nature of this construction, each limit space comes equipped with certain special subsets called cells, which correspond to edges of graphs in the sequence.  Each cell is topologically ``self-similar'' in the sense that it canonically homeomorphic with certain proper subspaces of itself.  A homeomorphism of the limit space is called a rearrangement if it preserves this structure, i.e.~if it locally maps cells to cells in the canonical way.  For example, Figure~\ref{fig:VicsekRearrangement} shows a rearrangement of the Vicsek fractal.
\begin{figure}
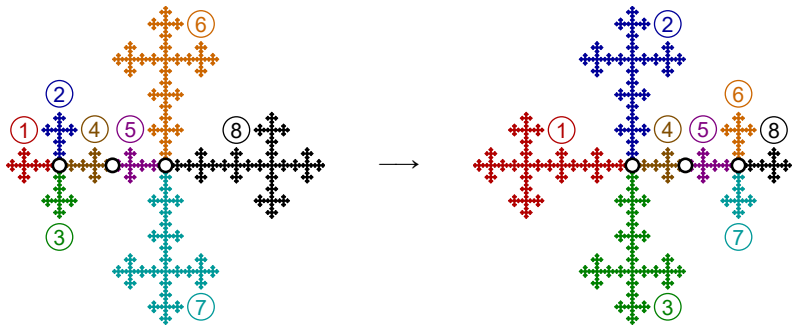

\centering
\vgraphics{Vicsek1}\qquad\longvarrow\qquad\vgraphics{Vicsek2}
\caption{A rearrangement of the Vicsek fractal.  Each numbered cell on the left maps to the corresponding cell on the right via a canonical homeomorphism.}
\label{fig:VicsekRearrangement}
\end{figure}

The set of all rearrangements of a limit space forms a group under composition, called the rearrangement group.  For a self-similar fractal in the plane such as the Vicsek fractal, the rearrangement group acts by piecewise-similar homeomorphisms, i.e.~homeomorphisms constructed by pasting together finitely many Euclidean similarity transformations.  For a limit space homeomorphic to a Julia set, the rearrangement group typically acts on the Julia set by piecewise-conformal homeomorphisms.

Thompson's groups $F$, $T$, and $V$ are special cases of rearrangement groups, corresponding to certain realizations of the closed interval, the circle, and the Cantor set as limit spaces.  In the case of Thompson's group~$F$, the cells of the limit space are precisely the standard dyadic intervals~$\bigl[i/2^j,(i+1)/2^j\bigr]$, with the canonical homeomorphism between two cells being the orientation-preserving linear map.  Thus $F$ is the group of rearrangements of the unit interval that preserves the self-similar structure defined by the standard dyadic intervals.

In~\cite{BeFo}, the authors described a group $T_B$ of homeomorphisms of the basilica Julia set, which were defined using piecewise-linear functions on the B\"{o}ttcher coordinates.  This group is also a special case of a rearrangement group, where the basilica is realized as a limit space of a sequence of graphs in an appropriate way.  We conjectured in~\cite{BeFo} that $T_B$ is not finitely presented, and this was recently proven by S.~Witzel and M.~Zaremsky using the $\mathrm{CAT}(0)$ complex we describe here~\cite{WiZa}.

The idea of constructing fractals as limits of sequences of finite graphs is not new.  For example, Laplacians on fractals are often constructed by realizing the fractal as the limit of a sequence of metric graphs~\cite{Ki,St}.  Certain Julia sets also arise as limits of sequences of finite Schreier graphs in Bartholdi and Nekrashevych's construction of iterated monodromy groups~\cite{Ne}.  However, with the exception of~\cite{BeFo}, it seems that groups of piecewise-similar homeomorphisms of such spaces have not previously been considered.

This paper is organized as follows.  Section~\ref{sec:rearrangement} introduces our terminology and context, defines rearrangement groups and limit spaces, and develops an analogue of tree pair diagrams for rearrangements. We begin to explore the resulting groups in Section~\ref{sec:RearrangementGroups}.  After discussing several examples, including two infinite families generalizing the basilica and Vicsek groups, we discuss the relationship between rearrangement groups and generalized Thompson groups, and we provide a classification of all finite subgroups of a rearrangement group.  We also introduce colored replacement systems, a generalization of edge replacement systems that allows us to construct rearrangement groups for a broader class of fractals. Section~\ref{sec:complexes} uses the techniques of Farley from \cite{Farley1} and \cite{Farley2} to construct a locally finite $\mathrm{CAT}(0)$ cubical complexes on which rearrangement groups act properly.  Together with Brown's criterion, and Bestvina and Brady's discrete Morse Theory, this action can be used to show that many rearrangement groups are of type $F_{\infty}$.  We produce a condition on the replacement system sufficient to show that the rearrangement group is of type $F_{\infty}$ in Theorem~\ref{thm:finfty} of Section~\ref{sec:finiteness}, and apply this theorem to show that all of the groups corresponding to the Vicsek family of fractals are of type~$F_{\infty}$.

\section{Limit Spaces and Rearrangements}
\label{sec:rearrangement}

In this section, we introduce limit spaces obtained from edge replacement rules and the corresponding rearrangement groups.  These groups are built by applying replacement rules to edges in graphs.  We set our context, defining replacement rules in Subsection~\ref{subsec:replacement}.  Repeated application of a replacement rule gives rise to a limit topological space, which we discuss in Subsection~\ref{subsec:limitspace}.  Subsection~\ref{subsec:cellrearrange} introduces rearrangements, which are a particular type of homeomorphism on these limit spaces that permute certain subsets, cells, of the limit space.  Graph pair diagrams, our primary graphical representation of elements of rearrangement groups, are developed in Subsection~\ref{subsec:graphpair}. Subsection~\ref{subsec:top} studies the topology of the limit space, and provides the technical details underpinning the theoretical development of this section.  Finally, Subsection~\ref{subsec:metrics} defines a natural family of metrics on the limit space. With respect to these metrics, rearrangements act as piecewise-similar homeomorphisms.

\subsection{Replacement Rules}
\label{subsec:replacement}

Throughout this paper, the word \textbf{graph} will refer to finite directed multigraphs, where both loops and multiple edges are allowed.  All isomorphisms between graphs are assumed to preserve the directions of the edges.

\begin{definition}An \newword{(edge) replacement rule} is a pair of the form $e\to R$, where
\begin{enumerate}
\item $e$ is a single (non-loop) directed edge, and
\item $R$ is a finite directed graph with distinguished vertices $v$ and $w$, which we require to be distinct.
\end{enumerate}
\end{definition}

The graph $R$ is called the \newword{replacement graph}, and the vertices $v$ and $w$ are referred to as the \newword{initial vertex} and \newword{terminal vertex} of~$R$.  Together these are the \newword{boundary vertices} of~$R$, and the remaining vertices of $R$ (if any) are \newword{interior vertices}.

Given a directed graph $G$ and a replacement rule $e\to R$, we can \newword{replace} (or \newword{expand}) any edge $\edge$ of $G$ by removing it and pasting in a copy of~$R$, attaching the initial and terminal vertices of $R$ respectively to the initial and terminal vertices of~$\edge$.  The resulting graph $G \exp \edge$ is called a \newword{simple expansion} of~$G$.

When discussing edge replacement, we adopt the convention that the vertices and edges of $G$ and $R$ can be treated as symbols from a finite alphabet, meaning that we are free to concatenate these symbols into sequences.  In particular, we will use the following notation for the new edges and vertices of~$G \exp \edge$:
\begin{enumerate}
\item Each new edge of $G \exp \edge$ has the form $\edge\edgetwo$, where $\edge$ is the edge of $G$ that was replaced, and $\edgetwo$ is any edge of~$R$.
\item Similarly, each new vertex of $G \exp \edge$ has the form $\edge\nu$, where $\nu$ is any interior vertex of~$R$.
\end{enumerate}

\begin{example}\label{ex:SimpleExpansion}Consider the replacement rule shown in Figure~\subref*{subfig:BasilicaReplacementRule}.  Note that the three edges of the replacement graph $R$ correspond to the symbols $\textsf{0}$, $\textsf{1}$, and~$\textsf{2}$, while the vertex corresponds to the symbol~$\textsf{v}$.
\begin{figure}
\centering
\subfloat[\label{subfig:BasilicaReplacementRule}]{\qquad\vgraphics{EdgeReplacement1Tall} \quad\longvarrow\quad \vgraphics{BasilicaEdgeReplacementVertexLabeled}}
\qquad\qquad
\subfloat[\label{subfig:BasilicaBaseGraph}]{\vgraphics{BasilicaBase}}
\caption{The basilica replacement system. Here and in the future, the initial and terminal vertices will be indicated by a yellow dot and a blue square, respectively.}
\label{fig:BasilicaReplacement}
\end{figure}

\begin{figure}
\centering
\subfloat[\label{subfig:SimpleExpansion}]{\vgraphics{BasilicaSimpleExpansion}}
\hfill
\subfloat[\label{subfig:Expansion}]{\vgraphics{BasilicaExpansion}}
\caption{(a) A simple expansion of the graph in Figure~\protect\subref*{subfig:BasilicaBaseGraph}. (b) An expansion of that same graph.}
\label{fig:Expansions}
\end{figure}
Figure~\subref*{subfig:BasilicaBaseGraph} shows a directed graph $G$ with edges $\mathsf{L}$, $\mathsf{R}$, $\mathsf{T}$, $\mathsf{B}$, and Figure~\subref*{subfig:SimpleExpansion} shows the simple expansion $G \exp \mathsf{T}$.  Note that the edge $\textsf{T}$ of $G$ was replaced by a new subgraph with edges $\textsf{T0}$, $\textsf{T1}$, and $\textsf{T2}$ and a new vertex~$\textsf{Tv}$.
\end{example}

It is possible to iterate the process of simple expansion.  In general, an \newword{expansion} of a graph $G$ is any graph $E$ obtained from $G$ through a sequence of simple expansions.  Each new edge or vertex of $E$ can be described as a sequence.  In particular:
\begin{enumerate}
\item Each edge of $E$ finite sequence $\edge_0 \edge_1 \cdots \edge_n$ ($n\geq 0$), where $\edge_0$ is an edge of~$G$ and each $\edge_i$ for $1\leq i\leq n$ is an edge of $R$.
\item Each vertex of $E$ is either a vertex of~$G$, or is a finite sequence $\edge_0\edge_1\cdots\edge_n\nu$, where $\edge_0$ is an edge in~$G$, each $\edge_i$ for $1\leq i\leq n$ is an edge of~$R$, and $\nu$ is an interior vertex of~$R$.
\end{enumerate}

\begin{example}\label{ex:Expansion} Figure~\subref*{subfig:Expansion} shows the expansion $G \exp \mathsf{R} \exp \mathsf{B} \exp \mathsf{B0}$ of the graph $G$ from Figure~\subref*{subfig:BasilicaBaseGraph}, using the replacement rule shown in Figure~\subref*{subfig:BasilicaReplacementRule}.  Note that order in which the edges are replaced is irrelevant, in the sense that $G \exp \mathsf{R} \exp \mathsf{B} \exp \mathsf{B0}$, $G \exp \mathsf{B} \exp \mathsf{R} \exp \mathsf{B0}$, and $G \exp \mathsf{B} \exp \mathsf{B0} \exp \mathsf{R}$ all give the same expansion, though of course $\mathsf{B}$ must be expanded before~$\mathsf{B0}$.
\end{example}

\subsection{The Limit Space}
\label{subsec:limitspace}

\begin{definition}An \newword{(edge) replacement system} $\R$ is a pair $(G_0,e\to R)$, where $G_0$ is a finite, directed graph called the \newword{base graph}, and $e\to R$ is a replacement rule.
\end{definition}

Given a replacement system $(G_0,e\to R)$, the \newword{full expansion} of $G_0$ is the graph $G_1$ obtained by replacing every edge of~$G_0$.  Iterating this process, we obtain the \newword{full expansion sequence} $\{G_n\}_{n=0}^\infty$, where each $G_n$ is the full expansion of $G_{n-1}$.  Note that the edges of $G_n$ are precisely the elements of $E(G_0) \times E(R)^n$, where $E(G)$ denotes the set of edges of a graph~$G$.

\begin{figure}
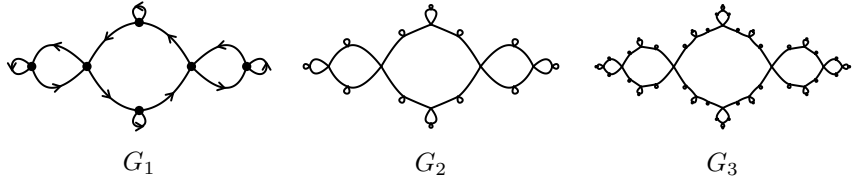

\centering
$\underset{\textstyle G_1\rule{0pt}{12pt}}{\vgraphics{BasilicaStage1BlankSmall}}
\quad
\underset{\textstyle G_2\rule{0pt}{12pt}}{\vgraphics{BasilicaStage2Blank}}
\quad
\underset{\textstyle G_3\rule{0pt}{12pt}}{\vgraphics{BasilicaStage3Blank}}$
\caption{Three graphs from the full expansion sequence for the basilica replacement system.  To improve readability, we have removed the arrowheads and vertex dots from the pictures of $G_2$ and~$G_3$.}
\label{fig:BasilicaFullExpansions}
\end{figure}
\begin{example}[The Basilica]\label{ex:BasilicaReplacement} Consider again the replacement system shown in Figure~\ref{fig:BasilicaReplacement}. The first few stages of the full expansion sequence for this replacement system are shown in Figure~\ref{fig:BasilicaFullExpansions}.

These graphs can be viewed as successive approximations to a fractal set known as the \newword{basilica}, which is shown in~Figure~\subref*{subfig:Basilica}.  The basilica is the Julia set for the function $f(z) = z^2 - 1$, i.e.~the boundary of the basin of infinity for this function.
\end{example}

The graphs in the full expansion sequence $\{G_n\}$ for a replacement system~$\R$ typically converge to a compact Hausdorff space~$X$, which we refer to as the ``limit space'' for~$\R$.  We shall define this limit space precisely as the quotient of a certain space of infinite sequences under an appropriate equivalence relation.

\begin{definition}The \newword{symbol space} $\Omega$ for a replacement system $(G_0,e\to R)$ is the space of all infinite sequences
\[
\edge_0\,\edge_1\,\edge_2\,\cdots
\]
where $\edge_0$ is an edge from~$G_0$, and each $\edge_i$ for $i\geq 1$ is an edge from~$R$.
\end{definition}

That is, the symbol space $\Omega$ is the infinite product $E(G_0) \times E(R)^\infty$, where $E(G_0)$ denotes the set of edges of $G_0$, and $E(R)$ denotes the set of edges of~$R$.  We endow $\Omega$ with the product topology.  Being an infinite product of finite sets, $\Omega$ is homeomorphic to the Cantor set (assuming $R$ has at least two edges).

\begin{definition}\label{def:LimitSpace}Let $\R = (G_0,e\to R)$ be a replacement system with full expansion sequence $\{G_n\}$ and symbol space~$\Omega$.  The \newword{gluing relation} on $\Omega$ is the equivalence relation $\sim$ defined as follows: two sequences
\[
\edge_0\,\edge_1\,\edge_2\,\cdots \qquad\text{and}\qquad \edge_0'\,\edge_1'\,\edge_2'\,\cdots
\]
are equivalent if for all $n$ the edges of $G_n$ with addresses
\[
\edge_0\,\edge_1\,\cdots\,\edge_n \qquad\text{and}\qquad \edge_0'\,\edge_1'\,\cdots\,\edge_n'
\]
share at least one vertex.  The \newword{limit space} $X$ for $\R$ is the quotient~$\Omega/{\sim}$.\end{definition}

Unfortunately, the gluing relation $\sim$ as defined above is not always an equivalence relation.  We will now state certain technical assumptions that need to be placed on a replacement system.

\begin{definition}
\label{def:expanding}A replacement system $\R = (G_0,e\to R)$ is \newword{expanding} if the following conditions are satisfied:
\begin{enumerate}
\item Neither $G_0$ nor $R$ has any isolated vertices.
\item The initial and terminal vertices of $R$ are not connected by an edge.
\item $R$ has at least three vertices and two edges.
\end{enumerate}
\end{definition}

Subsection \ref{subsec:top} discusses the point-set topology of the gluing relation and the limit space.  The following proposition is proven in Proposition~\ref{prop:GluingRelation} and Theorem~\ref{thm:hausdorff}.

\begin{proposition}If\/ $\R$ is an expanding replacement system, then the gluing relation~$\sim$ is an equivalence relation, and the limit space $X=\Omega/{\sim}$ is compact and metrizable.\hfill\qedsymbol
\end{proposition}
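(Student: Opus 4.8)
The plan is to check the equivalence‑relation axioms by hand (only transitivity is substantive, and it is the only place the expanding hypotheses are used), and then to extract compactness and metrizability from soft point‑set topology once one observes that the symbol space $\Omega$ is a Cantor set and that $\sim$ is a \emph{closed} subset of $\Omega\times\Omega$.

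Reflexivity and symmetry are immediate from Definition~\ref{def:LimitSpace}. For transitivity I first record a ``no‑bypass'' lemma for a single full expansion. Writing $e_\alpha$ for the edge of $G_n$ with address $\alpha\in E(G_0)\times E(R)^n$, let $\epsilon\neq\zeta$ be two distinct edges of $G_n$. In $G_{n+1}$ a subdivision edge of $\epsilon$ has an address of the form $\epsilon\eta$ with $\eta\in E(R)$, and its endpoints lie among the two endpoints of $\epsilon$ in $G_n$ together with the new vertices $\epsilon\mu$ ($\mu$ an interior vertex of $R$); by the addressing convention a vertex of the form $\epsilon\mu$ is never an old vertex of $G_n$, and is never equal to $\zeta\mu'$ because $\epsilon\neq\zeta$ as symbols. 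Hence any vertex lying simultaneously on a subdivision edge of $\epsilon$ and on a subdivision edge of $\zeta$ must be a vertex of $G_n$ incident to both $\epsilon$ and $\zeta$, i.e.\ a common endpoint of $\epsilon$ and $\zeta$. Now suppose $a\sim b$ and $b\sim c$, fix $n$, and let $a|n$ denote the length‑$(n{+}1)$ prefix; we must show $e_{a|n}$ and $e_{c|n}$ share a vertex. If two of $e_{a|n},e_{b|n},e_{c|n}$ coincide, or if $e_{a|n}\cap e_{b|n}\cap e_{c|n}\neq\emptyset$, this is immediate; otherwise, assume for contradiction $e_{a|n}\cap e_{c|n}=\emptyset$. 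Choosing witnesses $p\in e_{a|n}\cap e_{b|n}$ and $q\in e_{b|n}\cap e_{c|n}$, one has $p\neq q$, then $\{p,q\}\subseteq V(e_{b|n})$ forces $e_{b|n}$ to be a non‑loop edge with $V(e_{b|n})=\{p,q\}$, and $e_{a|n}\cap e_{b|n}=\{p\}$, $e_{b|n}\cap e_{c|n}=\{q\}$ since $q\notin e_{a|n}$ and $p\notin e_{c|n}$. Applying the no‑bypass lemma at level $n{+}1$ to the pairs $(e_{a|n},e_{b|n})$ and $(e_{b|n},e_{c|n})$ — which do share vertices in $G_{n+1}$ because $a\sim b$ and $b\sim c$ — forces $e_{b|(n+1)}$ to be incident to both $p$ and $q$, hence (as $p\neq q$) to have exactly those as its two endpoints. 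Since the endpoints of $e_{b|(n+1)}=e_{b|n}\beta$ are the images of the endpoints of $\beta\in E(R)$ under the attaching isomorphism, which sends the boundary vertices of $R$ to $\{p,q\}$ and interior vertices to new vertices, $\beta$ must join the initial and terminal vertices of $R$ — contradicting Definition~\ref{def:expanding}(2). Thus $a\sim c$.

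For the topology: $\Omega=E(G_0)\times E(R)^{\infty}$ is compact (Tychonoff), and since $E(R)$ has at least two elements $\Omega$ is homeomorphic to the Cantor set; in particular $X=\Omega/{\sim}$ is compact as a continuous image of a compact space. Moreover $\sim=\bigcap_{n\geq 0}C_n$, where $C_n=\{(a,b):e_{a|n}\text{ and }e_{b|n}\text{ share a vertex}\}$ depends only on the first $n{+}1$ coordinates of each of $a$ and $b$ and is therefore a finite union of products of cylinder sets, hence clopen; so $\sim$ is closed in $\Omega\times\Omega$. A closed equivalence relation on a compact Hausdorff space has compact Hausdorff quotient (the quotient map is then closed, and one separates points after saturating, using normality of $\Omega$), so $X$ is compact Hausdorff; and being a Hausdorff continuous image of the Cantor set it is metrizable — equivalently, the quotient map, being closed with second‑countable domain, has second‑countable image, and a compact Hausdorff second‑countable space is metrizable by Urysohn.

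The main obstacle is transitivity — specifically, isolating the no‑bypass lemma and recognizing that the hypothesis ``the initial and terminal vertices of $R$ are not joined by an edge'' is exactly what rules out the configuration in which $e_{a|n}$ and $e_{c|n}$ sit at the two opposite ends of $e_{b|n}$. Everything afterwards is formal: clopenness of the $C_n$ makes $\sim$ closed for free, and passing from a closed equivalence relation on the Cantor set to a compact metrizable quotient is standard point‑set topology.
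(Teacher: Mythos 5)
Your proof is correct, but it takes a genuinely different route from the paper in both halves. For transitivity, the paper does not argue directly: it first characterizes the gluing relation via gluing vertices (Proposition~\ref{prop:GluingRelation}), showing that two distinct equivalent sequences must both represent a common gluing vertex and that a sequence can represent at most one gluing vertex, with transitivity falling out as a corollary; your ``no-bypass'' lemma instead verifies transitivity by a direct configuration analysis, and you correctly isolate that the only hypothesis doing real work is Definition~\ref{def:expanding}(2) (no edge of $R$ joins the boundary vertices), which is exactly the hypothesis the paper uses at the analogous step (``the edges $\edge_0\cdots\edge_{n+1}$ and $\edgetwo_0\cdots\edgetwo_{n+1}$ share at most one vertex''). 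For the topology, the paper proves Hausdorffness by exhibiting explicit separating open sets --- interiors of cells and vertex stars $\mathrm{St}_n(v)$, via Lemma~\ref{lem:celltop} and the three-case argument of Theorem~\ref{thm:hausdorff} --- and then cites Bourbaki; you instead observe that $\sim$ is an intersection of clopen sets $C_n$, hence closed in $\Omega\times\Omega$, and invoke the standard theorem that a closed equivalence relation on a compact Hausdorff space has compact Hausdorff quotient, plus metrizability of Hausdorff continuous images of compact metrizable spaces. Your route is shorter and more self-contained for this one proposition; the paper's route is longer but produces auxiliary output it needs later (the identification of nontrivial equivalence classes with gluing vertices, and the open cell interiors and stars used in Subsection~\ref{subsec:top} and beyond). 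One small caveat: your parenthetical ``a closed map with second-countable domain has second-countable image'' is false for general closed maps (collapse $\mathbb{Z}\subset\mathbb{R}$ to a point); here it holds because the fibers are closed in a compact space, hence the quotient map is perfect --- or you can simply cite that a Hausdorff continuous image of a compact metrizable space is metrizable. This does not affect the correctness of your conclusion.
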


\begin{convention} From this point forward, all replacement systems are assumed to be expanding.
\end{convention}

\begin{example}[Gluing Relation for the Basilica] For the basilica replacement system given in Example~\ref{ex:BasilicaReplacement}, the symbol space is the infinite product
\[
\Omega = \{\mathsf{T},\mathsf{B},\mathsf{L},\mathsf{R}\} \times \{\mathsf{0},\mathsf{1},\mathsf{2}\}^\infty.
\]
It is not hard to work out the gluing relation on $\Omega$.  In particular:
\begin{enumerate}
\item Let $v$ denote the left vertex of the base graph~$G_0$.  Then any edge of the form $\mathsf{L0}^n$, $\mathsf{L2}^n$, $\mathsf{B0}^n$, or $\mathsf{T2}^n$ in $G_n$ is incident on~$v$.  It follows that the four points $\mathsf{L}\overline{\mathsf{0}}$, $\mathsf{L}\overline{\mathsf{2}}$, $\mathsf{B}\overline{\mathsf{0}}$, and $\mathsf{T}\overline{\mathsf{2}}$ in $\Omega$ are all equivalent under the gluing relation, where overline denotes repetition.
\item Similarly, the points $\mathsf{R}\overline{\mathsf{0}}$, $\mathsf{R}\overline{\mathsf{2}}$, $\mathsf{T}\overline{\mathsf{0}}$, and $\mathsf{B}\overline{\mathsf{2}}$ in $\Omega$ are all equivalent, corresponding to the right vertex of~$G_0$.
\item More generally, if $\edge_0\cdots\edge_n\textsf{v}$ is any vertex of~$G_{n+1}$, where $\textsf{v}$ denotes the interior vertex of~$R$, then
 \[
 \edge_0\cdots\edge_n \mathsf{0}\overline{\mathsf{2}} \;\;\sim\;\; \edge_0\cdots\edge_n \mathsf{1}\overline{\mathsf{0}} \;\;\sim\;\; \edge_0\cdots\edge_n \mathsf{1}\overline{\mathsf{2}} \;\;\sim\;\; \edge_0\cdots\edge_n \mathsf{2}\overline{\mathsf{0}}.
 \]
\end{enumerate}
All of the nontrivial equivalences under~$\sim$ are of one of the three forms listed above.  In particular, every point of $\Omega$ that does not end in an infinite sequence of $\mathsf{0}$'s or $\mathsf{2}$'s is a one-point equivalence class.  The quotient $X = \Omega/{\sim}$ is homeomorphic to the basilica Julia set shown in Figure~\subref*{subfig:Basilica}.  This can be proven using the standard description of the basilica as a quotient of the circle via the Thurston invariant lamination (see~\cite{Th}).
\end{example}

It is true in general that the nontrivial equivalence classes under the gluing relation correspond to vertices.  First, observe that the vertex sets for the graphs in the full expansion sequence $\{G_n\}$ form a nested chain
\[
V(G_0) \;\subset\; V(G_1) \;\subset\; V(G_2) \;\subset\; \cdots.
\]
We shall refer to elements of the union $\bigcup_{n=0}^\infty V(G_n)$ as \newword{gluing vertices} for the replacement system~$\R$. A point $\edge_0\edge_1\edge_2 \cdots \in \Omega$ \newword{represents} a gluing vertex $v$ if the edge $\edge_0\cdots\edge_n$ in $G_n$ is incident on $v$ for all sufficiently large~$n$.

We will prove in Proposition~\ref{prop:GluingRelation} that two distinct points in $\Omega$ are identified under the gluing relation if and only if they represent the same gluing vertex.  Moreover, the function that maps each gluing vertex to the corresponding point in $X$ is an injection.  From now on, we will identify each gluing vertex with its image in~$X$.  Thus $\edge_0 \edge_1 \edge_2\cdots$ represents a gluing vertex $v$ if and only if $\edge_0 \edge_1 \edge_2\cdots$ maps to $v$ under the quotient map $\Omega \to X$.

We now introduce our second main example of a replacement system and the corresponding limit space.

\begin{figure}
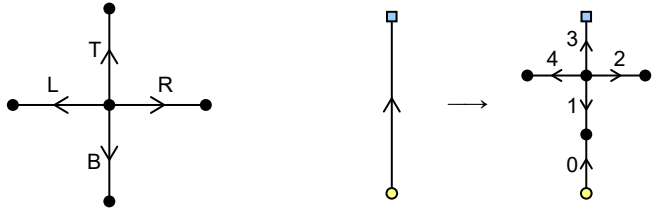

\centering
\vgraphics{VicsekBaseGraph}\qquad\qquad\qquad\vgraphics{EdgeReplacement1Tall} \quad\longvarrow\quad \vgraphics{VicsekEdgeReplacement}
\caption{The Vicsek replacement system.}
\label{fig:VicsekReplacement}
\end{figure}
\begin{figure}
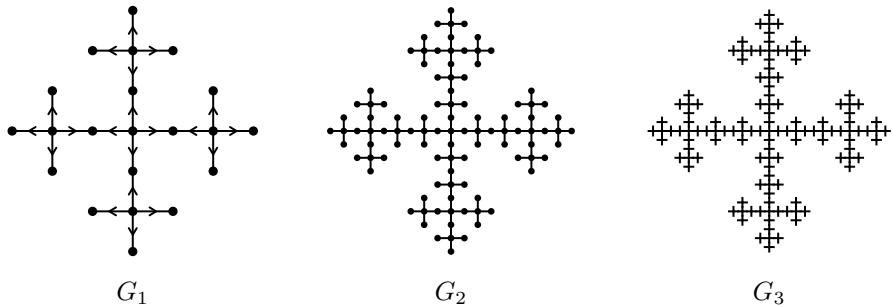

\centering
$\underset{\textstyle G_1\rule{0pt}{12pt}}{\vgraphics{VicsekStage1}}
\qquad
\underset{\textstyle G_2\rule{0pt}{12pt}}{\vgraphics{VicsekStage2}}
\qquad
\underset{\textstyle G_3\rule{0pt}{12pt}}{\vgraphics{VicsekStage3}}$
\caption{Three graphs from the full expansion sequence for the Vicsek replacement system. To improve readability, we have removed the arrowheads from $G_2$ and~$G_3$, and we have removed the vertex dots from~$G_3$.}
\label{fig:VicsekESequ}
\end{figure}
\begin{example}[The Vicsek Fractal]
\label{ex:Vicsek}Consider the replacement system shown in Figure~\ref{fig:VicsekReplacement}.  The first few graphs in the full expansion sequence for this replacement system are shown in Figure~\ref{fig:VicsekESequ}.

%
The symbol space for this fractal is $\Omega = \{\mathsf{T},\mathsf{L},\mathsf{R},\mathsf{B}\}\times\{\mathsf{0},\mathsf{1},\mathsf{2},\mathsf{3},\mathsf{4}\}^\infty$. The limit space is the compact Hausdorff space shown in Figure~\subref*{subfig:Vicsek}, which is known as the \newword{Vicsek fractal}.  The gluing relation $\sim$ on $\Omega$ is given by
\[
e\mathsf{0}\overline{\mathsf{3}} \;\sim\; e\mathsf{1}\overline{\mathsf{3}} \qquad\text{and}\qquad e \mathsf{1}\overline{\mathsf{0}} \;\sim\; e \mathsf{2}\overline{\mathsf{0}} \;\sim\; e \mathsf{3}\overline{\mathsf{0}} \;\sim\; e \mathsf{4}\overline{\mathsf{0}}
\]
for every edge $e = \edge_0\cdots\edge_n$ in $G_n$, and also $\mathsf{T}\overline{\mathsf{0}} \sim \mathsf{L}\overline{\mathsf{0}} \sim \mathsf{R}\overline{\mathsf{0}} \sim \mathsf{B}\overline{\mathsf{0}}$.
\end{example}

\subsection{Cells and Rearrangements}
\label{subsec:cellrearrange}

Let $\R=(G_0,e\to R)$ be an expanding replacement system, let $\{G_n\}$ be the corresponding full expansion sequence, and let $X=\Omega/{\sim}$ be the resulting limit space.

\begin{definition}If $e =\edge_0\cdots\edge_n$ is an edge of $G_n$, let $\Omega(e)$ denote the set of all points in $\Omega$ that have $\edge_0\cdots\edge_n$ as a prefix.  The \newword{cell} $C(e)$ is the image of $\Omega(e)$ in the limit space~$X$.
\end{definition}

\begin{figure}
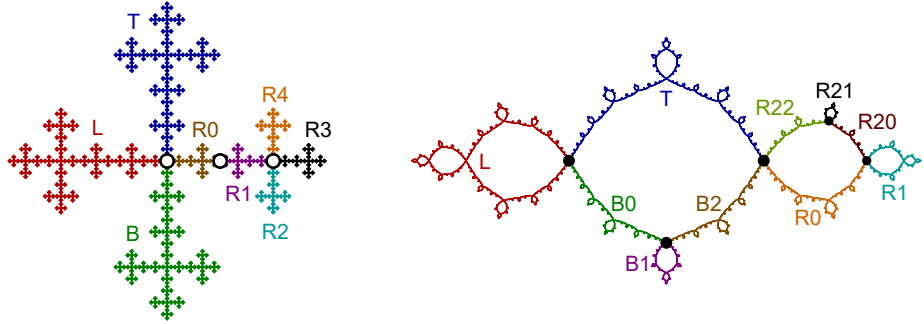

\vgraphics{Vicsek2Labeled}
\qquad\vgraphics{BasilicaCells}
\caption{Cells for the Vicsek fractal and the basilica.   Each cell $C(\edge_0\cdots\edge_n)$ is labeled by its address~$\edge_0\cdots\edge_n$.}
\label{fig:Cells}
\end{figure}
For example, Figure~\ref{fig:Cells} shows several cells in the Vicsek fractal and in the basilica.

Each cell $C(e)$ has either one or two \newword{boundary points}, namely the gluing vertices that are the endpoints of the edge~$e$.  The complement of the boundary points is the \newword{interior} of the cell, which may or may not be the same as the topological interior.  Note that each cell $C(e)$ is compact, being the image of the compact set~$\Omega(e)$.

The cells of $X$ have the structure of a rooted tree under inclusion, corresponding to the tree of edges under the prefix relation.  Specifically, $C(e) \supseteq C(e')$ whenever $e$ is a prefix of~$e'$, and $C(e)$ and $C(e')$ have disjoint interiors if neither $e$ nor $e'$ is a prefix of the other (see Proposition~\ref{prop:contain}.)  The root of the tree of cells is the whole space~$X$, which corresponds to the empty sequence in the tree of edges.  Note that $X$ is not itself a cell unless the base graph has only one edge.

There is a \newword{canonical homeomorphism} between any two cells of the same type.  More precisely, let $C(e)$ and $C(e')$ be cells of~$X$, where $e$ and $e'$ are either both loops or both not loops, and define a homeomorphism $\Phi\colon \Omega(e)\to\Omega(e')$ by
\[
\Phi(e\zeta_1 \zeta_2 \cdots) \,=\, e' \zeta_1 \zeta_2\cdots
\]
for any edges $\zeta_1,\zeta_2,\ldots$ in~$R$.  Then $\Phi$ descends to a canonical homeomorphism $\phi\colon C(e)\to C(e')$.   Note that the set of canonical homeomorphisms is closed under inverses, composition, and restriction to subcells.\filbreak  

\begin{definition}\label{def:Rearrangement}\quad
\begin{enumerate}
\item A \newword{cellular partition} of $X$ is a cover of $X$ by finitely many cells whose interiors are disjoint.
\item A homeomorphism $f\colon X\to X$ is called a \newword{rearrangement} of $X$ if there exists a cellular partition $\mathcal{P}$ of $X$ such that $f$ restricts to a canonical homeomorphism on each cell of $\mathcal{P}$.
\end{enumerate}
\end{definition}

\begin{figure}
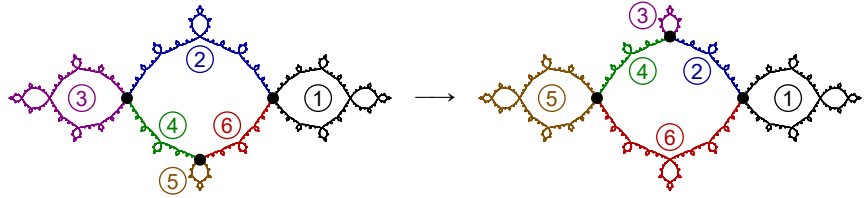

\centering
\vgraphics{BasilicaBetaDomain}\;\;\longvarrow\;\;\vgraphics{BasilicaBetaRange}
\caption{A rearrangement of the basilica.  Each of the numbered cells on the left maps to the corresponding cell on the right via a canonical homeomorphism.}
\label{Fig:BasilicaRearrangement}
\end{figure}
\begin{figure}
\centering
\vgraphics{Vicsek1}\qquad\longvarrow\qquad\vgraphics{Vicsek2}
\caption{A rearrangement of the Vicsek fractal.  Each numbered cell on the left maps to the corresponding cell on the right via a canonical homeomorphism.}
\label{Fig:VicsekRearrangement}
\end{figure}
For example, Figure~\ref{Fig:BasilicaRearrangement} shows a rearrangement of the basilica, and Figure~\ref{Fig:VicsekRearrangement} shows a rearrangement of the Vicsek fractal.

\begin{remark}\label{rem:structure}Though we use the phrase ``rearrangements of $X$'', the topological structure of $X$ is not sufficient in general to determine whether a homeomorphism is a rearrangement.  Instead, one must take into account the additional structure that $X$ inherits from its construction as a limit space. To be precise, a limit space should be thought of as a triple $(X,\mathcal{C},\alpha)$, where $X$ is the space itself, $\mathcal{C}$ is the family of cells in~$X$, and $\alpha$ is the function that assigns a finite address $\edge_0\edge_1\cdots\edge_n$ to each cell.
\end{remark}

\begin{proposition}The rearrangements of $X$ form a group under composition.
\end{proposition}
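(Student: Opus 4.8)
The plan is to verify the three group axioms—closure under composition, existence of an identity, and closure under inverses—using the combinatorial structure of cellular partitions and canonical homeomorphisms. The identity is immediate: the partition of $X$ into the cells $C(e)$ for $e$ an edge of $G_0$ (or even just $\{X\}$ itself, viewed as a degenerate partition if one allows it, but the $G_0$-partition is cleaner) realizes $\mathrm{id}_X$ as a canonical homeomorphism on each piece, so $\mathrm{id}_X$ is a rearrangement. Inverses are nearly as easy: if $f$ restricts to the canonical homeomorphism $C(e)\to C(e')$ on each cell of a partition $\mathcal{P}$, then the images $f(C(e)) = C(e')$ form a cover of $X$ by cells, these cells have disjoint interiors because $f$ is a homeomorphism (so it carries the disjoint interiors of $\mathcal{P}$ to disjoint interiors), and $f^{-1}$ restricts on each $C(e')$ to the inverse of a canonical homeomorphism, which is again canonical by the remark in the text that canonical homeomorphisms are closed under inverses. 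Hence $f^{-1}$ is a rearrangement.

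The substantive step is closure under composition. Suppose $f$ is cellular with respect to a partition $\mathcal{P}$ and $g$ is cellular with respect to a partition $\mathcal{Q}$; I want a common refinement that works for $g\circ f$. The natural approach is to pull back $\mathcal{Q}$ through $f$: for each cell $D\in\mathcal{Q}$ and each cell $C\in\mathcal{P}$, the preimage $f^{-1}(D)\cap C$ should again be a union of cells, because $f$ restricted to $C$ is a canonical homeomorphism $C \to f(C)$, and canonical homeomorphisms carry subcells to subcells (the text notes the canonical maps are closed under restriction to subcells, and the containment structure of cells is the prefix tree by Proposition~\ref{prop:contain}). So one refines $\mathcal{P}$ to a partition $\mathcal{P}'$ fine enough that $f$ maps each cell of $\mathcal{P}'$ into a single cell of $\mathcal{Q}$; concretely, if $f$ sends $C\in\mathcal{P}$ to $C'$ via a canonical homeomorphism, subdivide $C$ according to the pullback under that homeomorphism of the cells of $\mathcal{Q}$ contained in $C'$, using that $\mathcal{Q}$ being a cellular partition means $C'$ is itself tiled by cells of $\mathcal{Q}$ (or contained in one). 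Then on each cell of $\mathcal{P}'$, the map $f$ is canonical landing inside one cell of $\mathcal{Q}$, on which $g$ is canonical, and the composite of two canonical homeomorphisms is canonical (again by the closure remark), so $g\circ f$ restricts to a canonical homeomorphism on each cell of $\mathcal{P}'$. It remains to check $\mathcal{P}'$ is genuinely a cellular partition—finitely many cells with disjoint interiors covering $X$—which follows because each $C\in\mathcal{P}$ is subdivided into finitely many cells with disjoint interiors (a refinement within the prefix tree), and the interiors across different $C$'s were already disjoint.

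The main obstacle I anticipate is the bookkeeping around exactly how $\mathcal{Q}$ tiles a given cell $C'$ of $f(\mathcal{P})$: $\mathcal{Q}$ is a partition of all of $X$, not of $C'$, so I need the fact that the cells of $\mathcal{Q}$ meeting the interior of $C'$ are either contained in $C'$ or contain $C'$, and that in either case one can extract a finite cellular partition of $C'$ into cells—this is where Proposition~\ref{prop:contain} about the prefix-tree structure of cells (disjoint interiors when incomparable, nesting when comparable) does the real work, together with compactness of $C'$ to keep everything finite. A secondary subtlety is loops versus non-loops: canonical homeomorphisms are only defined between cells $C(e),C(e')$ with $e,e'$ both loops or both non-loops, so one must check the refinement respects this—but this is automatic, since $f$ being a rearrangement already matches loop-cells to loop-cells, and restriction to subcells preserves the loop/non-loop type. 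Once these points are in hand, the verification of the three axioms is routine, and I would present composition in full detail and dispatch the identity and inverses in a sentence each.
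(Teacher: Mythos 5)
Your proof is correct, and your identity and inverse steps coincide with the paper's (the paper likewise uses $f(\mathcal{P})$ as the partition for $f^{-1}$). For closure under composition, however, you take a genuinely different route. You compose $g\circ f$ and refine the domain partition directly: you pull back the partition $\mathcal{Q}$ for $g$ through $f$, cell by cell, using the facts that canonical homeomorphisms restrict to canonical homeomorphisms on subcells and that, by Proposition~\ref{prop:contain}, a cell of $\mathcal{Q}$ whose interior meets an image cell $C'$ either contains $C'$ or is one of finitely many subcells tiling $C'$. The paper instead works at the intermediate space: it takes a partition $\mathcal{P}_1$ on which $f$ is canonical and a partition $\mathcal{P}_2$ on which $g^{-1}$ (not $g$) is canonical, forms their least common refinement $\mathcal{Q}$ inside the one space, observes that both $f$ and $g^{-1}$ remain canonical on every cell of this refinement, and concludes that $f\circ g$ is canonical on each cell of $g^{-1}(\mathcal{Q})$. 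The paper's trick buys a shorter argument, since refining two cellular partitions of the same space needs only the nesting/disjoint-interior dichotomy and never transports a partition through a canonical homeomorphism; your pullback construction is conceptually more direct but forces exactly the tiling bookkeeping you flag (that the cells of $\mathcal{Q}$ contained in $C'$ really cover $C'$, with finiteness from compactness) --- although the paper's unproved assertion that the least common refinement is again a cellular partition quietly relies on essentially the same fact, so the gap in rigor is comparable. Your handling of the loop versus non-loop caveat is also correct and is not even mentioned in the paper's proof.
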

\begin{proof}Clearly the identity homeomorphism is a rearrangement.  For inverses, suppose that $f$ is a rearrangement of~$X$, and let $\mathcal{P}$ be a cellular partition of $X$ such that $f$ restricts to a canonical homeomorphism on each cell of~$\mathcal{P}$.  Then the image
\[
f(\mathcal{P}) \,=\, \{f(C) \mid C\in\mathcal{P}\}
\]
is also a cellular partition of~$X$, and $f^{-1}$ restricts to a canonical homeomorphism on each cell of~$f(\mathcal{P})$, so $f^{-1}$ is a rearrangement.

Finally, suppose that $f$ and $g$ are rearrangements of~$X$.  Let $\mathcal{P}_1$ and $\mathcal{P}_2$ be cellular partitions of $X$ so that $f$ restricts to a canonical homeomorphism on each cell of~$\mathcal{P}_1$, and $g^{-1}$ restricts to a canonical homeomorphism on each cell of~$\mathcal{P}_2$.  Let $\mathcal{Q}$ be the least common refinement of $\mathcal{P}_1$ and $\mathcal{P}_2$, i.e.~the set of all cells in $\mathcal{P}_1\cup\mathcal{P}_2$ that are not properly contained in other cells of~$\mathcal{P}_1\cup\mathcal{P}_2$. Then both $f$ and $g^{-1}$ restrict to a canonical homeomorphism on each cell of~$\mathcal{Q}$, so $f\circ g$ restricts to a canonical homeomorphism on each cell of~$g^{-1}(\mathcal{Q})$.
\end{proof}

We refer to the group of all rearrangements of the limit space $X$ as the \newword{rearrangement group} of~$X$. Subsection~\ref{subsec:examples} discusses many examples of rearrangement groups. The interested reader may wish to skip ahead to that subsection to see some of these examples before continuing to Subsection \ref{subsec:graphpair}.

\subsection{Graph Pair Diagrams}
\label{subsec:graphpair}

In this subsection we introduce graph pair diagrams, which provide a simple graphical representation of rearrangements and their action on the limit space.

Given a replacement system $\R = (G_0,e\to R)$, there is a one-to-one correspondence between expansions of the base graph~$G_0$ and cellular partitions of the corresponding limit space.  In particular, given an expansion $E$ of $G_0$, the edges $e_1,\ldots,e_n$ of $E$ define a cellular partition $\{C(e_1),\ldots,C(e_n)\}$ of the limit space~$X$, and every cellular partition has this form.

\begin{figure}[b]
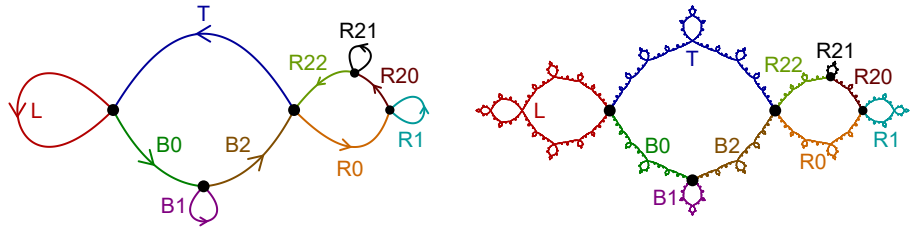

\centering
\vgraphics{StructureGraphBasilica}
\hfill\vgraphics{BasilicaCellsSmall}
\caption{An expansion of the base graph for the basilica and the corresponding cellular partition.}
\label{fig:StructureGraphs}
\end{figure}
For example, Figure~\ref{fig:StructureGraphs} shows the cellular partition of the basilica corresponding to a certain expansion of the base graph.  Note that the edges of the expansions intersect in precisely the same way as the cells of the partition, with the vertices of the expansion corresponding to the boundary vertices of cells of the partition.

If $f\colon X\to X$ is a rearrangement that maps the cells of one cellular partition canonically to the cells of another, then $f$ must induce an isomorphism between the corresponding expansions.  This prompts the following definition.

\begin{definition}Let $f\colon X\to X$ be a rearrangement.  A \newword{graph pair diagram} for $f$ is a triple $(E,E',\varphi)$, where $E$ and $E'$ are expansions of $G_0$ and $\varphi\colon E\to E'$ is an isomorphism, such that $f$ maps $C(e)$ canonically to $C(\varphi(e))$ for each edge $e$ in~$E$.
\end{definition}

\begin{figure}
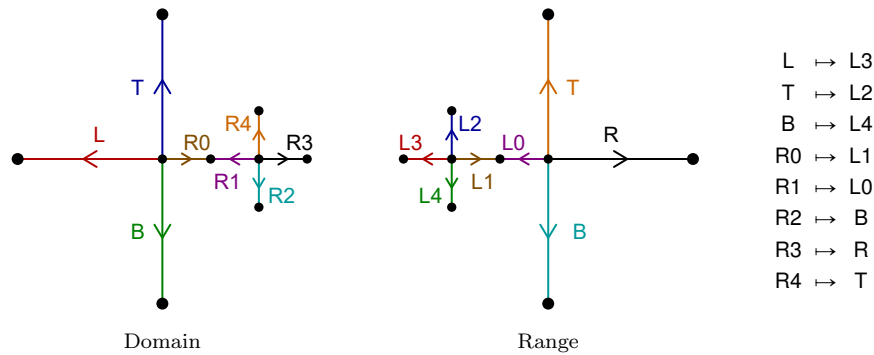

\centering
\footnotesize
$
\renewcommand{\arraystretch}{1.25}
\underset{\textstyle\text{Domain\rule{0pt}{10pt}}}{\vgraphics{StructureGraphVicsek}}
\qquad\quad\underset{\textstyle\text{Range\rule{0pt}{10pt}}}{\vgraphics{StructureGraphVicsek2}}
\qquad\quad
\scriptstyle\raisebox{-2ex}{$\begin{array}{@{}c@{\;\;\mapsto\;\;}c@{}}
\textsf{L} & \textsf{L3} \\
\textsf{T} & \textsf{L2} \\
\textsf{B} & \textsf{L4} \\
\textsf{R0} & \textsf{L1} \\
\textsf{R1} & \textsf{L0} \\
\textsf{R2} & \textsf{B} \\
\textsf{R3} & \textsf{R} \\
\textsf{R4} & \textsf{T}
\end{array}$}$
\caption{A graph pair diagram for a rearrangement of the Vicsek fractal.  The corresponding rearrangement is shown in Figure~\ref{Fig:VicsekRearrangement}.}
\label{fig:GraphPairDiagram}
\end{figure}
\begin{example}Let $f$ be the rearrangement of the Vicsek fractal shown in Figure~\ref{Fig:VicsekRearrangement}.  One possible graph pair diagram for $f$ is shown in Figure~\ref{fig:GraphPairDiagram}.  In this picture, the domain and range graphs are drawn, and the isomorphism is defined by the table of mappings shown on the right.  (For clarity, we have also colored corresponding edges in the two graphs using the same colors.)

\begin{figure}[tb]
\centering
\includegraphics{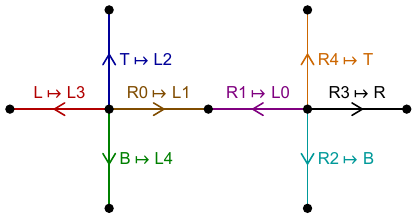}
\caption{Another convention for drawing the graph pair diagram shown in Figure~\ref{fig:GraphPairDiagram}.}
\label{fig:GraphPairDiagram2}
\end{figure}
Figure~\ref{fig:GraphPairDiagram2} shows another drawing of this same graph pair diagram.  Instead of showing separate copies of the isomorphic graphs $E$ and~$E'$, this picture shows only a single graph with two sets of labels.  This convention for drawing graph pair diagrams is more compact, but conveys less geometric intuition for how the corresponding rearrangement acts on the limit space.
\end{example}

\begin{figure}[tb]
\centering
\includegraphics{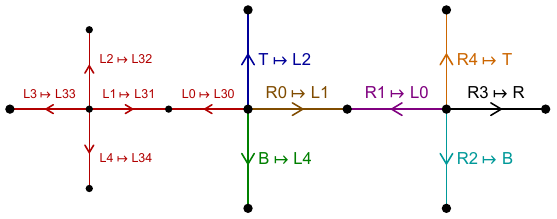}
\caption{An unreduced graph pair diagram for the rearrangement from Figure~\ref{Fig:VicsekRearrangement}.}
\label{fig:GraphPairDiagramUnreduced}
\end{figure}
The graph pair diagram for a rearrangement is not unique.  For example, Figure~\ref{fig:GraphPairDiagramUnreduced} shows a different graph pair diagram for the rearrangement from the last example.  In this diagram, the $\mathsf{L}$ edge has been expanded in the domain graph, and the corresponding $\mathsf{L3}$ edge has been expanded in the range graph.  Thus the five leftmost edges in Figure~\ref{fig:GraphPairDiagramUnreduced} all correspond to portions of the cell described by the leftmost edge of Figure~\ref{fig:GraphPairDiagram2}.

In general, if $(E,E',\varphi)$ is a graph pair diagram and $e$ is an edge of~$E$, then $(E \exp e, E' \exp \varphi(e), \varphi')$ is another graph pair diagram for the same rearrangement, where $\varphi'\colon E\exp e \to E'\exp \varphi(e)$ is the isomorphism that agrees with $\varphi$ on $E-\{e\}$ and maps $e\edge$ to $\varphi(e)\edge$ for every edge $\edge$ in~$R$.  We say that a graph pair diagram is \newword{reduced} if it cannot be obtained from a smaller graph pair diagram in this fashion. For example, the graph pair diagram in Figure~\ref{fig:GraphPairDiagram2} is reduced, but the one in Figure~\ref{fig:GraphPairDiagramUnreduced} is not.

\begin{proposition}\label{prop:UniqueReduced} Every rearrangement has a unique reduced graph pair diagram.
\end{proposition}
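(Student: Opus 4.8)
The plan is to establish existence and uniqueness separately, with uniqueness being the crux. For existence, start from any graph pair diagram $(E,E',\varphi)$ for $f$ — one exists by the discussion preceding the definition, since $f$ maps some cellular partition canonically onto another, and cellular partitions correspond to expansions of $G_0$. Call a graph pair diagram \emph{reducible at $e$} if $E$ was obtained by expanding an edge $e_0$ to the edges $e_0\edge$ ($\edge\in E(R)$), $E'$ was obtained by expanding $\varphi$ of the corresponding edge, and $\varphi$ carries the block $\{e_0\edge\}$ to the block $\{\varphi(e_0)\edge\}$ respecting the $R$-labels; reducing means replacing this diagram by the smaller one with $e_0$ unexpanded. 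Each reduction strictly decreases the number of edges of $E$, so after finitely many reductions we reach a reduced graph pair diagram for $f$. This takes care of existence.

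For uniqueness, suppose $(E_1,E_1',\varphi_1)$ and $(E_2,E_2',\varphi_2)$ are both reduced graph pair diagrams for the same rearrangement $f$. First I would pass to a common refinement: let $E$ be the least common expansion of $E_1$ and $E_2$ (the expansion whose edge set is the set of minimal elements, under the prefix order, of the union of the edge sets — the same ``least common refinement'' construction used in the group proof above, now phrased for expansions). Then $(E,f(E),\psi)$ is a graph pair diagram for $f$ obtained from $(E_1,E_1',\varphi_1)$ by a sequence of expansions, and also from $(E_2,E_2',\varphi_2)$ by a sequence of expansions — here I use that $f$ maps the cellular partition given by $E$ canonically onto the cellular partition $f(E)$, which is again a cellular partition hence corresponds to an expansion $f(E)$ of $G_0$, and that the induced edge bijection $\psi$ is forced. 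The key claim then is: \emph{if a reduced graph pair diagram $(E_1,E_1',\varphi_1)$ is obtained from $(E,f(E),\psi)$ by reductions, the result is independent of the order of reductions and of which common refinement was chosen} — equivalently, any two reduced diagrams for $f$ have the same domain expansion $E_1$, hence (since $\varphi_1$ and the range are then determined by $f$) are equal.

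To prove that claim I would argue by a confluence/diamond argument on the reduction relation. Show that if a single diagram admits two distinct reductions, at edges $e_0$ and $e_0'$, then either the two reductions involve disjoint sets of edges (so they commute — perform both, in either order, reaching the same diagram) or they involve the same expanded block (so they are literally the same reduction). The only subtlety is that a reduction at $e_0$ might destroy the reducibility at $e_0'$: but that can only happen if the block expanded at $e_0$ overlaps the block at $e_0'$, and since distinct expansion blocks in an expansion of $G_0$ are either nested or disjoint while reduction blocks are the images of a \emph{single} edge-expansion (so maximal and unnested), overlap forces equality. Combined with the fact that reduction strictly decreases $|E(E_1)|$ (so the relation is terminating), local confluence plus Newman's lemma gives a unique normal form starting from any diagram. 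Since $(E,f(E),\psi)$ reduces to both $(E_1,E_1',\varphi_1)$ and $(E_2,E_2',\varphi_2)$, these coincide.

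The main obstacle is the confluence step: one must be careful that a reduction does not merely \emph{preserve} another reduction site but genuinely leaves the rest of the diagram untouched, and that ``reducibility at $e_0'$'' is a condition on $\varphi$ respecting $R$-labels that survives an unrelated reduction. I expect this to come down to the combinatorial observation that the reduction sites of a graph pair diagram correspond to maximal ``full blocks'' $\{e_0\edge : \edge\in E(R)\}$ present in $E$ and mapped block-wise by $\varphi$, and that two distinct such maximal full blocks are vertex-disjoint as subgraphs of $E$, so reductions at them genuinely commute. Everything else — termination, the passage to a common refinement, and the deduction that equal domain expansions force equal diagrams — is routine given the prefix-tree structure of cells established in Proposition~\ref{prop:contain} and the definition of canonical homeomorphism.
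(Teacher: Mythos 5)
Your proposal is correct in outline but follows a genuinely different route from the paper's. The paper's proof is essentially a three-line canonical-form argument: say $f$ is \emph{regular} on a cell if it restricts there to a canonical homeomorphism; then an expansion $E$ is the domain of a graph pair diagram for $f$ if and only if $f$ is regular on $C(e)$ for every edge $e$ of $E$, and a diagram is reduced if and only if its domain edges are precisely the \emph{maximal} cells on which $f$ is regular. Since the set of maximal regular cells is determined by $f$ alone, existence and uniqueness follow at once. You instead obtain uniqueness abstractly, via termination of the reduction relation plus local confluence and Newman's lemma, together with the common-refinement step showing that any two diagrams for $f$ are reducts of a single common diagram. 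Your route is longer, but it is sound and it makes explicit something the paper's ``It follows'' leaves implicit, namely that a diagram whose domain edges are not all maximal regular cells genuinely admits a reduction move; the paper's approach buys brevity and a characterization of the reduced diagram that is used again (implicitly) when reduced diagrams reappear for the groupoid of rearrangements. Two small repairs to your write-up: in forming the common refinement you should take the prefix-\emph{maximal} elements of the union of the two edge sets (equivalently the inclusion-minimal cells); the prefix-minimal ones give the common coarsening, which is coarser than $E_1$ and $E_2$ and does not reduce to them. And distinct reduction blocks need not be vertex-disjoint --- they may share boundary vertices of the contracted edges; what makes the two contractions commute is that the blocks are edge-disjoint (forced by the prefix/antichain structure of an expansion) and that the interior vertices of a block are incident only on edges having that block's parent edge as a prefix.
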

\begin{proof}Let $f\colon X\to X$ be a rearrangement.  We say that $f$ is \textit{regular} on a cell $C$ if $f$ restricts to a canonical homeomorphism on~$C$.  Then an expansion $E$ is the domain graph of a graph pair diagram for $f$ if and only if $f$ is regular on $C(e)$ for each edge $e$ of~$E$.  It follows that a graph pair diagram $(E,E',\varphi)$ is reduced if and only if the edges of $E$ correspond precisely to the maximal cells on which $f$ is regular.
\end{proof}

\begin{remark}Graph pair diagrams can be thought of as an analogue of the tree pair diagrams for elements of $F$, $T$, and $V$~(see~\cite{CFP}).  In particular, recall that the cells in $X$ have the structure of an infinite tree under inclusion, where the root is not itself a cell.  There is a one-to-one correspondence between
\begin{itemize}
\item Cellular partitions of $X$,
\item Expansions of $G_0$, and
\item Finite rooted subtrees of the tree of cells.
\end{itemize}
In particular, every rearrangement can be described by a pair of finite rooted trees.  For Thompson's groups $F$, $T$, and $V$, this gives the tree pair diagram for a rearrangement.  However, tree pairs are not as useful for other rearrangement groups, since it is not possible to see from the respective trees whether the corresponding expansions are isomorphic.
\end{remark}

\begin{remark}It is possible to compose two rearrangements directly from the graph pair diagram.  Specifically, let $f_1$ and $f_2$ be rearrangements, and let $(E_1,E_1',\varphi_1)$ and $(E_2,E_2',\varphi_2)$ be a corresponding pair of graph pair diagrams.  By expanding if necessary, we may assume that $E_1' = E_2$.  Then the composition $f_2\circ f_1$ is the rearrangement whose graph pair diagram is $(E_1,E_2',\varphi_2\circ\varphi)$.
\end{remark}

\subsection{The Topology of the Limit Space}
\label{subsec:top}

In this subsection, we prove several important technical statements regarding the gluing relation, the limit space, cells, and canonical homeomorphisms, most of which were stated without proof in Section~\ref{sec:rearrangement}.

Let $\R = (G_0,e\to R)$ be a replacement system, which we assume to be expanding (see Definition~\ref{def:expanding}).  Let $\{G_n\}$ be the corresponding full expansion sequence, let $\Omega$ be the resulting symbol space, let $\sim$ be the gluing relation on~$\Omega$, and let $X = \Omega/\sim$ be the resulting limit space.  We begin by characterizing the gluing relation~$\sim$ in terms of the gluing vertices.

\begin{proposition}\label{prop:GluingRelation}\quad
\begin{enumerate}
\item Each gluing vertex is represented by at least one point in\/~$\Omega$.
\item Each point in\/ $\Omega$ represents at most one gluing vertex.
\item Two points in\/ $\Omega$ are equivalent under the gluing relation if and only if they represent the same gluing vertex.
\item The gluing relation~$\sim$ is an equivalence relation.
\end{enumerate}
\end{proposition}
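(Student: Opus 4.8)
The plan is to establish (1)--(4) in that order, the main tool being a \emph{monotonicity lemma}: a vertex shared by two edges at a deep stage of the expansion is already shared by the corresponding edges at every earlier stage. For (1), first check by induction that no $G_n$ has an isolated vertex: an interior vertex $\edge\nu$ of a pasted-in copy of $R$ is incident on the edge $\edge\zeta$ for any edge $\zeta$ of $R$ at $\nu$, while a vertex of $G_n$ incident on an edge $\edge$ of $G_n$ stays incident on an edge of the copy of $R$ replacing $\edge$ in $G_{n+1}$, since the boundary vertices of $R$ are non-isolated (first expanding condition). Then, given a gluing vertex $v\in V(G_k)$, build a point of $\Omega$ representing it greedily: choose an edge $\edge_0\cdots\edge_k$ of $G_k$ incident on $v$; having chosen $\edge_0\cdots\edge_n$ incident on $v$ with $n\ge k$, note $v$ is a boundary vertex of the copy of $R$ that replaced $\edge_0\cdots\edge_n$ in $G_{n+1}$ and choose an edge $\edge_0\cdots\edge_n\edge_{n+1}$ of $G_{n+1}$ incident on $v$.

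For (2), suppose some point $\edge_0\edge_1\edge_2\cdots$ represented two distinct gluing vertices $v\ne w$. Then for all large $m$ both $\edge_0\cdots\edge_m$ and $\edge_0\cdots\edge_{m+1}$ would be incident on $v$ and on $w$, hence be non-loop edges with endpoints exactly $v$ and $w$. But $\edge_0\cdots\edge_{m+1}$ lies inside the copy of $R$ that replaced $\edge_0\cdots\edge_m$ when forming $G_{m+1}$, and in that copy $v$ and $w$ are the images of the boundary vertices of $R$ while every other vertex is new; so $\edge_0\cdots\edge_{m+1}$ having endpoints $v$ and $w$ forces the edge $\edge_{m+1}$ of $R$ to join the initial and terminal vertices of $R$, contradicting the second expanding condition. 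The monotonicity lemma itself follows by iterating the step from stage $j$ to stage $j-1$: if the level-$(j-1)$ edges $\edge_0\cdots\edge_{j-1}$ and $\edge_0'\cdots\edge_{j-1}'$ are distinct, they are replaced in $G_j$ by copies of $R$ meeting only in common boundary vertices, so any vertex shared by $\edge_0\cdots\edge_j$ and $\edge_0'\cdots\edge_j'$ (which lie inside these respective copies) is a common endpoint of the level-$(j-1)$ edges; while if the level-$(j-1)$ edges coincide they trivially share a vertex. Since two sequences differing at a coordinate $\le n$ have distinct level-$j$ prefixes for all $j\ge n$, this gives: sharing a vertex at level $m$ forces sharing one at level $n$ for every $n\le m$, and when moreover the level-$n$ edges are distinct, every vertex shared at level $m$ is already shared at level $n$.

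For (3) it suffices to treat distinct points $\alpha=\edge_0\edge_1\cdots$ and $\beta=\edge_0'\edge_1'\cdots$. If both represent a gluing vertex $v\in V(G_k)$, then for all large $m$ the level-$m$ edges both contain $v$ and hence share it, so by the monotonicity lemma the level-$n$ edges share a vertex for every $n$, i.e.\ $\alpha\sim\beta$. Conversely, if $\alpha\sim\beta$, let $N$ be least with $\edge_N\ne\edge_N'$; for $n\ge N$ let $W_n\subseteq V(G_N)$ be the set of vertices shared by the level-$n$ edges, which is nonempty because $\alpha\sim\beta$ and satisfies $W_{n+1}\subseteq W_n$ by the ``moreover'' clause of the lemma (the level-$n$ edges being distinct for $n\ge N$). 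A decreasing chain of nonempty finite sets has nonempty intersection, so some $w$ lies in every $W_n$; then $w$ is an endpoint of $\edge_0\cdots\edge_n$ for all $n\ge N$, so $\alpha$ represents the gluing vertex $w$, and symmetrically so does $\beta$. For (4), reflexivity holds since each edge shares its initial vertex with itself and symmetry is immediate from the form of the definition; for transitivity, assuming $\alpha,\beta,\gamma$ pairwise distinct, (3) produces a gluing vertex represented by $\alpha$ and $\beta$ and one represented by $\beta$ and $\gamma$, these agree by (2), and so $\alpha\sim\gamma$ by (3).

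The step I expect to be the crux is the $(\Rightarrow)$ direction of (3): recognizing that the right object is the nested family of finite sets $W_n$ of shared vertices, which forces a single vertex to be shared at every stage --- and this nesting is exactly what the monotonicity lemma provides. A close second is noticing that the second expanding condition is precisely what powers (2), and hence what makes the transitivity argument in (4) go through.
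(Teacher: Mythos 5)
Your proof is correct and follows essentially the same route as the paper's: (1) by inductively extending an edge incident on the vertex using the non-isolated boundary vertices of $R$, (2) from the expanding condition that the initial and terminal vertices of $R$ are not joined by an edge, (3) by descending shared vertices of deep prefixes down to a fixed finite level, and (4) as a formal consequence of (2) and (3). The only deviation is minor: where the paper finishes the forward direction of (3) by using the expanding hypothesis to pin down a single shared vertex at level $n+1$ that must persist, you instead intersect the nested nonempty finite sets $W_n$ of shared vertices, and you make explicit as a ``monotonicity lemma'' the descent of shared vertices that the paper uses implicitly.
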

\begin{proof} For (1), let $v \in V(G_n)$ be a gluing vertex.  Since $R$ is expanding, $G_n$ has no isolated vertices, so there exists an edge $\edge_0\cdots\edge_n$ in $G_n$ that is incident on~$v$.  Since neither the initial nor terminal vertex of $R$ is isolated, we can inductively choose edges $\edge_k$ in $R$ for $k>n$ so that $\edge_0\cdots\edge_k$ is incident on~$v$.  Then $\edge_0\edge_1\edge_2\cdots$ is a point in $\Omega$ that represents~$v$.

For (2), let $\edge_0\edge_1\edge_2\cdots$ be a point in~$\Omega$, and let $u$ and $v$ be distinct gluing vertices. Suppose that $\edge_0\cdots\edge_n$ is incident on both $u$ and $v$ in~$G_n$.  Since $R$ is expanding, the initial and terminal vertices of $R$ are not connected by an edge, so $\edge_0\cdots\edge_n\edge_{n+1}$ cannot be incident on both $u$ and $v$ in~$G_{n+1}$.  Hence $\edge_0\edge_1\edge_2\cdots$ cannot represent both $u$ and~$v$.

The backward direction of (3) is clear.  For the forward direction, let $\edge_0\edge_1\edge_2\cdots$ and $\edgetwo_0\edgetwo_1\edgetwo_2\cdots$ be distinct points of $\Omega$ that are equivalent under the gluing relation.  Then there exists an $n$ so that $\edge_0\cdots\edge_n$ and $\edgetwo_0\cdots\edgetwo_n$ are distinct edges in~$G_n$. Since $R$ is expanding, the edges $\edge_0\cdots\edge_{n+1}$ and $\edgetwo_0\cdots\edgetwo_{n+1}$ in $G_{n+1}$ share at most one vertex~$v$.  Then $\edge_0\cdots\edge_k$ and $\edgetwo_0\cdots\edgetwo_k$ must both be incident on $v$ for all $k > n$, so $\edge_0\edge_1\edge_2\cdots$ and $\edgetwo_0\edgetwo_1\edgetwo_2\cdots$ both represent~$v$.

Statement (4) follows immediately from statements (2) and (3).
\end{proof}

\begin{figure}
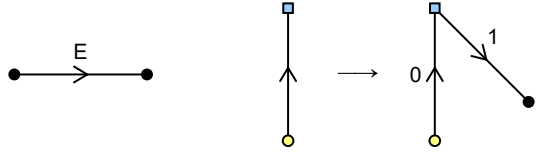

\centering
\vgraphics{OneEdgeBaseGraph}
\qquad\qquad
\vgraphics{EdgeReplacement1} \quad\longvarrow\quad \vgraphics{BadEdgeReplacement}
\caption{A non-expanding replacement system.}
\label{fig:NonExpanding}
\end{figure}
\begin{remark}The gluing relation $\sim$ is not necessarily an equivalence relation in the case of a non-expanding replacement system.  For example, if $\R$ is the replacement system shown in Figure~\ref{fig:NonExpanding}, then $\mathsf{E}\overline{\mathsf{0}} \sim \mathsf{E1}\overline{\mathsf{0}}$ and $\mathsf{E1}\overline{\mathsf{0}} \sim \mathsf{E11}\overline{\mathsf{0}}$, but $\mathsf{E}\overline{\mathsf{0}} \not\sim \mathsf{E11}\overline{\mathsf{0}}$, so the gluing relation is not transitive.

Of course, one could still define the limit space in this case by using the transitive closure of the gluing relation.  However, this often results in a limit space that is not Hausdorff.  For example, if we let $\approx$ be the transitive closure of the gluing relation for the replacement system in Figure~\ref{fig:NonExpanding}, then $\mathsf{E}\overline{\mathsf{1}} \not\approx \mathsf{E}\overline{\mathsf{0}}$, but every neighborhood of $\mathsf{E}\overline{\mathsf{1}}$ in $\Omega$ contains a point from the $\approx$-equivalence class of~$\mathsf{E}\overline{\mathsf{0}}$, e.g.~$\mathsf{E11}\cdots\mathsf{1}\overline{0}$ for a sufficiently long string of $\mathsf{1}$'s.
\end{remark}

We next prove some basic facts about cells.  For convenience, we shall henceforth refer to points in $X$ that are not gluing vertices as \newword{regular points}.  Note that each regular point has a single representative in~$\Omega$, and that every regular point contained in a cell is an interior point of that cell.

\begin{proposition}\label{prop:contain}\quad
\begin{enumerate}
\item The interior of each cell in $X$ has at least one gluing vertex and one regular point.
\item If $p$ is a point in the interior of a cell $C(e)$, then every address for $p$ has $e$ as a prefix.
\item If $e$ is a prefix for $f$, then $C(e) \supseteq C(f)$.
\item If neither $e$ nor $f$ is a prefix for the other, then the interiors of $C(e)$ and $C(f)$ are disjoint.
\end{enumerate}
\end{proposition}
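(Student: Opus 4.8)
First I would isolate one elementary observation about expansions from which all four parts follow, together with the fact (noted above) that a regular point has a unique preimage in $\Omega$: if $e=\edge_0\cdots\edge_m$ is an edge of $G_m$ and $\edge_0\cdots\edge_m\edge_{m+1}\cdots\edge_k$ is an edge of $G_k$ extending it, then each of its two endpoints is either an endpoint of $e$ itself, or a vertex of the form $\edge_0\cdots\edge_\ell\vertex$ with $m\le\ell<k$ and $\vertex$ an interior vertex of $R$; this is immediate by induction on $k-m$ from the definition of simple expansion. Two consequences: (a) a vertex $u\in V(G_m)$ lies in $C(e)$ if and only if it is an endpoint of $e$ --- for a representative of $u$ lying in $\Omega(e)$ would have to be eventually incident on edges extending $e$, and the ``fresh'' vertices above have strings of length exceeding $m+1$; and (b) each gluing vertex $u\notin V(G_0)$ is created by expanding a unique edge $\delta_u$ (if $u\in V(G_{j+1})\setminus V(G_j)$ then $u=\delta_u\vertex$ for a unique edge $\delta_u$ of $G_j$ and interior vertex $\vertex$), and in every $G_k$ with $k>j$, every edge incident on $u$ has $\delta_u$ as a prefix. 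Part~(3) then needs nothing: if $e$ is a prefix of $f$ then $\Omega(f)\subseteq\Omega(e)$, hence $C(f)\subseteq C(e)$.

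Next I would prove~(2) and deduce~(4) from it. Let $p$ lie in the interior of $C(e)$, $e=\edge_0\cdots\edge_m$, and let $\alpha$ be an address for $p$. If $p$ is regular then $\alpha$ is its only preimage in $\Omega$, and since $p\in C(e)$ that preimage lies in $\Omega(e)$, so $\alpha$ has $e$ as a prefix. If $p$ is a gluing vertex $u$, then $u$ is not an endpoint of $e$ (those are the boundary points of $C(e)$), so applying the observation above to an address of $u$ lying in $\Omega(e)$ forces the string of $u$ to extend $e$; hence $\delta_u$ has $e$ as a prefix, and by~(b) every address for $u$ has all sufficiently long truncations incident on $u$, hence extending $\delta_u$, hence extending $e$. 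That proves~(2). For~(4): if $p$ lay in the interiors of both $C(e)$ and $C(f)$, then by~(2) any single address of $p$ would have each of the strings $e,f$ as a prefix, so the shorter of the two would be a prefix of the longer, contradicting the hypothesis that neither is a prefix of the other.

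Finally, part~(1). For the gluing vertex I would take an interior vertex $\vertex$ of $R$ (one exists, since $R$ has at least three vertices); then $e\vertex$ is a gluing vertex with $\delta_{e\vertex}=e$, it has a representative in $\Omega$ by Proposition~\ref{prop:GluingRelation}(1), that representative lies in $\Omega(e)$ by consequence~(b), so $e\vertex\in C(e)$, and $e\vertex$ is not an endpoint of $e$, so it lies in the interior of $C(e)$. For the regular point I would construct edges $\edgetwo_1,\edgetwo_2,\ldots$ of $R$ so that the point $e\edgetwo_1\edgetwo_2\cdots\in\Omega$ represents no gluing vertex --- equivalently, so that no single vertex is an endpoint of the edge $e\edgetwo_1\cdots\edgetwo_k$ for all large $k$; its image in $X$ is then a regular point, which by the remark before the proposition automatically lies in the interior of $C(e)$. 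To build the sequence I would use two features of an expanding $R$: it has an edge meeting an interior vertex (so that using it repeatedly keeps feeding ``fresh'' vertices into the endpoint data, by the observation), and --- since the initial and terminal vertices of $R$ are non-isolated and are not joined by an edge --- it has an edge avoiding the initial vertex and an edge avoiding the terminal vertex (so that a boundary vertex that would otherwise stay an endpoint forever can be discarded). Iterating one or two suitably chosen such edges in a fixed periodic pattern yields the required sequence.

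I expect this last construction to be the only genuinely delicate point. Showing that the chosen periodic pattern really does keep the endpoint data from stabilizing requires a short case analysis, according to which of the initial vertex, the terminal vertex, or an interior vertex of $R$ each chosen edge meets, and it has to be carried out with some care because of the possibilities of loops in $R$ and of edges joining a boundary vertex to an interior vertex. Everything else reduces mechanically to the expansion observation and the uniqueness of preimages of regular points.
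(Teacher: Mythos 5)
Your handling of parts (2), (3) and (4), and of the gluing vertex in part (1), is correct and follows essentially the paper's own route: the paper also gets (3) directly from $\Omega(f)\subseteq\Omega(e)$, proves (2) by observing that a gluing vertex $\edge_0\cdots\edge_i\nu$ lying in the interior of $C(e)$ forces $e$ to be a prefix of $\edge_0\cdots\edge_i$, after which every edge incident on that vertex extends $e$, and deduces (4) from (2); it also obtains the gluing vertex in the interior of $C(e)$ as $e\nu$ for an interior vertex $\nu$ of $R$. Your preliminary ``observation'' and its consequences (a)--(b) simply make explicit what the paper asserts tersely, which is fine.

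The one genuine gap is precisely the step you flag yourself: the existence of a regular point in the interior of $C(e)$. You describe a plan (iterate ``one or two suitably chosen edges in a fixed periodic pattern'') but never exhibit the pattern, and you explicitly defer the verification to an unperformed case analysis; as written, this half of (1) is not proved. The paper closes it with a short explicit choice: let $\edgetwo_i$ be an edge of $R$ incident on the initial vertex and $\edgetwo_t$ an edge incident on the terminal vertex (these exist because the boundary vertices are not isolated). If $\edgetwo_i$ is incoming at the initial vertex, then $e\overline{\edgetwo_i}$ is the desired point: since the boundary vertices of $R$ are not joined by an edge, the other endpoint of $\edgetwo_i$ is an interior vertex, so each successive edge $e\edgetwo_i^{\,k}$ has its endpoints among freshly created vertices and no single vertex stays incident for all large $k$; hence the point represents no gluing vertex, and a regular point of a cell automatically lies in its interior. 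If instead $\edgetwo_i$ is outgoing at the initial vertex (so that repeating it alone would stay pinned at the initial endpoint of $e$), the paper uses the alternating word $e\overline{\edgetwo_i\edgetwo_t}$ instead, and the same non-stabilization argument applies. So the ``delicate point'' you anticipated amounts in the paper to a two-case split on the orientation of $\edgetwo_i$ at the initial vertex together with a two-line check (your caution about loops at a boundary vertex is not misplaced, as that is the one configuration where the check needs an extra word); supplying that explicit word and check is exactly what your write-up is missing.
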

\begin{proof}For statement (1), let $C(e)$ be a cell in $X$. Since $\R$ is expanding, the replacement graph $R$ has an interior vertex $\nu$, so $e\nu$ is a gluing vertex that lies in the interior~$C(e)$.  Similarly, $R$ must have an edge $\edgetwo_i$ that is incident on the initial vertex, and an edge $\edgetwo_t$ that is incident on the terminal vertex.  If $\edgetwo_i$ is incoming at the initial vertex of~$R$, then $\edge_0\cdots \edge_n \overline{\edgetwo_i}$ is a regular point in the interior of~$C(e)$, while if $\edgetwo_i$ is outgoing at the initial vertex of~$R$, then $\edge_0\cdots \edge_n\overline{\edgetwo_i\edgetwo_t}$ is a regular point in the interior of $C(e)$.

Statement (2) is obvious if $p$ is a regular point.  If $p$ is a gluing vertex $\edge_0\edge_1 \ldots \edge_i\nu$, then $e$ must be a prefix for $\edge_0\edge_1\cdots \edge_i$.  Then any edge in any $G_n$ incident on $p$ must also have $e$ as a prefix, and the statement follows.

Finally, statement (3) follows immediately from the definition of the cells, and statement (4) follows from statement~(2).
\end{proof}

We wish to prove the following theorem.

\begin{theorem}\label{thm:hausdorff} $X$ is a compact metrizable space.
\end{theorem}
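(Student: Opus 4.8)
\emph{Plan of proof.} Compactness is immediate: $\Omega = E(G_0)\times E(R)^\infty$ is a product of finite discrete spaces, hence compact, and $X$ is its continuous image under the quotient map $\pi\colon\Omega\to X$. So the real content is metrizability, and I would derive it from the Urysohn metrization theorem after establishing that $X$ is Hausdorff and second countable. Everything will follow formally from one observation about the gluing relation, namely that $\sim$ is a \emph{closed} subset of $\Omega\times\Omega$; recall that $\sim$ is already known to be an equivalence relation by Proposition~\ref{prop:GluingRelation}, which is where the expanding hypothesis is genuinely spent.

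To show $\sim$ is closed, write $\sim \;=\; \bigcap_{n\geq 0}S_n$, where $S_n$ is the set of pairs $(\edge_0\edge_1\cdots,\,\edgetwo_0\edgetwo_1\cdots)$ such that the edges $\edge_0\cdots\edge_n$ and $\edgetwo_0\cdots\edgetwo_n$ of $G_n$ share a vertex. Whether two edges of the finite graph $G_n$ share a vertex depends only on the first $n+1$ coordinates of each sequence, so $S_n$ is a finite union of boxes $\Omega(e)\times\Omega(f)$ with $e,f\in E(G_n)$ and is therefore clopen in $\Omega\times\Omega$. A countable intersection of closed sets is closed, so $\sim$ is closed.

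Next I would record that $\pi$ is a closed map: for $A\subseteq\Omega$ closed, $\pi^{-1}(\pi(A))$ equals the first-coordinate image of $(\Omega\times A)\cap{\sim}$, which is closed, hence compact, in $\Omega\times\Omega$; being the continuous image of a compact set, $\pi^{-1}(\pi(A))$ is then closed in $\Omega$, so $\pi(A)$ is closed in $X$. Hausdorffness of $X$ follows by the standard saturation trick: disjoint fibres $\pi^{-1}([x])$ and $\pi^{-1}([y])$ lie in disjoint open subsets $U,V$ of the normal space $\Omega$, and $X\setminus\pi(\Omega\setminus U)$ and $X\setminus\pi(\Omega\setminus V)$ are then disjoint open neighborhoods of $[x]$ and $[y]$. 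For second countability, fix a countable basis $\{B_k\}$ of $\Omega$ and set $U_F = X\setminus\pi\bigl(\Omega\setminus\bigcup_{k\in F}B_k\bigr)$ for finite $F\subseteq\N$; covering the compact fibre $\pi^{-1}(x)$ by finitely many $B_k$ lying inside the preimage of a prescribed neighborhood shows that $\{U_F\}$ is a countable basis. Urysohn's theorem then yields metrizability. (Alternatively, one may simply cite the classical fact that a Hausdorff continuous image of a compact metrizable space is metrizable, which subsumes this paragraph.)

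I do not anticipate a genuine obstacle here: once $\sim$ is known to be an equivalence relation, the statement is soft point-set topology, and the clopen description of each $S_n$ makes the only nontrivial input — closedness of $\sim$ — essentially a one-line observation. The point to be careful about is that the argument must route through Proposition~\ref{prop:GluingRelation} rather than re-examining gluing vertices directly, since the expanding hypothesis is truly used there (without it $\sim$ need not be transitive, and the remark following Proposition~\ref{prop:GluingRelation} exhibits a case where the resulting quotient is not Hausdorff). Building an explicit metric on $X$ is possible but unnecessary for this statement and would be messier than the abstract route.
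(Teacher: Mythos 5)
Your proposal is correct, but the route to Hausdorffness is genuinely different from the paper's. The paper makes the same opening reduction you do (citing the classical fact that a Hausdorff quotient of a compact metrizable space is compact and metrizable, via~\cite{Bou}), but it then verifies the Hausdorff property \emph{concretely}: it first proves Lemma~\ref{lem:celltop} (points and cells are closed, cell interiors are open), and then separates two points of $X$ by an explicit case analysis --- interiors of cells separate two regular points, open stars $\mathrm{St}_n(v)$ of gluing vertices separate two gluing vertices or a gluing vertex from a regular point --- invoking the expanding hypothesis once more to ensure no edge of $G_{i+1}$ meets two given gluing vertices. You instead observe that $\sim\;=\;\bigcap_n S_n$ is a closed subset of $\Omega\times\Omega$ (each $S_n$ being clopen since it depends on finitely many coordinates), deduce that the quotient map is closed by the compactness/projection argument, and then run the standard saturation trick; this is a soft argument that needs the expanding hypothesis only through Proposition~\ref{prop:GluingRelation} (to know $\sim$ is an equivalence relation at all), and your identification of that as the one place the hypothesis is genuinely spent is accurate. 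What the paper's approach buys is the explicit open sets and Lemma~\ref{lem:celltop} itself, which are not throwaway scaffolding: closedness of cells and openness of cell interiors are exactly the facts about the cell structure used elsewhere, and the stars $\mathrm{St}_n(v)$ give geometric insight into neighborhoods of gluing vertices. What your approach buys is generality and economy: it shows that Hausdorffness (hence metrizability) is automatic for the quotient of $\Omega$ by \emph{any} closed equivalence relation, with no case analysis on point types; your final second-countability paragraph is correct but, as you note, can be replaced by the same citation the paper uses. Minor point worth making explicit if you write this up: the fibres of $\pi$ are closed because they are slices of the closed set $\sim$, which is what licenses the appeal to normality of $\Omega$.
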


Since $\Omega$ is compact and metrizable and $X$ is a quotient of~$\Omega$, it suffices to prove that $X$ is Hausdorff (see~\cite[Proposition~IX.17]{Bou}). To prove this, we need some open and closed sets in~$X$.

\begin{lemma}\label{lem:celltop}\quad
\begin{enumerate}
\item One-point sets are closed in~$X$.
\item Each cell is closed in~$X$, and the interior of each cell is open in~$X$.
\end{enumerate}
\end{lemma}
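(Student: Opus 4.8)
The plan is to transfer everything to the symbol space: a subset of $X$ is closed (respectively open) exactly when its preimage under the quotient map $q\colon\Omega\to X$ is closed (respectively open) in $\Omega$, and each cylinder set $\Omega(e)$ is clopen in $\Omega$ because the factors $E(G_0)$ and $E(R)$ are finite. So both parts reduce to identifying the relevant preimages inside $\Omega$ and checking that they are closed or open there.

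The key preliminary step, and the only nonroutine one, is to sharpen the notion of ``represents''. I would show: if $v\in V(G_N)$ and the point $\edge_0\edge_1\edge_2\cdots\in\Omega$ represents $v$, then $\edge_0\cdots\edge_n$ is already incident on $v$ in $G_n$ for \emph{every} $n\ge N$, not just for all large $n$. This goes by downward induction on $n$: the endpoints of $\edge_0\cdots\edge_n$ in $G_n$ are either endpoints of $\edge_0\cdots\edge_{n-1}$ in $G_{n-1}$ or ``new'' vertices of the form $\edge_0\cdots\edge_{n-1}\mu$ with $\mu$ an interior vertex of $R$, and those new vertices lie in $V(G_n)\setminus V(G_{n-1})$; so once $n-1\ge N$ they cannot equal $v\in V(G_N)\subseteq V(G_{n-1})$, which forces $v$ to be an endpoint of $\edge_0\cdots\edge_{n-1}$. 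Granting this, for a gluing vertex $v$ we obtain $q^{-1}(\{v\})=\bigcap_{n\ge N}U_n$, where $U_n$ is the finite union of the cylinders $\Omega(e)$ over edges $e$ of $G_n$ incident on $v$; each $U_n$ is clopen, so $q^{-1}(\{v\})$ is closed. Combined with the fact that the preimage of a regular point is a single point of the metrizable (hence Hausdorff) space $\Omega$, this gives part~(1): every one-point set in $X$ is closed.

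For part~(2), fix an edge $e$ and let $u_1,\dots,u_j$ (with $j\in\{1,2\}$) be the endpoints of $e$, i.e.\ the boundary points of the cell $C(e)$. I would first check the decomposition $q^{-1}(C(e))=\Omega(e)\cup\bigcup_{i}q^{-1}(\{u_i\})$: the inclusion ``$\supseteq$'' is immediate since $C(e)=q(\Omega(e))$ and each $u_i\in C(e)$, and for ``$\subseteq$'' note that every point of $C(e)$ is either one of the boundary points $u_i$ or lies in the interior of $C(e)$, in which case Proposition~\ref{prop:contain}(2) forces all of its addresses to have $e$ as a prefix. Since $\Omega(e)$ is clopen and each $q^{-1}(\{u_i\})$ is closed by part~(1), $q^{-1}(C(e))$ is closed, hence $C(e)$ is closed in $X$. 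The same bookkeeping, now using that the interior of $C(e)$ is precisely $C(e)\setminus\{u_1,\dots,u_j\}$, gives $q^{-1}\bigl(\operatorname{int}C(e)\bigr)=\Omega(e)\setminus\bigcup_i q^{-1}(\{u_i\})$: if $q(x)$ lies in the interior then $x\in\Omega(e)$ by Proposition~\ref{prop:contain}(2) and $q(x)\ne u_i$ for all $i$, while conversely any $x\in\Omega(e)$ with $q(x)\notin\{u_1,\dots,u_j\}$ maps into the interior. This set is the intersection of the open set $\Omega(e)$ with the complements of finitely many closed sets, hence open, so the interior of $C(e)$ is open in $X$.

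The only obstacle worth flagging is the sharpening of ``represents'' in the second paragraph; without it, $q^{-1}(\{v\})$ is only obviously an $F_\sigma$ set (a countable union $\bigcup_{m}\bigcap_{n\ge m}U_n$), which would not suffice. Everything after that is routine: decomposing preimages, recalling that cylinders are clopen and that regular points have unique addresses, and quoting Proposition~\ref{prop:contain}(2). Part~(1) is what makes part~(2) go through, and the two together are exactly the open and closed sets needed to run the Hausdorffness argument for Theorem~\ref{thm:hausdorff}.
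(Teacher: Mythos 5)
Your proof is correct and follows essentially the same route as the paper: express $q^{-1}$ of a gluing vertex as an intersection of finite unions of cylinders, and then decompose $q^{-1}(C(e))$ and $q^{-1}(\operatorname{int}C(e))$ as $\Omega(e)$ together with (or minus) the preimages of the boundary vertices, citing Proposition~\ref{prop:contain}. The only difference is that you make explicit, via the downward induction, why a point representing $v\in V(G_N)$ is incident on $v$ at \emph{every} stage $n\geq N$ (so the preimage is a genuine intersection rather than merely an $F_\sigma$), a point the paper's formula $q^{-1}(v)=\bigcap_{i>n}\bigcup_{e\in E_i}\Omega(e)$ uses implicitly.
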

\begin{proof}Let $q\colon \Omega\to X$ be the quotient map.  Statement (1) is obvious for regular points, since the preimage of a regular point is a single point in~$\Omega$.  For a gluing vertex $v$ with address $\edge_0\cdots \edge_n \nu$, observe that
\[
q^{-1}(v) \;=\; \bigcap_{i>n}\, \bigcup_{e\in E_i} \Omega(e),
\]
where $E_i$ is the set of edges in $G_i$ that are incident on $v$.  Since each $\Omega(e)$ is closed and each $E_i$ is finite, this set is closed.

For statement (2) if $C(e)$ is a cell with boundary vertices $v$ and $w$, then by statement (1) of Proposition~\ref{prop:contain}
\[
q^{-1}\bigl(C(e)\bigr) \;=\; \Omega(e) \cup q^{-1}(v) \cup q^{-1}(w),
\]
which is closed since $\Omega(e)$, $q^{-1}(v)$, and $q^{-1}(w)$ are closed.  Similarly, the preimage of the interior of $C(e)$ is
\[
\Omega(e) - \bigl(q^{-1}(v) \cup q^{-1}(w)\bigr),
\]
which is open since $\Omega(e)$ is open and $q^{-1}(v)$ and $q^{-1}(w)$ are closed.
\end{proof}

\begin{proof}[Proof of Theorem~\ref{thm:hausdorff}] For each gluing vertex $v\in V(G_n)$, let $\mathrm{St}_n(v)$ denote the union of $\{v\}$ with the interiors of the cells corresponding to the edges of $G_n$ that are incident on~$v$. Note that $\mathrm{St}_n(v)$ is open, since its complement is the union of the cells corresponding to the other edges of~$G_n$.

Now let $p$ and $q$ be distinct points of $X$. There are three cases to consider.
\begin{enumerate}
\item If $p$ and $q$ are regular points, then they have distinct addresses $\{ \edge_n \}$ and $\{ \edgetwo_n \}$ respectively.  Suppose that $\edge_i \not = \edgetwo_i$ for some $i$.  Then the interiors of the cells $C(\edge_0\edge_1 \ldots \edge_i)$ and $C(\edgetwo_0\edgetwo_1 \ldots \edgetwo_i)$ are disjoint open sets containing $p$ and $q$.
\item If both $p$ and $q$ are gluing vertices, then let $i$ be the minimal value so that $p, q \in V(G_i)$.  Since the replacement system is expanding, there are no edges in $G_{i+1}$ that are incident to both $p$ and $q$.  Then $\mathrm{St}_{i+1}(p)$ and $\mathrm{St}_{i+1}(q)$ are disjoint.
\item If $p$ is a gluing vertex but $q$ is not,
let $\{ \edgetwo_n \}$ be the address for $q$.  Then for sufficiently large $i$, the gluing vertex $p \in V(G_i)$ and $\edgetwo_i$ is not incident to~$p$.  Then $\mathrm{St}_i(p)$ and the interior of the cell $C(\edgetwo_0\edgetwo_1 \ldots \edgetwo_i)$ are disjoint open sets containing $p$ and $q$.\qedhere
\end{enumerate}
\end{proof}

\begin{remark}The connectedness of $R$ determines the connectedness of $X$.  In particular:
\begin{enumerate}
\item If $R$ is connected, then each cell in $X$ is path connected.  Conversely, if $R$ is disconnected, then each cell in $X$ has infinitely many components.
\item The initial and terminal vertices of $R$ lie in different components if and only if $X$ is totally disconnected.
\end{enumerate}
\end{remark}

\subsection{Metrics on the Limit Space}
\label{subsec:metrics}

In this subsection, we define a natural family of metrics on the limit space in the connected case.  Again, let $\R = (G_0,e\to R)$ be an expanding replacement system, and let $X$ be the corresponding limit space.  We assume that $R$ and $G_0$ are connected, and therefore $X$ is connected as well. 

We begin by fixing geodesic metrics on $G_0$ and~$R$.  Since these are graphs, this is equivalent to assigning a positive length $\ell(\edge)$ to each edge $\edge$ of $R$ or~$G_0$.  We make two assumptions about the metric on~$R$:
\begin{enumerate}
\item The distance between the boundary vertices of $R$ is equal to~$1$.
\item Every edge of $R$ has length strictly less than~$1$.
\end{enumerate}
Let $\{G_n\}$ be the full expansion sequence for~$\R$.  For each edge $\edge_0\edge_1\cdots \edge_n$ of~$G_n$, where $\edge_0$ is an edge of $G_0$ and $\edge_1,\ldots,\edge_n$ are edges of~$R$, we assign a length according to the formula
\[
\ell(\edge_0\edge_1\cdots\edge_n) = \ell(\edge_0)\,\ell(\edge_1)\,\cdots\,\ell(\edge_n).
\]
This puts a geodesic metric on each of the graphs~$G_n$.

Now, recall that the vertex sets $V(G_n)$ are nested, with
\[
V(G_0) \subset V(G_1) \subset V(G_2) \subset \cdots.
\]
Since the distance between the initial and terminal vertices of $R$ is equal to~$1$, the distance between any two vertices in $G_n$ is the same as the distance between them in $G_{n+1}$; that is, the metrics on all of the $G_n$'s agree for the vertices.  This gives us a metric $d$ on the set $V$ of gluing vertices in~$X$.

We wish to extend $d$ to all of~$X$.  Note that $V$ is dense in~$X$, since the preimage of $V$ is dense in the symbol space.  To prove that $d$ extends, we first need an upper bound on the diameters of cells.

For the following lemma, recall that the \newword{diameter} of any set $S\subset V$ is defined by the formula
\[
\mathrm{diam}(S) = \sup_{v,w\in S} d(v,w).
\]

\begin{lemma}\label{lem:diameterbound}There exists a constant $k>0$ with the following property: for every $n\in\mathbb{N}$ and every edge $e$ in $G_n$,
\[
\mathrm{diam}\bigl(V\cap C(e)\bigr) \leq k\,\ell(e).
\]
\end{lemma}
\begin{proof}Let $r$ be the maximum length of any edge in~$R$, and let $M = \mathrm{diam}(R)$.  We will prove the proposition for $k=1 + 2Mr/(1-r)$.

Let $e$ be an edge in some $G_n$.  Let $H_0$ be the subgraph of $G_n$ consisting of $e$ and its two endpoints, and let $\{H_i\}$ be the full expansion sequence for~$H_0$.  Note that each $H_i$ is a subgraph of $G_{n+i}$, with $V(H_i) = V(G_{n+i})\cap C(e)$.  Each edge of $H_i$ has length at most $r^{i}\ell(e)$, so each copy of $R$ that we paste into $H_i$ to obtain $H_{i+1}$ has diameter at most $Mr^i\ell(e)$.  It follows that
\[
\mathrm{diam}(H_{i+1}) \leq \mathrm{diam}(H_i) + 2Mr^i\,\ell(e).
\]
Since $\mathrm{diam}(H_0) \leq \ell(e)$, we conclude that
\[
\mathrm{diam}(H_i) \leq \ell(e) + 2M(r+r^2+r^3 + \cdots)\,\ell(e) = k\,\ell(e)
\]
for all $i$.  In particular $\mathrm{diam}\bigl(V(G_{n+i})\cap C(e)\bigr)\leq k\,\ell(e)$ for all~$i$, and therefore $\mathrm{diam}\bigl(V\cap C(e)\bigr)\leq k\,\ell(e)$.
\end{proof}

\begin{proposition}The metric on $V$ extends to a continuous metric on~$X$.
\end{proposition}
\begin{proof}In general, if $X$ is a topological space and $d$ is a metric defined on a dense subset $V$ of $X$, then $d$ can be extended to a continuous function $X\times X\to\mathbb{R}$ if and only if the following condition holds:
\begin{quote}
For every point $p \in X$, there exists a sequence $\{U_i\}$ of open neighborhoods of $p$ such that $\mathrm{diam}(V\cap U_i) \to 0$.
\end{quote}
We will verify this condition using two cases.  First, suppose $p$ is a regular point of~$X$ with address $\{\edge_i\}$.  Then the interior of each of the cells $C(\edge_0\edge_1\cdots\edge_i)$ is an open set containing~$p$.  By Lemma~\ref{lem:diameterbound}, each of these sets has diameter at most $k\,\ell(\edge_0\edge_1\cdots\edge_i)\leq kr^i\ell(\edge_0)$, where $r$ is the maximum length of any edge in~$R$, so the diameter approaches zero as~$i\to\infty$.  The second case is when $p$ is a gluing vertex, say $p\in V(G_n)$ for some~$n$.  For this case, the sequence $\mathrm{St}_{n+i}(p)$ of open stars defined in the proof of Theorem~\ref{thm:hausdorff} has the property that $\mathrm{diam}\bigl(\mathrm{St}_{n+i}(p)\bigr) \leq 2kr^iL$ for all~$i$, where $L$ is the maximum length of all edges incident on $p$ in~$G_n$.  In particular, the diameters of these stars approaches zero as~$i\to\infty$.

Thus $d$ extends to a continuous function $d\colon X\times X\to\mathbb{R}$, which is necessarily a pseudometric.  To prove that $d$ is a metric, let $p,q$ be distinct points in~$X$ with addresses $\{\edge_n\}$ and $\{\edgetwo_n\}$.  Since $p$ and $q$ are distinct, there exists an $n$ such that the edges $\edge_0\edge_1\cdots\edge_n$ and $\edgetwo_0\edgetwo_1\cdots\edgetwo_n$ do not share a vertex in~$G_n$.  Let $\delta$ denote the distance between these edges in~$G_n$. Let $S = V\cap C(\edge_0\edge_1\cdots\edge_n)$ and $T = V\cap C(\edgetwo_0\edgetwo_1\cdots\edgetwo_n)$.  Note that $S$ is dense in $C(\edge_0 \edge_1\cdots\edge_n)$ and $T$ is dense in $C(\edgetwo_0\edgetwo_1\cdots\edgetwo_n)$, so in particular $p$ lies in the closure of $S$ and $q$ lies in the closure of~$T$.  But every point in $S$ lies a distance of at least $\delta$ from every point in $T$, and therefore $d(p,q)\geq \delta$.
\end{proof}

\begin{notes}\quad
\begin{enumerate}
\item Since $X$ is compact and Hausdorff, all continuous metrics on $X$ define the same topology.  Thus the topology determined by the metric we have described is the same as the previously defined metrizable topology on~$X$.
\item Though we will not prove it here, the metric that we have defined on $X$ is actually a geodesic metric.
\item With respect to this metric, the canonical homeomorphism between any two cells in $X$ having the same number of boundary vertices is a similitude, so rearrangements are piecewise-similar homeomorphisms.
\end{enumerate}
\end{notes}

\section{Rearrangement Groups}
\label{sec:RearrangementGroups}

In this section we initiate the algebraic study of rearrangement groups.  Subsection~\ref{subsec:examples} presents a large number of examples of rearrangement groups acting on self-similar spaces.  In Subsection~\ref{subsec:thompson} we show that Thompson's groups $F$, $T$, and $V$ are rearrangement groups, as are many other generalized Thompson groups.  Further, we show that a large class of rearrangement groups contain a copy of Thompson's group~$F$ (Proposition~\ref{prop:containsf}), and that every rearrangement group can be embedded into Thompson's group~$V$. We characterize all finite subgroups of rearrangement groups in Subsection~\ref{subsec:finite}, and we use this characterization to prove that many of the rearrangement groups under consideration are non-isomorphic.  Lastly, in Subsection~\ref{subsec:colors}, we examine the natural generalization of replacement systems to graphs with edge colorings.  This generalization allows us to build colored replacement systems whose limit spaces are a broader class of fractals, as well as recover many diagram groups as rearrangement groups.

\subsection{Examples}
\label{subsec:examples}

In this subsection we present several examples of replacement systems and limit sets, and discuss the corresponding rearrangement groups.  The examples introduced here include infinite families of replacement systems that generalize those for the Vicsek fractal and basilica Julia set, as well as an example of a replacement system for the Julia set of a rational map and a replacement system whose corresponding rearrangement group is trivial.

\begin{table}
\centering
\renewcommand{\arraystretch}{1.25}
\newcommand{\mystrut}{\rule{0pt}{0.7in}}
\begin{tabular}{@{}c@{\hspace{0.35in}}c@{\hspace{0.45in}}l@{\hspace{0.25in}}c@{}}
\hline
Index & Base Graph & \quad\quad\quad Rule & Limit Space \\
\hline
$n=3$ & \mystrut\vgraphics{GeneralizedVicsek3BaseGraph} & $\vgraphics{EdgeReplacement1Tall} \;\;\longvarrow\;\; \vgraphics{GeneralizedVicsek3EdgeReplacement}$ &
\vgraphics{GeneralizedVicsek3} \\
$n=4$ & \mystrut\vgraphics{VicsekBaseGraphBlank} & $\vgraphics{EdgeReplacement1Tall} \;\;\longvarrow\;\; \vgraphics{VicsekEdgeReplacementBlank}$ &
\vgraphics{VicsekSmall} \\
$n=5$ & \mystrut\vgraphics{GeneralizedVicsek5BaseGraph} & $\vgraphics{EdgeReplacement1Tall} \;\;\longvarrow\;\; \vgraphics{GeneralizedVicsek5EdgeReplacement}$ &
\vgraphics{GeneralizedVicsek5} \\
\vdots & \rule{0pt}{24pt}\vdots & \hspace{0.55in}\vdots & \vdots \\[12pt]
\hline
\end{tabular}
\caption{The Vicsek family of replacement systems.}
\label{tab:VicsekFamily}
\end{table}
\begin{example}[The Vicsek Family]
\label{ex:VicsekFamily}
Table~\ref{tab:VicsekFamily} shows the \newword{Vicsek family} of replacement systems, of which the Vicsek replacement system from Example~\ref{ex:Vicsek} is the $n=4$ case.  Each of the resulting limit spaces is homeomorphic to the standard universal dendrite of order~$n$ (see~\cite{Ch}).  These spaces can also be realized as the fixed sets of iterated function systems in the plane, or as Julia sets associated to quadratic polynomials.  In the former case, the rearrangement groups act by homeomorphisms that are piecewise Euclidean similarities, while in the latter case they act by piecewise conformal homeomorphisms.

The Vicsek family of rearrangement groups are nested, with the $n=3$ rearrangement group contained in the rearrangement group of the Vicsek fractal, which is in turn contained in the rearrangement group of the $n=5$ case.  The finite subgroups of these rearrangement groups are examined in detail in Subsection~\ref{subsec:finite}. We will prove in Theorem~\ref{thm:vicsekfinfty} that all of these rearrangement groups have type~$F_\infty$.
\end{example}

\begin{figure}[b]
\centering
\subfloat[\label{subfig:MandelbrotBasilica}]{\includegraphics{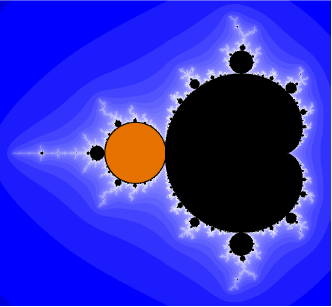}}
\hfill
\subfloat[\label{subfig:MandelbrotRabbits}]{\includegraphics{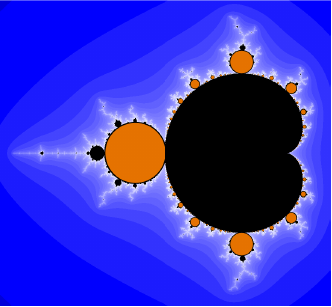}}
\caption{(a) The interior component of the Mandelbrot set that corresponds to the basilica. (b) The interior components of the Mandelbrot set that correspond to the family of rabbits.}
\end{figure}
\begin{example}[The Basilica Thompson Group]
The rearrangement group $T_B$ for the basilica replacement system from Example~\ref{ex:BasilicaReplacement} is called the \newword{basilica Thompson group}. In \cite{BeFo} the authors proved the following facts about this group:
\begin{enumerate}
\item $T_B$ is generated by four elements.
\item Thompson's group $T$ contains copies of $T_B$, and $T_B$ contains $T$.
\item The commutator subgroup $[T_B,T_B]$ has index two in $T_B$ and is simple.  It is not isomorphic to~$T$.
\end{enumerate}
More recently, Witzel and Zaremsky show that $T_B$ is not finitely presented~\cite{WiZa}.  For further discussion, see Example~\ref{ex:BasilicanotFinfty}.

As mentioned previously, the limit space $X$ for the basilica replacement system is homeomorphic to the Julia set for the function $f(z) = z^2-1$.  More generally, Figure~\subref*{subfig:MandelbrotBasilica} shows the interior component of the Mandelbrot set that contains~$-1$.  If $c$ is any point in this component, then the Julia set $J_c$ for $f(z) = z^2+c$ is homeomorphic to the basilica.  For any such~$J_c$, the canonical homeomorphisms between cells of $X$ act as conformal homeomorphisms on pieces of~$J_c$, so $T_B$ acts by piecewise-conformal homeomorphisms on~$J_c$.
\end{example}

\begin{table}
\centering
\renewcommand{\arraystretch}{1.25}
\newcommand{\mystrut}{\rule{0pt}{0.7in}}
\begin{tabular}{@{}c@{\hspace{0.35in}}c@{\hspace{0.5in}}l@{\hspace{0.05in}}c@{}}
\hline
Index & Base Graph & \quad\; Rule & Limit Space \\
\hline
$n=1$ & \mystrut\vgraphics{BasilicaBase2} & $\vgraphics{EdgeReplacement1} \;\;\longvarrow \vgraphics{EdgeReplacement2}$ &
\vgraphics{BasilicaSmall} \\
$n=2$ & \mystrut\vgraphics{RabbitBase} & $\vgraphics{EdgeReplacement1} \;\;\longvarrow \vgraphics{RabbitEdgeReplacement}$ &
\vgraphics{Rabbit} \\
$n=3$ & \mystrut\vgraphics{Rabbit4Base} & $\vgraphics{EdgeReplacement1} \;\;\longvarrow \vgraphics{Rabbit4EdgeReplacement}$ &
\vgraphics{Rabbit4} \\
\vdots & \rule{0pt}{24pt}\hspace{0.3in}\vdots & \hspace{0.3in}\vdots & \vdots \\[12pt]
\hline
\end{tabular}
\caption{Replacement systems for the rabbit family. Note that while the base graph for $n=1$ is different than the \protect one for the basilica replacement system given in Figure~\protect\subref*{subfig:BasilicaReplacementRule}, the rearrangement group is isomorphic, as we will show in Subsection~\ref{subsec:genrearrange}.}
\label{tab:Rabbits}
\end{table}
\begin{example}[The Family of Rabbits]
\label{ex:rabbit}
The basilica replacement system can be generalized to the family of \newword{rabbit replacement systems}, shown in Table~\ref{tab:Rabbits}.  These correspond to Julia sets for functions of the form  $f(z) = z^2+c$, where $c$ lies in any interior component of the Mandelbrot set that is adjacent to the main cardioid, as shown in Figure~\subref*{subfig:MandelbrotRabbits}. There is one rabbit replacement system for each natural number~$n$, where $n=1$ corresponds to the basilica, $n=2$ is corresponds to the well-known Douady rabbit, $n=3$ corresponds to a three-earred rabbit, and so forth.  The $n=0$ case corresponds to Thompson's group~$T$ (see Proposition~\ref{prop:ThompsonsGroups}).

The resulting replacement groups are nested, with $T$ contained in the basilica Thompson group, which is in turn contained in the rearrangement group for the Douady rabbit, and so on. All of these groups are finitely generated (see Example~\ref{ex:BasilicanotFinfty}).
\end{example}

In general, a complex polynomial $f$ is called \newword{postcritically finite} if every critical point of $f$ has finite forward orbit.  The structure of the Julia set for a postcritically finite polynomial can be described combinatorially by its ``Hubbard tree''~\cite{DH}.  Using Hubbard trees, the authors have developed an algorithm that derives replacement systems for the Julia sets of many different postcritically finite polynomials.  In general, the replacement system derived in this fashion involves colored edges and multiple replacement rules as explained in Subsection~\ref{subsec:colors}.

\begin{figure}
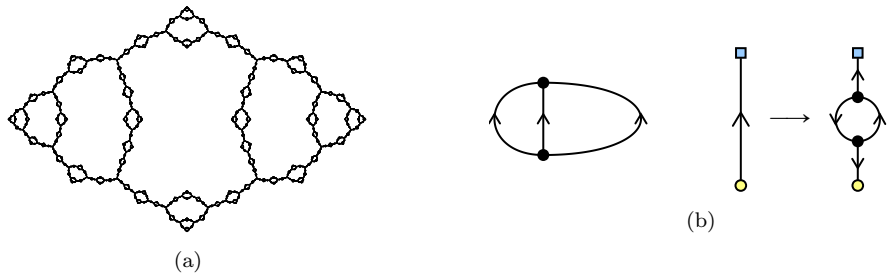

\subfloat[\label{subfig:BubbleBath}]{\vgraphics{BubbleBath}}
\hfill
\subfloat[\label{subfig:BubbleBathReplacement}]{$\vgraphics{BubbleBathBase}\quad\qquad\vgraphics{EdgeReplacement1} \;\;\longvarrow \vgraphics{BubbleBathReplacement}$ }
\caption{(a) The Julia set for $f(z) = z^{-2} -1$. (b) A replacement system whose limit space is homeomorphic to this Julia set.}
\label{fig:BubbleBathAll}
\end{figure}
\begin{example}[Julia Sets for Rational Maps]
Julia sets for rational functions---even postcritically finite ones---cannot be described in general using replacement systems.  For example, J.~Milnor and T.~Lei have proven that there exists a postcritically finite quadratic rational function whose Julia set is homeomorphic to a Sierpi\'{n}ski carpet~\cite{MiLe}.  Since the Sierpi\'{n}ski carpet cannot be disconnected by removing any finite set, it is not homeomorphic to the limit set of any replacement system.

However, there are certainly some rational functions whose Julia sets can be described as the limit sets of edge replacement systems. For example, Figure~\subref*{subfig:BubbleBath} shows the Julia set for the rational function $f(z) = z^{-2}-1$.  This Julia set is homeomorphic to the limit space of the replacement system shown in Figure~\subref*{subfig:BubbleBathReplacement}.  Each of the canonical homeomorphisms between cells acts as a homeomorphism between portions of the Julia set, and hence the rearrangement group acts on the Julia set by piecewise-conformal homeomorphisms.
\end{example}

\begin{figure}
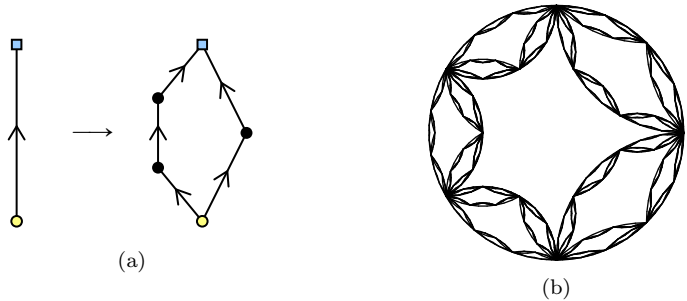

\centering
\subfloat[\label{subfig:TrivialRule}]{\vgraphics{EdgeReplacement1Tall} \quad\longvarrow\quad \vgraphics{TrivialReplacement}}
\qquad\qquad\qquad
\subfloat[\label{subfig:TrivialLimitSpace}]{\vgraphics{LimitSpaceTrivial}}
\caption{(a) A replacement rule that leads to a trivial replacement group. (b)~The associated limit space, using a single edge as the base graph.}
\end{figure}
\begin{example}[A Trivial Rearrangement Group]
\label{ex:trivial}
Though rearrangement groups provide a wide variety of interesting examples, it is not difficult to construct replacement systems with trivial rearrangement group.  For example, consider the replacement rule shown in Figure~\subref*{subfig:TrivialRule}.  If we use a single edge as the base graph, the resulting limit space is shown in Figure~\subref*{subfig:TrivialLimitSpace}.

Any rearrangement of this limit space must fix the two vertices of~$G_0$, since these are the only source and sink, respectively, in any expansion.  But removing these vertices yields two complementary components that are not homeomorphic, since the left component has two different points whose removal disconnects it into three pieces, while right component has only one such point.  It follows that any rearrangement must fix the vertices of~$G_1$, and therefore each of the cells corresponding to an edge of~$G_1$ must map to itself.  By induction, it follows that every rearrangement of this limit space is the identity.
\end{example}

\subsection{Relation to Thompson's Groups}
\label{subsec:thompson}

In this subsection we show that Thompson's groups $F$, $T$, and $V$ can be realized as rearrangement groups.  We also prove that many different rearrangement groups contain a copy of Thompson's group~$F$.

We assume in this section that the reader is familiar with Thompson's groups.  See \cite{CFP} for a general introduction.

\begin{table}
\centering
\renewcommand{\arraystretch}{1.25}
\newcommand{\mystrut}{\rule{0pt}{0.6in}}
\begin{tabular}{c@{\hspace{0.4in}}c@{\hspace{0.5in}}c}
\hline
Group & Base Graph & Replacement Rule \\
\hline
\raisebox{-0.5ex}{$F$} & \mystrut\vgraphics{OneEdgeBaseGraphBlank} & $\vgraphics{EdgeReplacement1} \;\;\quad\longvarrow\;\;\quad \vgraphics{DoubleEdgeReplacementBlank}$ \\
\raisebox{-0.5ex}{$T$} & \mystrut\vgraphics{CircleBaseGraph} & $\vgraphics{EdgeReplacement1} \;\;\quad\longvarrow\;\;\quad \vgraphics{DoubleEdgeReplacementBlank}$ \\
\raisebox{-0.5ex}{$V$} & \mystrut\vgraphics{OneEdgeBaseGraphBlank} & $\vgraphics{EdgeReplacement1} \;\;\quad\longvarrow\;\;\quad \vgraphics{VEdgeReplacement}$  \\[0.45in]
\hline
\end{tabular}
\caption{Replacement systems for Thompson's groups $F$, $T$, and $V$.}
\label{tab:ThompsonGroups}
\end{table}
\begin{proposition}\label{prop:ThompsonsGroups}The rearrangement groups corresponding to the replacement systems shown in Table~\ref{tab:ThompsonGroups} are isomorphic to Thompson's groups $F$, $T$, and $V$.
\end{proposition}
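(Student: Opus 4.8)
The plan is to establish the isomorphisms one group at a time, in each case by constructing an explicit bijection between rearrangements of the relevant limit space and elements of the corresponding Thompson group, and checking it respects composition. The unifying idea is that for all three replacement systems in Table~\ref{tab:ThompsonGroups}, the cells of the limit space are indexed by finite binary strings (since $R$ has exactly two edges, labeled, say, $\mathsf{0}$ and $\mathsf{1}$), and the tree of cells under inclusion is exactly the infinite rooted binary tree. A graph pair diagram $(E,E',\varphi)$ then amounts to a pair of finite rooted binary subtrees together with a bijection between their leaves, which is precisely the data of a tree pair diagram in the sense of~\cite{CFP}. The content of the proposition is that in each case the \emph{admissible} leaf bijections --- those for which $\varphi$ is a graph isomorphism and the induced map on cells is a homeomorphism of $X$ --- are exactly the ones defining elements of $F$, $T$, or $V$ respectively.

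\medskip

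\noindent\textbf{Step 1 ($V$).} For the $V$ replacement system the base graph is a single edge and the replacement rule has two parallel edges $\mathsf{0},\mathsf{1}$ from the initial vertex to the terminal vertex, with no interior vertex. I would first identify the limit space: $\Omega = \{\mathsf{0},\mathsf{1}\}^\infty$ with only the two ``all-$\mathsf{0}$'' and ``all-$\mathsf{1}$'' endpoints glued appropriately (in fact here the gluing relation is essentially trivial away from the two boundary vertices of $G_0$), so $X$ is a Cantor set with two marked points, and the cells $C(w)$ for $w\in\{\mathsf{0},\mathsf{1}\}^*$ form a basis of clopen sets matching the standard dyadic structure on Cantor space. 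Because $R$ has no interior vertex, \emph{any} bijection $\varphi$ between the leaf sets of two finite binary subtrees is automatically a graph isomorphism of the expansions, so every tree pair diagram with an arbitrary leaf bijection gives a rearrangement. The canonical homeomorphism $C(w)\to C(w')$ is the prefix-replacement map $w\zeta\mapsto w'\zeta$, which is exactly the standard generator of the Cantor-space action of $V$. Thus the map sending a reduced graph pair diagram to the corresponding reduced tree pair diagram is a well-defined bijection onto $V$, and the composition formula for graph pair diagrams (stated in the Remark after Proposition~\ref{prop:UniqueReduced}) matches composition of tree pair diagrams, so it is a group isomorphism.

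\medskip

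\noindent\textbf{Step 2 ($F$).} The $F$ replacement system uses the same rule but now one must record that the two edges of $R$ are \emph{directed} and ordered; an isomorphism of expansions must preserve edge directions, which forces the leaf bijection to be the unique order-preserving one between the two subtrees' leaf sets --- equivalently, the rearrangement must fix the two endpoints of $X$ and preserve the cyclic/linear order of cells. The limit space here is homeomorphic to the interval (the two boundary vertices of $G_0$ becoming the endpoints $0$ and $1$, with the standard dyadic subdivision), and the canonical homeomorphism between cells is the orientation-preserving affine map between the corresponding dyadic subintervals. Hence reduced graph pair diagrams biject with reduced tree pair diagrams having the order-preserving leaf bijection, i.e.\ with $F$. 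I would spell out the point that ``$\varphi$ a direction-preserving graph isomorphism'' is precisely ``$\varphi$ order-preserving on leaves'' --- that is the one genuinely combinatorial thing to verify, and it is short.

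\medskip

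\noindent\textbf{Step 3 ($T$).} The $T$ replacement system has the same rule but base graph a \emph{directed cycle} of some fixed length (a single loop or, as drawn, a small oriented cycle), so the limit space is the circle with the standard dyadic subdivision, and the boundary vertices of the edges of $G_0$ are cyclically ordered. Now an isomorphism of expansions must preserve edge directions and hence the cyclic order of leaves, but it need not fix a basepoint; so the admissible leaf bijections are exactly the cyclic-order-preserving ones, which is the defining data for $T$. As before, the bijection on diagrams respects composition. \textbf{The main obstacle} in all three cases is the careful bookkeeping of which leaf bijections are realized by genuine graph isomorphisms of the expansions and how this corresponds, under the identification ``cells $=$ dyadic intervals/arcs,'' to the order-theoretic constraint defining $F$, $T$, or $V$; once that dictionary is pinned down, and once one invokes Proposition~\ref{prop:UniqueReduced} to get canonical (reduced) representatives on both sides, the verification that the resulting bijection is a homomorphism is routine from the composition rule for graph pair diagrams. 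I would also need to double-check the degenerate features of these particular systems (no interior vertex in $R$, initial and terminal vertices of $R$ not joined by an edge) so that the earlier results apply --- strictly these systems are borderline with respect to Definition~\ref{def:expanding}, so I would note explicitly how the limit space and cell structure are still as claimed, or invoke whatever mild extension of the expanding hypothesis the paper adopts for these base cases.
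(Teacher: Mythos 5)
Your overall strategy---identify the cells with standard dyadic pieces, identify graph pair diagrams with tree pair diagrams, and then determine which leaf bijections are realized by genuine isomorphisms of expansions---is the same in spirit as the paper's proof (which identifies the $F$ limit space with $[0,1]$, the cells with standard dyadic intervals, and the canonical homeomorphisms with orientation-preserving linear maps, and then says the arguments for $T$ and $V$ are similar). However, there is a genuine gap: you have misread the replacement systems in Table~\ref{tab:ThompsonGroups}, and the combinatorial core of your argument does not survive the correction. In the table, the rule for $F$ and $T$ replaces an edge by a \emph{directed path of length two}, so $R$ has an interior vertex; the rule for $V$ replaces an edge by \emph{two disjoint edges}, each joining a boundary vertex of $R$ to its own interior vertex. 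All three systems are honestly expanding in the sense of Definition~\ref{def:expanding}, so your closing worry that they are ``borderline'' is unfounded. By contrast, the rule you describe (two parallel edges from the initial to the terminal vertex, with no interior vertex) is \emph{not} expanding, and for it the gluing relation identifies any two sequences with the same first symbol, collapsing the limit space to a finite set; none of your assertions about the limit space, the Cantor set with marked points, or the interval with its dyadic structure actually follow from the setup you state---they are imported from knowing what the answer should be.

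More seriously, your mechanism for separating $F$ from $V$ fails under your own model: if the expansions were multigraphs with two vertices and many parallel edges, then \emph{every} bijection of edges would be a direction-preserving graph isomorphism, so ``$\varphi$ preserves directions'' would not force the order-preserving leaf bijection, and your Step~2 would prove the rearrangement group is $V$-like rather than $F$. The correct reason order is forced is that, with the path-of-length-two rule, every expansion of the one-edge base graph is a directed path (of length $2^n$ in the full expansion sequence), and a direction-preserving isomorphism between two directed paths is unique and order-preserving; likewise for $T$ the expansions are directed cycles, whose isomorphisms are exactly the rotations, giving cyclic-order-preserving leaf bijections; and for $V$ the expansions are disjoint unions of single directed edges, so every edge bijection is an isomorphism, while the gluing relation is trivial and the limit space is the Cantor set $\Omega$ (as noted in Remark~\ref{rem:generalizedthompsongroups}). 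With the replacement graphs corrected in this way, your dictionary between graph pair diagrams and tree pair diagrams does go through and essentially reproduces the paper's argument.
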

\begin{proof}Consider the given replacement system for~$F$.  Let $\mathsf{E}$ denote the edge of the base graph, and let $\mathsf{0}$ and $\mathsf{1}$ denote the edges of the replacement graph.

Each graph $G_n$ in the full expansion sequence for this replacement system is a directed path of length~$2^n$, as shown in Figure~\ref{fig:IntervalFullExpansions}. The gluing relation on the symbol space $\{\mathsf{E}\}\times \{\mathsf{0},\mathsf{1}\}^\infty$ is given by $e\mathsf{0}\overline{\mathsf{1}} \sim e\mathsf{1}\overline{\mathsf{0}}$ for any edge $e$ of~$G_n$, and it follows that the limit space $X$ is homeomorphic with the interval~$[0,1]$, with each point in the symbol space mapping to the point in $[0,1]$ whose binary expansion is the given binary sequence.
\begin{figure}
\centering
$\underset{\textstyle G_1}{\includegraphics{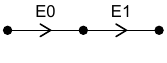}}
\qquad\qquad
\underset{\textstyle G_2}{\includegraphics{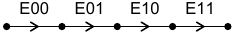}}$
\caption{Two graphs in the full expansion sequence for Thompson's group~$F$.}
\label{fig:IntervalFullExpansions}
\end{figure}%

Under this identification, the gluing vertices for $X$ correspond precisely to the dyadic fractions in $[0,1]$.  The cells in $X$ correspond to the standard dyadic intervals in $[0,1]$, i.e.~all intervals of the form $[(k-1)/2^n,k/2^n]$ for $n\in\mathbb{N}$ and $k\in\{1,\ldots,2^n\}$, and a cellular partition of $X$ is simply any subdivision of $[0,1]$ into standard dyadic intervals.  It is easy to check that the canonical homeomorphism between two standard dyadic intervals is orientation preserving and linear.  Thus, a homeomorphism $h\colon [0,1]\to[0,1]$ is a rearrangement if and only if there exist two partitions $\{I_1,\ldots,I_n\}$ and $\{I_1',\ldots,I_n'\}$ of $[0,1]$ into standard dyadic intervals such that $h$ maps each $I_k$ linearly to $I_k'$  in an orientation-preserving way.  Then the group of rearrangements is precisely Thompson's group~$F$.  Similar arguments hold for $T$ and $V$.
\end{proof}

\begin{table}
\centering
\renewcommand{\arraystretch}{1.25}
\newcommand{\mystrut}{\rule{0pt}{0.7in}}
\begin{tabular}{c@{\hspace{0.4in}}c@{\hspace{0.5in}}c}
\hline
Group & Base Graph & Replacement Rule \\
\hline
\raisebox{-0.5ex}{$F_{3,2}$} & \mystrut\vgraphics{TwoEdgeBaseGraph} & $\vgraphics{EdgeReplacement1Tall} \;\;\quad\longvarrow\;\;\quad \vgraphics{TripleEdgeReplacement}$ \\
\raisebox{-0.5ex}{$T_{3,2}$} & \mystrut\vgraphics{Circle2BaseGraph} & $\vgraphics{EdgeReplacement1Tall} \;\;\quad\longvarrow\;\;\quad \vgraphics{TripleEdgeReplacement}$ \\
\raisebox{-0.5ex}{$V_{3,2}$} & \mystrut\vgraphics{TwoDisjointEdgeBaseGraph} & $\vgraphics{EdgeReplacement1Tall} \;\;\quad\longvarrow\;\;\quad \vgraphics{TripleEdgeVReplacement}$  \\[0.55in]
\hline
\end{tabular}
\caption{Replacement systems for the generalized Thompson groups $F_{3,2}$, $T_{3,2}$, and $V_{3,2}$.}
\label{tab:GeneralizedThompsonGroups}
\end{table}
\begin{remark} \label{rem:generalizedthompsongroups}
The Thompson groups $F$, $T$, and $V$ belong to the families of \newword{generalized Thompson groups} $F_{n,k}$, $T_{n,k}$, and $V_{n,k}$, where $n$ and $k$ are positive integers. (See \cite{Bro}, where $V_{n,k}$ is denoted~$G_{n,k}$).  The Thompson groups themselves correspond to the case where $n=2$ and $k=1$.  These generalized Thompson groups can also be represented as rearrangement groups, as shown in Table~\ref{tab:GeneralizedThompsonGroups}.

Incidentally, note that the replacement system corresponding to $V_{n,k}$ has trivial gluing relation, so the limit space for $V_{n,k}$ is simply the symbol space $\Omega = \{1,\ldots,k\}\times\{1,\ldots,n\}^\infty$, with any bijection between the cells of two cellular partitions defining a rearrangement.  If $(G_0,e\to R)$ is any replacement system, where $G_0$ has $k$ edges and $R$ has $n$ edges, then the rearrangement group is isomorphic to the subgroup of $V_{n,k}$ consisting of all elements that preserve the corresponding gluing relation.  Since each of the groups $V_{n,k}$ embeds into Thompson's group~$V$, it follows that every rearrangement group embeds into Thompson's group~$V$.
\end{remark}

We now prove that a wide class of rearrangement groups contains copies of Thompson's group~$F$.

\begin{proposition}\label{prop:containsf} Let\/ $(G_0,e\to R)$ be a replacement system with limit space $X$, and suppose the initial vertex of $R$ is a source of degree one, and the terminal vertex of $R$ is a sink of degree one.  Then for any cell $C$ in $X$ the rearrangement group contains a copy of Thompson's group~$F$ that is supported on~$C$.
\end{proposition}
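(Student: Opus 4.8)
The plan is to exhibit an explicit copy of Thompson's group $F$ by finding, inside any cell $C = C(e)$, a ``standard dyadic interval structure'' built from subcells, on which we have full freedom to act. The key observation is that the hypothesis on $R$ — the initial vertex is a source of degree one and the terminal vertex is a sink of degree one — means that every cell $C(d)$ has exactly one ``incoming'' boundary point (the initial vertex of $d$) and one ``outgoing'' boundary point (the terminal vertex of $d$), and the unique edge of $R$ leaving the initial vertex, call it $\edgetwo_i$, together with the unique edge of $R$entering the terminal vertex, call it $\edgetwo_t$, let us isolate a chain of subcells inside $C(d)$ that are glued end-to-end exactly like two halves of an interval.

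First I would set up the combinatorics. Fix the cell $C = C(e)$. Consider the two subcells $C(e\edgetwo_i)$ and $C(e\edgetwo_t)$. Because $\edgetwo_i$ is the unique edge at the (degree-one source) initial vertex and $\edgetwo_t$ is the unique edge at the (degree-one sink) terminal vertex, the initial boundary point of $C(e\edgetwo_i)$ is the initial boundary point of $C(e)$, the terminal boundary point of $C(e\edgetwo_t)$ is the terminal boundary point of $C(e)$, and — using the gluing-vertex analysis from Proposition~\ref{prop:GluingRelation} together with expandingness — the terminal point of $C(e\edgetwo_i)$ and the initial point of $C(e\edgetwo_t)$ can be arranged to be glued vertices that realize the interior vertex chain of $R$. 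I would then iterate: inside $C(e\edgetwo_t)$ we again see a copy $C(e\edgetwo_t\edgetwo_t)$ attached to its terminal end, and so on, producing an infinite descending chain of subcells $C(e\edgetwo_t^{\,k})$ together with the ``heads'' $C(e\edgetwo_t^{\,k}\edgetwo_i)$ hanging off each stage, exactly mirroring the caret structure of a rooted binary tree / the standard dyadic subdivision of $[0,1]$. The canonical homeomorphisms between these subcells are the analogue of orientation-preserving linear maps between dyadic intervals.

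Next I would define the subgroup: let $H$ be the set of rearrangements $f$ of $X$ supported on $C$ (i.e.\ the identity outside $C$) such that the associated graph pair diagram, restricted to $C$, uses only subcells drawn from this ``dyadic'' family $\{C(e\,\edgetwo_t^{\,k}),\ C(e\,\edgetwo_t^{\,k}\edgetwo_i) : k\ge 0\}$ and their further dyadic subdivisions, and such that on each such subcell $f$ acts by the canonical homeomorphism to another such subcell in the order-preserving way (preserving which end is initial and which is terminal). One checks directly that $H$ is closed under composition and inverses — this is immediate from the same least-common-refinement argument used in the proof that rearrangements form a group, since the dyadic family is closed under the relevant refinements. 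The map sending a dyadic subdivision of $C$ (into subcells from this family) to the corresponding standard dyadic subdivision of $[0,1]$ sets up an isomorphism between $H$ and the group of piecewise-linear orientation-preserving dyadic homeomorphisms of $[0,1]$ fixing the endpoints — which is exactly Thompson's group $F$ by the description recalled in the proof of Proposition~\ref{prop:ThompsonsGroups}.

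The main obstacle I anticipate is the verification that the two subcells $C(e\edgetwo_i)$ and $C(e\edgetwo_t)$ (and their iterates) actually meet in a single point and nowhere else — i.e.\ that this chain of subcells really is glued like an interval rather than sharing extra gluing vertices or overlapping in their interiors. Disjointness of interiors is free from Proposition~\ref{prop:contain}(4) since neither address is a prefix of the other; the real content is showing the intersection is exactly one gluing vertex. For this I would argue that the terminal vertex of $\edgetwo_i$ and the initial vertex of $\edgetwo_t$ in $R$ are connected through interior vertices of $R$ (using that $R$ has no isolated vertices and the boundary vertices are not joined by an edge), so that the corresponding points of $\Omega$ get glued, while expandingness (initial and terminal vertices of $R$ not adjacent, at least three vertices) prevents any spurious identification of their other endpoints with each other. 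Once this ``interval-like'' gluing of the chain is nailed down, the rest — that the canonical homeomorphisms between the dyadic subcells behave like affine maps of $[0,1]$, and that $H \cong F$ — follows the template of Proposition~\ref{prop:ThompsonsGroups} essentially verbatim, and extending the construction to an arbitrary cell $C$ rather than a single-edge base graph requires no new ideas since everything takes place inside $C$.
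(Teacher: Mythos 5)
There is a genuine gap: your whole construction rests on the claim that $C(e\iota)$ and $C(e\tau)$ (writing $\iota$, $\tau$ for the edges of $R$ at the initial and terminal vertices) meet in a single gluing vertex, so that the chain $C(e\tau^{\,k})$, $C(e\tau^{\,k}\iota)$ is glued end-to-end like an interval. That is false in general under the stated hypotheses. Two points of $\Omega$ are glued only if the corresponding edges share a vertex in \emph{every} $G_n$, not merely if they are joined by a path through interior vertices of $R$; your proposed argument conflates these. The Vicsek replacement system is a concrete counterexample: there $\iota=\mathsf{0}$ and $\tau=\mathsf{3}$ do not share a vertex of $R$, so $C(e\mathsf{0})$ and $C(e\mathsf{3})$ are disjoint, and the image of $\{e\}\times\{\iota,\tau\}^\infty$ in $X$ is a Cantor set, not an arc. (The paper notes after its proof that one gets an arc only in the special case where $\iota$ and $\tau$ share a vertex, e.g.\ the rabbit family.) A second, related problem is your definition of $H$: when $R$ has more than two edges, the ``dyadic'' family $\{C(e\tau^{\,k}),\,C(e\tau^{\,k}\iota)\}$ and its refinements do not cover $C$, so no cellular partition of $X$ can be drawn only from that family; any rearrangement supported on $C$ must also be told what to do with the subcells $C(e\tau^{\,k}\edgetwo)$ for the remaining edges $\edgetwo$ of $R$, and without that specification $H$ is not well defined (or is trivial).

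The paper's proof avoids both issues by being explicitly combinatorial rather than topological: for each edge $e$ it writes down a rearrangement $r_e$ by a graph pair diagram which, besides performing the basic ``$x_0$-like'' move on the addresses in $\{\iota,\tau\}^*$, shifts the leftover subcells along via $e\iota\edgetwo\mapsto e\edgetwo$ and $e\edgetwo\mapsto e\tau\edgetwo$. It then checks that $r_e$ and $r_{e\tau}$ satisfy the defining relations of $F$ in the standard two-generator presentation, so there is an epimorphism $F\to\langle r_e,r_{e\tau}\rangle$, and concludes injectivity from the fact that every proper quotient of $F$ is abelian together with an explicit computation showing $r_e$ and $r_{e\tau}$ do not commute (they send $C(e\iota\tau\tau)$ to different cells). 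If you want to salvage your approach, you would have to replace the interval picture by the action on the Cantor set $\{e\}\times\{\iota,\tau\}^\infty$, define the extension of each element to the rest of $C$ explicitly (as the paper does), and supply an injectivity argument that does not presuppose an arc; at that point you have essentially reconstructed the paper's proof.
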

\begin{proof}Let $\iota$ and $\tau$ be the edges of $R$ incident on the initial and terminal vertices, respectively, and let $\{G_n\}$ be the full expansion sequence.  Given any edge $e$ in~$G_n$, let $r_e$ be the rearrangement with graph pair diagram
\[
(G_n\exp e\exp e\iota,\,G_n\exp e\exp e\tau,\,\varphi),
\]
where $\varphi$ is the graph isomorphism defined as follows:
\begin{enumerate}
\item $\varphi$ acts as the identity on edges of $E(G_n) - \{e\}$.
\item $\varphi(e\iota\iota) = e\iota$, $\varphi(e\iota\tau) = e\tau\iota$, and $\varphi(e\tau) = e\tau\tau$.
\item $\varphi(e\iota\zeta) = e\zeta$ and $\varphi(e\zeta) = e\tau\zeta$.
\end{enumerate}
\begin{figure}
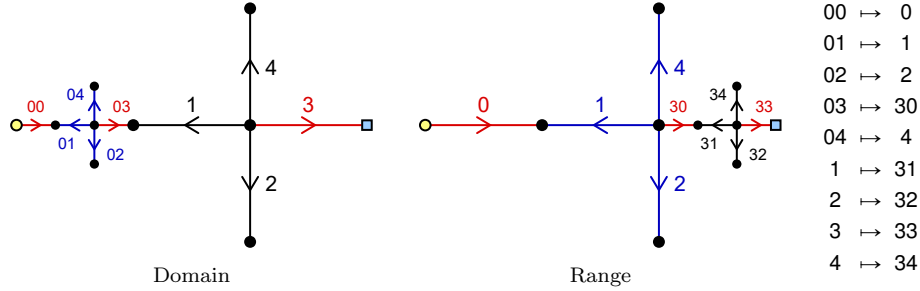

\centering
\footnotesize
$
\renewcommand{\arraystretch}{1.25}
\underset{\textstyle\text{Domain\rule{0pt}{10pt}}}{\vgraphics{StructureGraphVicsekFCopy1}}
\hfill\underset{\textstyle\text{Range\rule{0pt}{10pt}}}{\vgraphics{StructureGraphVicsekFCopy2}}
\hfill
\scriptstyle\raisebox{-2ex}{$\begin{array}{@{}c@{\;\;\mapsto\;\;}c@{}}
\textsf{00} & \textsf{0} \\
\textsf{01} & \textsf{1} \\
\textsf{02} & \textsf{2} \\
\textsf{03} & \textsf{30} \\
\textsf{04} & \textsf{4} \\
\textsf{1} & \textsf{31} \\
\textsf{2} & \textsf{32} \\
\textsf{3} & \textsf{33} \\
\textsf{4} & \textsf{34}
\end{array}$}$
\caption{The action of $r_e$ on a cell $C(e)$ for the Vicsek fractal, where $\iota = \mathsf{0}$ and $\tau = \mathsf{3}$.  The mappings $e\iota\iota \mapsto e\iota$, $e\iota\tau \mapsto e\tau\iota$, and $e\tau\mapsto e\tau\tau$ are shown in red.}
\label{fig:VicsekFElement}
\end{figure}
For example, Figure~\ref{fig:VicsekFElement} shows the action of $r_e$ on a cell $C(e)$ of the Vicsek fractal, where $\iota = \mathsf{0}$ and $\tau = \mathsf{3}$. We claim that, for any edge $e$ in $G_n$, the rearrangements $r_{e}$ and $r_{e\tau}$ generate a copy of Thompson's group~$F$.

To prove this, recall first that $F$ is given by the presentation
\[
\langle x_0,x_1 \mid x_1x_2=x_3x_1,x_1x_3=x_4x_1\rangle,
\]
where $x_k = x_0^{k-1}x_1x_0^{1-k}$ for $k \geq 2$.  It is easy to check that $r_e$ and $r_{e\tau}$ satisfy these relations.  In particular, $r_e^{k-1}r_{e\tau}r_e^{1-k} = r_{e\tau^k}$ for $k\geq 2$, and these rearrangements satisfy
\[
r_{e\tau} r_{e\tau^2} = r_{e\tau^3} r_{e\tau} \qquad\text{and}\qquad r_{e\tau} r_{e\tau^3} = r_{e\tau^4} r_{e\tau}.
\]
Thus we get a well defined epimorphism $\pi\colon F \to \langle r_e,r_{e\tau}\rangle$.  Since every proper quotient of $F$ is abelian, we can show that $\pi$ is an isomorphism by showing that $r_e$ and $r_{e\tau}$ do not commute.  But $r_er_{e\tau}$ maps the cell $C(e\iota\tau\tau)$ canonically to $C(e\tau\iota\tau)$, and $r_{e\tau}r_e$ maps $C(e\iota\tau\tau)$ canonically to $C(e\tau\tau\iota)$, so these cannot be the same rearrangement.
\end{proof}

Of the examples in Subection~\ref{subsec:examples}, this proposition shows that the basilica Thompson group, all rabbit family rearrangement groups, and all Vicsek family rearrangement groups contain copies of Thompson's group~$F$.  It also follows from this proposition that all of the generalized Thompson groups given in Table~\ref{tab:GeneralizedThompsonGroups} contain a copy of Thompson's group~$F$, though this is well-known.

Note that the Thompson group $\langle r_e,r_{e\tau}\rangle$ defined in the proof of Proposition~\ref{prop:containsf} acts on the Cantor space $\{e\} \times \{\iota,\tau\}^\infty \subset \Omega$ in precisely the same way that Thompson's group $F$ acts on the standard Cantor set $\{0,1\}^\infty$.  In the special case where $\iota$ and $\tau$ share a vertex in~$R$ (e.g.~for the rabbit family), the image of $\{e\} \times \{\iota,\tau\}^\infty$ in the limit space is actually an arc, and the action of $\langle r_e,r_{e\tau}\rangle$ on this arc is conjugate to the action of $F$ on~$[0,1]$.

\subsection{Finite Subgroups}
\label{subsec:finite}

In this subsection, we provide a general characterization of finite subgroups of rearrangement groups.  As an application, we show that the Vicsek rearrangement group is not isomorphic to any generalized Thompson group.

Let $\R = (G_0,e\to R)$ be an expanding replacement system, let $X$ be the corresponding limit space, and let $\G$ be the group of rearrangements of~$X$.  Given an expansion $E$ of~$G_0$, let $\mathrm{Aut}_{\mathcal{R}}(E)$ denote the subgroup of $\G$ consisting of all rearrangements having a graph pair diagrams of the form $(E,E,\varphi)$, where $\varphi$ is an automorphism of~$E$.  Note then that $\mathrm{Aut}_{\R}(E)$ is isomorphic to the group of automorphisms of the directed graph~$E$.

\begin{theorem}\label{thm:FiniteSubgroups}Every finite subgroup of $\G$ is contained in some\/ $\mathrm{Aut}_{\R}(E)$.
\end{theorem}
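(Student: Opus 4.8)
The plan is to show that a finite subgroup $H \leq \G$ admits a common ``stabilized'' expansion on which every element of $H$ acts by a graph automorphism. The starting point is the observation that each $h \in H$ has a unique reduced graph pair diagram $(E_h, E_h', \varphi_h)$ by Proposition~\ref{prop:UniqueReduced}. The key idea is that for a finite group, we can find a single expansion $E$ of $G_0$ that is simultaneously large enough to serve as a domain graph for \emph{every} element of $H$, and which is moreover invariant under the action of $H$ in the appropriate sense. The main technical device is that any two expansions of $G_0$ have a common refinement (i.e., a common further expansion), since expansions are partially ordered by the ``is an expansion of'' relation and this poset is directed --- given expansions $E_1$ and $E_2$, one can expand the edges of $E_1$ and $E_2$ along the tree of cells until they agree, using the fact (from the remark after Proposition~\ref{prop:UniqueReduced}) that cellular partitions correspond to finite rooted subtrees of the tree of cells, and the union of two finite rooted subtrees is again one.

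First I would fix, for each $h \in H$, a domain expansion $E_h$ such that $h$ maps each cell $C(e)$ with $e \in E(E_h)$ canonically to another cell; equivalently $h$ is regular on each such cell. Then I would pass to a common expansion $E_0$ refining all the $E_h$; since $h$ is regular on every cell of $E_h$, it is regular on every cell of $E_0$ (refining only subdivides cells, and canonical homeomorphisms restrict to subcells), so $E_0$ is a valid domain graph for every element of $H$. Next comes the crucial averaging step: I would replace $E_0$ by the common refinement $E$ of the finite family $\{\, h(E_0) : h \in H \,\}$ of expansions, where $h(E_0)$ denotes the expansion whose edges are the cells $\{h(C(e)) : e \in E(E_0)\}$ (this is an expansion because $h$ maps $E_0$ isomorphically to it, as noted before the definition of graph pair diagram). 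Because $H$ is a group and is finite, the family $\{h(E_0)\}_{h\in H}$ is permuted by every element of $H$: for $g \in H$, the set $\{g(h(E_0)) : h \in H\} = \{(gh)(E_0) : h \in H\}$ is the same family. Since taking the common refinement of a finite family of expansions is canonical (it corresponds to the union of the associated finite rooted subtrees, which is symmetric in its inputs), each $g \in H$ maps $E$ to itself; that is, $g$ carries the cellular partition $\{C(e) : e \in E(E)\}$ to itself, permuting its cells. As $g$ restricts to a canonical homeomorphism on each cell of this partition (regularity is inherited by the refinement $E$), the map $e \mapsto (\text{the edge of } E \text{ whose cell is } g(C(e)))$ is an automorphism $\varphi_g$ of the directed graph $E$, and $(E,E,\varphi_g)$ is a graph pair diagram for $g$. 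Hence $g \in \mathrm{Aut}_{\R}(E)$ for all $g \in H$, so $H \leq \mathrm{Aut}_{\R}(E)$.

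The step I expect to be the main obstacle is establishing cleanly that the common refinement operation on expansions is well-defined and $H$-equivariant in the sense needed --- i.e., that ``refine the family $\{h(E_0)\}$'' genuinely produces an $H$-invariant expansion rather than something that merely contains one. This hinges on two facts that need to be stated carefully: (i) that an expansion $E'$ refining $E$ is still a valid domain graph for any rearrangement that was regular on all cells of $E$, which follows from the closure of canonical homeomorphisms under restriction to subcells together with Proposition~\ref{prop:contain}; and (ii) that the image $h(E_0)$ is literally an expansion of $G_0$ (not just abstractly isomorphic to one) --- this is exactly the content of the sentence preceding the definition of graph pair diagram, that a rearrangement mapping cells of one partition canonically to cells of another induces an isomorphism of the corresponding expansions, so $h(E_0)$ is the expansion corresponding to the cellular partition $h(\mathcal P_{E_0})$. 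Once these are in hand, the finiteness of $H$ is used only to guarantee that the family $\{h(E_0)\}_{h \in H}$ is finite, so that its common refinement exists as an expansion; everything else is formal.
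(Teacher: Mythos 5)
Your proposal is correct and follows essentially the same route as the paper: first take a common refinement of domain partitions/expansions so that every element of $H$ is regular on each cell, then symmetrize by taking the least common refinement of the $H$-images, which is $H$-invariant by symmetry and still refines the original partition (since $\mathrm{id}\in H$), yielding a single expansion $E$ with $H\leq \mathrm{Aut}_{\R}(E)$. The points you flag as needing care (images of partitions are again partitions/expansions, and regularity passes to refinements) are exactly the facts the paper's shorter proof relies on implicitly.
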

\begin{proof}Let $H$ be a finite subgroup of $\G$.  For each $h\in H$, let $\mathcal{P}_h$ be a cellular partition of $X$ such that $h$ restricts to a canonical homeomorphism on each cell of $\mathcal{P}_h$.  Let $\mathcal{P}$ be the least common refinement of the partitions $\{\mathcal{P}_h\mid h\in H\}$, i.e.~the set of all minimal cells of the union $\bigcup_{h\in H} \mathcal{P}_h$.  Then each $h\in H$ restricts to a canonical homeomorphism on each cell of~$\mathcal{P}$.

For each $h\in H$, let $h(\mathcal{P})$ denote the image of $\mathcal{P}$ under $h$,
\[
h(\mathcal{P}) = \{h(C) \mid C\in\mathcal{P}\},
\]
and let $\mathcal{P}'$ be the least common refinement of the partitions $\{h(\mathcal{P}) \mid h\in H\}$.  By symmetry, $h(\mathcal{P}') = \mathcal{P}'$ for each $h\in H$.  Moreover, since $\mathcal{P}'$ is a refinement of~$\mathcal{P}$, each $h\in H$ restricts to a canonical homeomorphism on each cell of $\mathcal{P}'$. Then $H$ is a subgroup of $\mathrm{Aut}_{\R}(E)$, where $E$ is the expansion of $G_0$ corresponding to~$\mathcal{P}'$.
\end{proof}

As an application we classify the finite subgroups of the rearrangement group of the Vicsek fractal.

\begin{proposition}\label{prop:FiniteSubgroupsVicsek}Let $\G$ be the rearrangement group for the Vicsek fractal, and let $H$ be a finite group.  Then $H$ is isomorphic to a subgroup of $\G$ if and only if $H$ is solvable of order\/ $2^j 3^k$ for some $j,k\geq 0$.
\end{proposition}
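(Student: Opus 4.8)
The plan is to handle the ``only if'' direction via Theorem~\ref{thm:FiniteSubgroups}, reducing it to a structural analysis of the automorphism groups of expansions of the Vicsek base graph, and to handle the ``if'' direction by building the required finite groups recursively out of the symmetry of the replacement graph $R$.

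For the forward direction, suppose $H\le\G$ is finite. By Theorem~\ref{thm:FiniteSubgroups} I may take $H\le\mathrm{Aut}_{\R}(E)\cong\mathrm{Aut}(E)$ for some expansion $E$ of the base graph $G_0$, so it suffices to show that every such $\mathrm{Aut}(E)$ is solvable of order $2^{j}3^{k}$. First I would record two structural facts. Since $R$ is connected and acyclic, every expansion $E$ is a finite tree; and the central vertex $\rho$ of $G_0$ lies in every expansion, is there the unique vertex of in-degree zero (every vertex interior to a copy of $R$ inherits an incoming edge, and the leaves of $G_0$ are sinks), and is hence fixed by every automorphism --- so $E$ may be rooted at $\rho$. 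With this rooting, every vertex of $E$ has at most four children: $\rho$ has degree four, and any other vertex has degree at most $5$ in $E$ (the maximum of $4$ and the largest degree of a vertex of $R$) with one incident edge pointing toward $\rho$. Then I would compute $\mathrm{Aut}(E)$ recursively: grouping the children-subtrees of $\rho$ into isomorphism classes of sizes $a_{1},\dots,a_{r}$ with $\sum_{i}a_{i}\le 4$ gives $\mathrm{Aut}(E)\cong\prod_{i}\bigl(\mathrm{Aut}(T_{i})\wr S_{a_{i}}\bigr)$, so by induction on the depth of $E$ the group $\mathrm{Aut}(E)$ is built from copies of $S_{1},S_{2},S_{3},S_{4}$ by iterated wreath products and direct products. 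Since each such symmetric group is solvable of order of the form $2^{p}3^{q}$, and both of these properties pass to direct products, wreath products, and subgroups, $H$ is solvable of order $2^{j}3^{k}$ --- solvability being in any case automatic by Burnside's $p^{a}q^{b}$ theorem.

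For the converse, let $H$ be solvable of order $2^{j}3^{k}$. Applying the Kaloujnine--Krasner embedding theorem to a composition series of $H$ embeds $H$ into an iterated wreath product of cyclic groups of orders $2$ and $3$, so it suffices to realize every such iterated wreath product inside $\G$. Here I would use the self-similar cell structure. From the Vicsek replacement rule (Figure~\ref{fig:VicsekReplacement}) one reads off that $R$ has three edges, say $\mathsf{1},\mathsf{2},\mathsf{4}$, that are freely permuted by the group $\mathrm{Aut}(R;v,w)\cong S_{3}$ of automorphisms of $R$ fixing its two boundary vertices. Hence for any edge $e$, any expansion $E$ having $e\mathsf{0},\dots,e\mathsf{4}$ among its edges, and any $\sigma\in S_{3}$, the element of $\mathrm{Aut}_{\R}(E)$ induced by the corresponding automorphism of the copy of $R$ replacing $e$ (extended by the identity) is a rearrangement supported on $C(e)$ that permutes the subcells $C(e\mathsf{1}),C(e\mathsf{2}),C(e\mathsf{4})$ according to $\sigma$ and fixes everything else. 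Fixing a cell $C(e_0)$ and iterating this construction $m$ levels deep --- at level one a copy of $C_{p_{1}}\le S_{3}$ permuting $p_{1}$ of the subcells $C(e_0\mathsf{1}),C(e_0\mathsf{2}),C(e_0\mathsf{4})$, inside each active subcell a copy of $C_{p_{2}}$ acting on $p_{2}$ of its three analogous subcells, and so on, always transporting the construction between subcells of a given level by the canonical homeomorphisms --- generates a subgroup of $\G$ isomorphic to the iterated wreath product $C_{p_{1}}\wr\cdots\wr C_{p_{m}}$, and hence a copy of $H$.

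The hard part is the forward direction, and within it the structural claim that no symmetric group $S_{n}$ with $n\ge 5$ ever arises as a local factor of some $\mathrm{Aut}(E)$ --- equivalently, that in every expansion each vertex has at most four children. This is precisely where the particular combinatorics of the Vicsek rule is used (that $R$ has bounded degree and only a single ``through'' path between its boundary vertices), and it is exactly what fails for the replacement systems realizing Thompson's group $V$, where arbitrary finite symmetric groups --- hence non-solvable finite groups of unbounded order --- occur as rearrangements. Once that bound is established, the forward direction reduces to routine bookkeeping with wreath products, and the backward direction to assembling the Kaloujnine--Krasner embedding with the recursive cell construction above.
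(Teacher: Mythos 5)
Your overall plan for the forward direction matches the paper's (reduce to $\mathrm{Aut}_{\R}(E)$ via Theorem~\ref{thm:FiniteSubgroups}, then use that expansions of the Vicsek base graph are trees of bounded degree), but the step you use to set up the recursion is wrong. The central vertex $\rho$ of $G_0$ is \emph{not} the unique vertex of in-degree zero in an expansion: every inserted copy of the replacement graph $R$ contributes a new source, namely the image of the degree-four interior vertex of $R$, which has in-degree zero. Worse, $\rho$ need not be fixed by automorphisms at all: in the simple expansion $G_0\exp\mathsf{T}$ the old center and the new source $\mathsf{T}c$ each have three leaf out-neighbours and one common out-neighbour of degree two, and there is a direction-preserving automorphism swapping them. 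So you cannot root the tree at $\rho$ and run the children-subtree recursion as stated. The conclusion is still correct and the fix is standard: root instead at the center of the tree (a vertex or an edge, which is automorphism-invariant), or simply argue, as the paper does, that the automorphism group of any tree of maximum degree four is assembled from subgroups of $S_4$ by iterated wreath and direct products, hence is solvable of order $2^j3^k$. (Also, your degree bound of $5$ is an overestimate: replacing an edge does not change the degrees of its endpoints, so every vertex of every expansion has degree at most four, as recorded in Example~\ref{ex:VicsekGraphFamily}.)

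The backward direction has a more serious error: the claim that the boundary-fixing automorphism group $\mathrm{Aut}(R;v,w)$ is an $S_3$ freely permuting the edges $\mathsf{1},\mathsf{2},\mathsf{4}$ is false. Any automorphism of $R$ fixing the initial vertex must fix $\mathsf{0}$, hence the common head of $\mathsf{0}$ and $\mathsf{1}$, hence $\mathsf{1}$; fixing the terminal vertex also fixes $\mathsf{3}$; so the boundary-fixing group is only the transposition of $\mathsf{2}$ and $\mathsf{4}$. Equivalently, no rearrangement supported on $C(e)$ can carry $C(e\mathsf{1})$ to $C(e\mathsf{2})$, because $C(e\mathsf{1})$ shares a boundary point with $C(e\mathsf{0})$ while $C(e\mathsf{2})$ does not. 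With only a $C_2$ available at each level, your nested construction produces only $2$-groups, so the order-$3$ composition factors of $H$ are never realized and the Kaloujnine--Krasner reduction has nothing to land on. The order-$3$ symmetry appears one level differently than you describe, and exploiting it leads essentially to the paper's proof: if $e$ is a \emph{leaf} edge (its terminal vertex has degree one), then after expanding $e$ the three subcells $C(e\mathsf{2})$, $C(e\mathsf{3})$, $C(e\mathsf{4})$ are indexed by leaf edges and are permuted by a full $S_3$ supported on $C(e)$; iterating this over all leaf edges yields the expansions $E_n$ whose automorphism groups are those of the rooted tree with quaternary root and ternary branching, i.e.\ iterated wreath products $S_3\wr\cdots\wr S_3\wr S_4$. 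Into these one can embed any solvable group of order $2^j3^k$, either by Kaloujnine--Krasner as you propose or, as the paper does more directly, by letting $H$ act faithfully on the coset tree of a composition series (all indices being $2$ or $3$) and extending that action to the full tree.
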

\begin{proof}Note that every expansion $E$ of the base graph for the Vicsek replacement system is a directed tree in which every vertex has degree four or less, and it is easy to show that the automorphism group of such a tree is solvable and has order $2^j 3^k$ for some~$j,k$.

To show that any solvable group of order $2^j 3^k$ is possible, we define a sequence $\{E_0\}_{n=0}^\infty$ of expansions of $G_0$ recursively as follows.  Let $E_0 =G_0$, and for each $n\geq 1$ let $E_n$ be the expansion of $E_{n-1}$ obtained by expanding all of the leaf edges.  Then the automorphism group of $E_n$ is isomorphic to the automorphism group of a complete rooted trinary tree $T_n$ of depth $n+1$ whose root has four children.  But it is easy to see that any solvable group $H$ of order $2^j3^k$ acts faithfully on some~$T_n$.  For example, if
\[
1=H_0 \lhd H_1 \lhd \cdots \lhd H_n = H
\]
is a composition series for $H$, then each $H_{i-1}$ has index 2 or 3 in~$H_i$, so each node in the tree of left cosets of the~$H_i$'s has either two or three children.  The group $H$ acts faithfully on this tree of cosets (since the leaves are the elements of the group), and this can easily be extended to a faithful action of $H$ on~$T_n$.
\end{proof}

\begin{corollary} \label{cor:Vicsek} If $g$ is a rearrangement of the Vicsek fractal of finite order, then\/ $|g| = 2^j3^k$ for some $j,k\in\mathbb{N}$.  Every such order is possible.\hfill\qedsymbol
\end{corollary}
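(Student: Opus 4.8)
The plan is to read this off directly from Proposition~\ref{prop:FiniteSubgroupsVicsek}. Suppose $g$ is a rearrangement of the Vicsek fractal with $|g| < \infty$. Then the cyclic group $\langle g\rangle$ is a finite subgroup of $\G$. Since $\langle g\rangle$ is abelian, it is solvable, so Proposition~\ref{prop:FiniteSubgroupsVicsek} applies and gives $|\langle g\rangle| = 2^j 3^k$ for some $j,k\geq 0$. As $|g| = |\langle g\rangle|$, this establishes the first assertion.

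For the converse, fix any $j,k\in\mathbb{N}$. The cyclic group $\mathbb{Z}/2^j 3^k\mathbb{Z}$ is solvable of order $2^j 3^k$, so Proposition~\ref{prop:FiniteSubgroupsVicsek} provides an embedding $\mathbb{Z}/2^j 3^k\mathbb{Z}\hookrightarrow\G$. The image of a generator is then a rearrangement of order exactly $2^j 3^k$, so every such order occurs.

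I do not anticipate any real obstacle: all of the content is already packaged in Proposition~\ref{prop:FiniteSubgroupsVicsek}, and the only point that needs to be noted explicitly is that a cyclic group is solvable, which is what licenses the appeal to that proposition. If one wanted a more concrete witness for the converse, one could instead trace through the construction in the proof of Proposition~\ref{prop:FiniteSubgroupsVicsek} and exhibit $g$ as a single automorphism of one of the expansions $E_n$ there (for instance, a product of disjoint ``rotations'' of subtrees of orders $2$ and $3$), but this refinement is not required for the statement.
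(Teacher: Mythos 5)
Your proposal is correct and follows exactly the route the paper intends: the corollary is stated with a \qedsymbol{} as an immediate consequence of Proposition~\ref{prop:FiniteSubgroupsVicsek}, and your argument (a finite-order element generates a finite cyclic, hence solvable, subgroup, so its order is $2^j3^k$; conversely any cyclic group of order $2^j3^k$ is solvable and so embeds, its generator realizing that order) is precisely the deduction being left to the reader. No gaps.
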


It follows that the rearrangement group of the Vicsek fractal is not isomorphic to $F$, $T$, or $V$, or indeed any other previously known Thompson-like group that the authors are aware of.

Similar methods can be used to show that various rearrangement groups are not isomorphic to one another.  For example, if $n\geq 5$ then the $n$th Vicsek rearrangement group (see Example \ref{ex:VicsekFamily}) and the $(n+1)\text{-st}$ rabbit rearrangement group (see Example \ref{ex:rabbit}) both contain the alternating group $A_n$ but not $A_{n+1}$.  It follows that all of the Vicsek groups are distinct from one another, as are all of the rabbit groups, and none of these groups are isomorphic to a previously known Thompson-like group.  The groups in the Vicsek and rabbit families are also distinct from one another, since each rabbit group has elements of every order, but no group from the Vicsek family has this property.

\subsection{Colored Replacement Systems}
\label{subsec:colors}

In this subsection, we examine the generalization of replacement systems obtained by coloring the edges of the base graph and replacement graphs; in this instance we will allow a different replacement graph for each color.  This generalization allows us to construct rearrangement groups for a wider variety of fractals, and our primary example of such a fractal given in this section is the airplane Julia set, see Example \ref{ex:airplane}.  Additionally, rearrangement groups of colored replacement systems generalize a certain class of diagram group, see Example~\ref{ex:diagramgroups}.

\begin{definition}A \newword{colored replacement system} consists of the following data:
\begin{enumerate}
\item A finite set $C$ of \newword{colors}.
\item A directed base graph $G_0$, whose edges have been colored by the elements of~$C$.
\item For each $c\in C$, a directed replacement graph $R_c$, whose edges have been colored by elements of~$C$.
\end{enumerate}
Each replacement graph $R_c$ has distinguished initial and terminal vertices.
\end{definition}

For a colored replacement system, we always replace a colored edge $e$ with color $c$ by the corresponding replacement graph~$R_c$.

For such a replacement system, the symbol space $\Omega$ can be defined in an obvious way, and it inherits a topology as a closed subspace of the Cantor space $E(G_0) \times \bigl(\bigcup_{c\in C} E(R_c)\bigr)^\infty$.  Assuming the base graph $G_0$ and each of the replacement graphs $R_c$ satisfy the requirements for an expanding replacement system (see Definition~\ref{def:expanding}), the gluing relation $\sim$ (defined as in Definition~\ref{def:LimitSpace}) is an equivalence relation, and the \newword{limit space} $X=\Omega/{\sim}$ is compact and metrizable.

For a colored replacement system $\R$, each cell $C(e)$ in the corresponding limit space has a color, namely the color of the edge $e$, and it only make sense to talk about the canonical homeomorphism between cells of the same color.  With this caveat, rearrangements can be defined as in Definition~\ref{def:Rearrangement}.  Each such rearrangement has a graph pair diagram of the form $(E_1,E_2,\varphi)$, where $E_1$ and $E_2$ are colored expansions of the base graph~$G_0$, and $\varphi\colon E_1\to E_2$ is a color-preserving isomorphism.

For simplicity, in upcoming sections we will focus our theoretical development on monochromatic rearrangement groups.  However, the constructions in Section~\ref{sec:complexes} and analysis of finiteness properties given in Section~\ref{sec:finiteness} carry over almost verbatim to the colored case.

\begin{example}[The Airplane] \label{ex:airplane} Let $C = \{\mathrm{red},\mathrm{blue}\}$, and consider the two replacement rules shown in Figure~\ref{fig:AirplaneReplacementRules}.  Let $\R$ be the replacement system based on these colors and rules, using a single blue edge as the base graph.  The first few stages of the full expansion sequence for $\R$ are shown in Figure~\ref{fig:AirplaneFullExpansions}.
\begin{figure}[p]
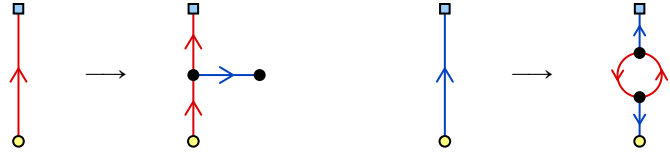

\centering
$\vgraphics{ColoredReplacementRed} \qquad\longvarrow\qquad \vgraphics{AirplaneReplacement1}
\qquad\qquad\qquad \vgraphics{ColoredReplacementBlue} \qquad\longvarrow\qquad \vgraphics{AirplaneReplacement2}$
\caption{Replacement rules for the airplane replacement system.}
\label{fig:AirplaneReplacementRules}
\end{figure}
\begin{figure}
\centering
$\underset{\textstyle G_1 \rule{0pt}{12pt}}{\vgraphics{AirplaneStage1}}
\hfill
\underset{\textstyle G_2 \rule{0pt}{12pt}}{\vgraphics{AirplaneStage2}}
\hfill
\underset{\textstyle G_3 \rule{0pt}{12pt}}{\vgraphics{AirplaneStage3}}$
\caption{Three graphs from the full expansion sequence for the airplane replacement system.  For clarity, we have not drawn vertices or arrows on~$G_3$.}
\label{fig:AirplaneFullExpansions}
\end{figure}
\begin{figure}
\centering
\includegraphics[trim={0 3pt 0 3pt},clip]{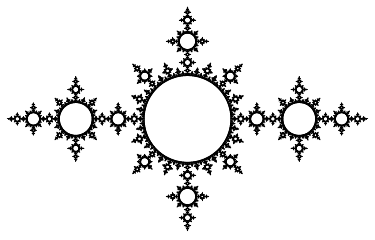}
\vspace{-1ex}
\caption{The limit space for the airplane replacement system.  This fractal is homeomorphic to the airplane Julia set.}
\label{fig:AirplaneFractal}
\end{figure}

Figure~\ref{fig:AirplaneFractal} shows the resulting limit space.  This fractal is homeomorphic to the Julia set for $z^2 - 1.755$, which is known as the \newword{airplane}. As with the basilica and the rabbits, there is an entire interior component of the Mandelbrot set whose corresponding Julia sets have the structure of the airplane, as shown in Figure~\ref{fig:AirplaneBulb}.
\begin{figure}
\centering
\includegraphics{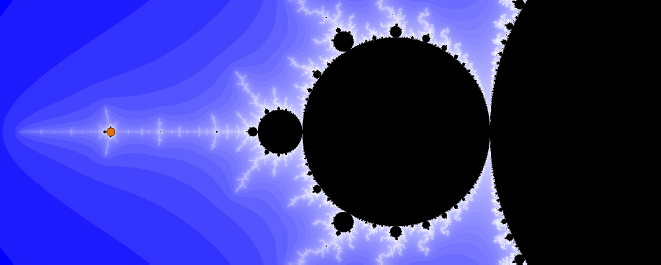}
\vspace{-1ex}\caption{The interior component of the Mandelbrot set that corresponds to the airplane, where the black region on the very right is the main cardioid.\label{fig:AirplaneBulb}}
\end{figure}

Though we will not prove it here, the group of rearrangements for the airplane has type~$F_\infty$.
\end{example}

\begin{example}[Diagram Groups] \label{ex:diagramgroups} A colored replacement system is \newword{linear} if the base graph $G_0$ is a directed path, and the replacement graph $R_c$ for each color is a directed path of length two or greater from the initial vertex to the terminal vertex.  Such a replacement system always has a limit space homeomorphic to a closed interval.

\begin{figure}
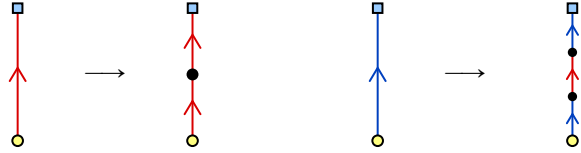

\centering
$\vgraphics{ColoredReplacementRed} \qquad\longvarrow\qquad \vgraphics{DiagramGroupReplacement1}
\qquad\qquad\qquad \vgraphics{ColoredReplacementBlue} \qquad\longvarrow\qquad \vgraphics{DiagramGroupReplacement2}$
\caption{Replacement rules for a linear colored replacement system.}
\label{fig:LinearReplacementRules}
\end{figure}
For example, consider the pair of replacement rules shown in Figure~\ref{fig:LinearReplacementRules}, involving two colors red and blue. Let $\R$ be the replacement system based on these rules, having a single blue edge as its base graph.  Then the corresponding rearrangement group $\G$ acts on a closed interval.  Algebraically, $\G$ is isomorphic to the restricted wreath product~$F\wr F$, where the wreath product is defined using the action of Thompson's group $F$ on the dyadic points in~$(0,1)$.  It is not hard to show that this group has type~$F_\infty$.

Guba and Sapir defined the class of \newword{diagram groups} associated to semigroup presentations~\cite{GuSa}.  The rearrangement group corresponding to a linear colored replacement system is always isomorphic to a diagram group, where the replacement rules determine the presentation of the corresponding semigroup. For example, the rearrangement group $\G$ describe above is isomorphic to a diagram group over the semigroup presentation $\langle R,B \mid R=R^2, B=BRB\rangle$. The $\CAT(0)$ complex that we construct for $\G$ in Section~\ref{sec:complexes} is the same as the complex constructed by Farley in \cite{Farley1} for this diagram group.
\end{example}

\section{Cubical Complexes}
\label{sec:complexes}

In this section we define certain $\mathrm{CAT}(0)$ complexes associated to rearrangement groups.  We will assume the reader is familiar with the language of $\mathrm{CAT}(0)$ cubical complexes---see \cite{BriHae} for a comprehensive introduction to this subject.

In \cite{Farley1} and \cite{Farley2}, Farley constructed a locally finite $\mathrm{CAT}(0)$ cubical complex associated to each of the Thompson groups $F$, $T$, and $V$, as well as the diagram groups of Guba and Sapir~\cite{GuSa}, and some of their generalizations. Our construction of a $\CAT(0)$ complex for rearrangement groups is very similar to Farley's, and our complex is the same as Farley's in the cases of $F$, $T$, and~$V$.

Before defining the complex, we need to expand our definition of rearrangement to include homeomorphisms between certain pairs of limit spaces.  This generalization gives rearrangements a groupoid structure and is discussed in Subsection~\ref{subsec:genrearrange}.  We use these generalized rearrangements to define the 1-skeleton of the cubical complex in Subsection~\ref{subsec:complex}.  The technical background needed to define the cubes of the complex is given in Subsections \ref{subsec:contraction} and \ref{subsec:partial order}.  Subsection~\ref{subsec:cubes} defines the cubes in our complex and proves that the complex is $\CAT(0)$.

\subsection{A Groupoid of Rearrangements}
\label{subsec:genrearrange}

We now introduce the notion of a rearrangement between two limit spaces.  We will use this idea heavily in the construction of $\CAT(0)$ cubical complexes.

If we fix a replacement rule $e\to R$, any graph $G$ can serve as the base graph for a replacement system $(G, e\to R)$, leading to a limit space~$X(G)$.  For example, Figure~\ref{fig:DifferentBasilicaBase} shows a limit space obtained from the basilica replacement rule using a different base graph.  Note that this space is actually homemorphic to the basilica, but its structure as a limit space is different (see~Remark~\ref{rem:structure}).
\begin{figure}[tb]
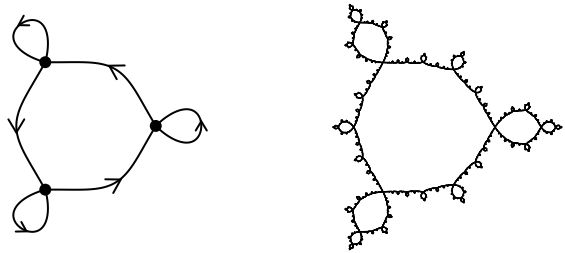

\centering
\vgraphics{BasilicaDifferentBase}
\qquad\qquad\vgraphics{BasilicaDifferentBaseGraph}
\caption{A base graph $G$ and the corresponding limit space $X(G)$ for the basilica replacement rule.}
\label{fig:DifferentBasilicaBase}
\end{figure}

It is possible to define canonical homeomorphisms between the cells of any two limit spaces that are defined using the same replacement rule.  This leads to the following definition.

\begin{definition}Let $X(G)$ and $X(G')$ be limit spaces based on the same replacement rule.  A homeomorphism $f\colon X(G)\to X(G')$ is called a \newword{rearrangement} if there exists a cellular partition $\mathcal{P}$ of $X(G)$ such that $f$ restricts to a canonical homeomorphism on each cell of~$\mathcal{P}$.
\end{definition}

For example, Figure~\ref{Fig:DifferentBasilicasRearrangement} shows a rearrangement between two different limit spaces, both of which are based on the basilica replacement rule.
\begin{figure}[b]
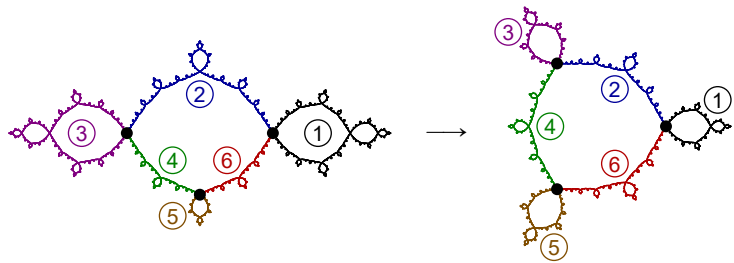

\centering
\vgraphics{BasilicaBetaDomain}\quad\longvarrow\quad\vgraphics{BasilicaDifferentRange}
\caption{A rearrangement from the basilica Julia set to another limit space based on the same replacement rule.  Each of the numbered cells on the left maps to the corresponding cell on the right via a canonical homeomorphism.}
\label{Fig:DifferentBasilicasRearrangement}
\end{figure}

The class of all rearrangements between limit spaces corresponding to a single replacement rule $e\to R$ forms a category with inverses (i.e.~a groupoid) under composition.  The objects of this category consist of one limit space $X(G)$ for each finite directed graph~$G$ (up to isomorphism), and the morphisms are the rearrangements between them.

Any rearrangement $X(G)\to X(G')$ can be described by a graph pair diagram $(E,E',\varphi)$, where $E$ and $E'$ are expansions of $G$ and $G'$, respectively, and $\varphi\colon E\to E'$ is an isomorphism.  Indeed, every such rearrangement has a unique reduced graph pair diagram.

Note that there may or may not exist a rearrangement between a given pair of limit spaces $X(G)$ and $X(G')$.  Specifically, such a rearrangement exists if and only if $G$ and $G'$ have at least one isomorphic pair of expansions.

For the remainder of this section, we let $\R = (G_0,e\to R)$ be an expanding replacement system.

\begin{definition}
The \newword{graph family} $\Gamma(\mathcal{R})$ is the set of all finite, directed graphs $G$ for which there exists at least one rearrangement $X(G_0) \to X(G)$.
\end{definition}

That is, the graph family for $\R$ is the set of all finite directed graphs $G$ that have at least one expansion isomorphic to an expansion of~$G_0$.  In particular, every graph $G$ in $\Gamma(\R)$ has an expansion isomorphic to some graph in the full expansion sequence for~$\R$.

From an algebraic point of view, the graph family $\Gamma(\mathcal{R})$ is precisely the set of graphs whose corresponding limit spaces lie in the connected component of $X(G_0)$ in the groupoid of rearrangements.  If $G \in \Gamma(\mathcal{R})$, it follows that the rearrangement groups for the limit spaces $X(G_0)$ and $X(G)$ are isomorphic, with any rearrangement $X(G_0)\to X(G)$ conjugating one to the other.

\begin{example}\label{ex:VicsekGraphFamily} Let $\R_n$ be a replacement system from the Vicsek family shown in Table~\ref{tab:VicsekFamily}.  Then $\Gamma(\R_n)$ consists of all finite, directed graphs $G$ that satisfy the following conditions:
\begin{enumerate}
\item $G$ is a tree, all of whose vertices are either sources or sinks.
\item Each source in $G$ has exactly $n$ outgoing edges.
\item Each sink in $G$ has either one or two incoming edges.
\end{enumerate}
Indeed, every graph satisfying these conditions is an expansion of the base graph for~$\R_n$.
\end{example}

\begin{example}\label{ex:RabbitGraphFamily} Let $\R_n$ be a replacement system from the rabbit family shown in Table~\ref{tab:Rabbits} (e.g.~$\R_1$ is the basilica replacement system).  Then $\Gamma(\R_n)$ consists of all finite, connected, directed graphs $G$ that satisfy the following conditions:
\begin{enumerate}
\item Each vertex of $G$ has $n+1$ incoming edges and $n+1$ outgoing edges.
\item Removing any vertex of $G$ cuts the graph into $n+1$ connected components.
\end{enumerate}
Indeed, every graph satisfying these conditions is an expansion of the base graph for~$\R_n$.
\end{example}

\begin{example}\label{ex:BubbleBathGraphFamily} Let $\R$ be the replacement system for the Julia set of a rational map shown in Figure~\ref{fig:BubbleBathAll}. Then $\Gamma(\R)$ consists of all finite, connected graphs $G$ that satisfy the following conditions:
\begin{enumerate}
\item Every vertex of $G$ has degree $3$.
\item $G$ is a series-parallel graph, i.e.~it has no subgraph homeomorphic to the complete graph~$K_4$.
\end{enumerate}
Since the replacement graph for $\R$ is symmetric between the initial and terminal vertices, it is possible to switch the direction of an edge by expanding and then contracting, and hence there is no restriction on the directions of the edges for graphs in~$\Gamma(\R)$.  As a result, $\Gamma(\R)$~contains many graphs that are not isomorphic (as directed graphs) to any expansion of the base graph.
\end{example}

\subsection{The Complex}
\label{subsec:complex}

In this section we define a directed graph $K^1(\G)$ on which the group $\G$ acts properly by automorphisms, where $\G$ is the  group of rearrangements associated to $\mathcal{R} = (G_0,e\to R)$.   We will prove in Section~\ref{subsec:cubes} that this graph is the $1$-skeleton of a $\CAT(0)$ cubical complex.

The definition of $K^1(\G)$ is based on a certain special rearrangements that generate the full groupoid, namely base isomorphisms, simple expansions morphisms, and simple contraction morphisms.  We begin with base isomorphisms.

\begin{definition}\quad
Given an isomorphism $\varphi \colon G_1\to G_2$ of directed graphs, the corresponding \newword{base isomorphism} $\varphi\colon X(G_1)\to X(G_2)$ is the rearrangement whose graph pair diagram is $(G_1,G_2,\varphi)$.  A base isomorphism $\varphi \colon X(G) \to X(G)$ is a \newword{base automorphism}.
\end{definition}

We will abuse notation by using the same letter $\varphi$ to refer to an isomorphism $\varphi\colon G_1\to G_2$ of directed graphs and the corresponding base isomorphism $\varphi\colon X(G_1)\to X(G_2)$.

\begin{definition}
An \newword{expansion morphism} is a rearrangement $x\colon X(G)\to X(G')$ whose reduced graph pair diagram has the form $(E,G',\varphi)$ for some expansion $E$ of $G$.  If $E$ is a simple expansion of~$G$, then $x$ is a \newword{simple expansion morphism}.
\end{definition}

The inverse of an expansion morphism is called a \newword{contraction morphism}, and the inverse of a simple expansion morphism is a \newword{simple contraction morphism}.  Note that base isomorphisms are both expansions and contractions, and every expansion that is not a base isomorphism is a composition of simple expansions.

Given a graph $G$ and an expansion $E$ of $G$, the corresponding \newword{canonical expansion morphism} $x\colon X(G) \to X(E)$ is the rearrangement with graph pair diagram $(E,E,\mathrm{id})$. If $f\colon X(G) \to X(G')$ is any rearrangement with graph pair diagram $(E,E',\varphi)$, then $f$ can be written as a composition $(x')^{-1} \circ \varphi \circ x$, where $x\colon X(G)\to X(E)$ and $x'\colon X(G')\to X(E')$ are the canonical expansion morphisms.

\begin{definition}Two rearrangements $f\colon X(G)\to X(G_1)$ and $g\colon X(G)\to X(G_2)$ are \newword{range equivalent} if there exists a base isomorphism $\varphi\colon X(G_1)\to X(G_2)$ such that $g=\varphi\circ f$.
\end{definition}

Note that range equivalent rearrangements always have the same domain, but may have different codomains.  As the name implies, range equivalence is an equivalence relation on rearrangements with a given domain.  The range equivalence class of a rearrangement $f$ will be denoted~$[f]$.

It is easy to see that two expansion morphisms with the same domain are range equivalent if and only if their reduced graph pair diagrams have the same domain expansion.  In particular, every expansion morphism is range equivalent to a unique canonical expansion morphism.  The criteria for range equivalence of contractions is more subtle, and will be explored in Section~\ref{subsec:contraction}.

We are now ready to define the 1-skeleton of our complex.

\begin{definition}Let $K^1(\G)$ be the directed graph defined as follows:
\begin{enumerate}
\item The vertices of $K^1(\G)$ are the range equivalence classes of rearrangements with domain~$X(G_0)$.
\item There is an directed edge from $[f]$ to $[g]$ if $g = x\circ f$ for some simple expansion morphism~$x$.
\end{enumerate}
\end{definition}

Note that the definition of the directed edges is independent of the chosen representatives $f$ and $g$.  In particular, if $\varphi
\circ f$ and $\psi \circ g$ are another pair of representatives for $[f]$ and $[g]$, then $\psi\circ g = (\psi\circ x \circ \varphi^{-1})\circ (\varphi\circ f)$, where $\psi\circ x \circ \varphi^{-1}$ is again a simple expansion morphism.

We will let $K^0(\G)$ denote the set of vertices of the graph $K^1(\G)$.  The group $\G$ acts on $K^0(\G)$ by right composition, i.e.~$[f]g = [f\circ g]$ for any rearrangement $f\colon X(G_0) \to X(G)$ and any $g\in \G$.  It is easy to see that this extends to an action of $\G$ on $K^1(\G)$ by automorphisms.

\begin{proposition}The action of\/ $\G$ on $K^1(\G)$ is proper.  In particular, given any vertex\/ $[f] \in K^0(\G)$, where $f\colon X(G_0) \to X(G)$, the stabilizer of\/ $[f]$ is isomorphic to the group of automorphisms of the graph~$G$.
\end{proposition}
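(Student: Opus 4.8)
The plan is to compute the stabilizer of a vertex $[f]$ explicitly, identify it with $\mathrm{Aut}(G)$, and then deduce properness from the finiteness of this group.

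First I would unwind the $\G$-action. Since $\G$ acts on $K^0(\G)$ by right composition, $[f]g=[f\circ g]$, an element $g\in\G$ fixes $[f]$ if and only if $f\circ g$ and $f$ are range equivalent, i.e.\ if and only if there is a base automorphism $\varphi\colon X(G)\to X(G)$ with $f\circ g=\varphi\circ f$. Solving for $g$, this says precisely that $g=f^{-1}\circ\varphi\circ f$ for some base automorphism $\varphi$ of $X(G)$; conversely every such $g$ is a rearrangement of $X(G_0)$ (being a composition of rearrangements, using the groupoid structure) and lies in the stabilizer. Thus $\mathrm{Stab}_\G([f])=\{\,f^{-1}\circ\varphi\circ f : \varphi \text{ a base automorphism of } X(G)\,\}$. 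Because $f$ is invertible and $(f^{-1}\circ\varphi\circ f)\circ(f^{-1}\circ\psi\circ f)=f^{-1}\circ(\varphi\circ\psi)\circ f$, the map $\varphi\mapsto f^{-1}\circ\varphi\circ f$ is an injective group homomorphism from the group of base automorphisms of $X(G)$ onto $\mathrm{Stab}_\G([f])$.

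Next I would identify the group of base automorphisms of $X(G)$ with the automorphism group $\mathrm{Aut}(G)$ of the directed graph $G$. Sending a graph automorphism $\psi\colon G\to G$ to the base isomorphism with graph pair diagram $(G,G,\psi)$ is a homomorphism $\mathrm{Aut}(G)\to\{\text{base automorphisms of }X(G)\}$, and it is surjective by the definition of base automorphism. For injectivity, suppose the base isomorphism attached to $\psi$ is the identity homeomorphism of $X(G)$. Then $C(\epsilon)=C(\psi(\epsilon))$ for every edge $\epsilon$ of $G$; since $(G,e\to R)$ is again an expanding replacement system (as every graph in $\Gamma(\R)$ has no isolated vertices), distinct edges of $G$ yield cells with disjoint, nonempty interiors by Proposition~\ref{prop:contain}, forcing $\psi(\epsilon)=\epsilon$ for every edge $\epsilon$, and hence $\psi=\mathrm{id}$ since $G$ has no isolated vertices. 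This is the same identification already used for $\mathrm{Aut}_\R(E)$ in Section~\ref{subsec:finite}. Composing the two isomorphisms gives $\mathrm{Stab}_\G([f])\cong\mathrm{Aut}(G)$, as claimed.

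Finally, $G$ is a finite graph, so $\mathrm{Aut}(G)$ is finite; hence every vertex stabilizer of the action of $\G$ on $K^1(\G)$ is finite. The stabilizer of a directed edge of $K^1(\G)$ is contained in the stabilizer of its initial vertex, so edge stabilizers are finite too, and the action is proper. The only step that needs genuine care is the injectivity argument in the previous paragraph --- that a base isomorphism really does remember the graph automorphism defining it --- where the disjointness and nonemptiness of cell interiors from Proposition~\ref{prop:contain} is essential; the rest is bookkeeping with range equivalence and composition in the groupoid.
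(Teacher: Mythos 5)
Your proposal is correct and follows essentially the same route as the paper: unwind range equivalence to see that the stabilizer of $[f]$ is $\{f^{-1}\circ\varphi\circ f\}$ for $\varphi$ ranging over base automorphisms of $X(G)$, identify this group with $\mathrm{Aut}(G)$, and conclude properness from finiteness of these stabilizers. You simply fill in two details the paper leaves implicit --- the injectivity of $\mathrm{Aut}(G)\to\{\text{base automorphisms}\}$ via Proposition~\ref{prop:contain}, and the passage from finite vertex (and edge) stabilizers to properness --- both of which are handled correctly.
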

\begin{proof}If $g\in \G$ then $[f]g = [f]$ if and only if $f\circ g$ is range equivalent to $f$, i.e.~if and only if $f\circ g=\varphi\circ f$ for some base automorphism $\varphi$ of $X(G)$.  Thus the stabilizer of $[f]$ consists of all rearrangements of the form $f^{-1}\circ \varphi\circ f$, where $\varphi$ is a base automorphism of~$X(G)$. This is isomorphic to to the group of base automorphisms of $X(G)$, which is itself naturally isomorphic to the automorphism group of~$G$.
\end{proof}

We will prove in Section~\ref{subsec:cubes} that $K^1(\G)$ is the 1-skeleton of a $\CAT(0)$ cubical complex~$K(\G)$.  It follows immediately that the action of $\G$ on $K^1(\G)$ extends to a proper action of $\G$ on $K(\G)$ by isometries.

\subsection{Contractions}
\label{subsec:contraction}

The goal of this section is to enumerate the incoming edges at a vertex $[f]$ of $K^1(\G)$.  Such edges correspond to simple contraction morphisms.  To enumerate them, we need to describe all possible ways of contracting a given graph~$G$.

For the following definition, let $R$ denote the replacement graph, and let $\Rloop$ be the graph obtained from $R$ by identifying the initial and terminal vertices.

\begin{definition}Let $G$ be a directed graph.  A \newword{characteristic map} for $R$ in $G$ is an isomorphism $\chi\colon R\to S$ or $\chi\colon \Rloop\to S$, where $S$ is a subgraph of $G$, having the following property: for each interior vertex $v$ of~$R$, every edge of $G$ incident on $\chi(v)$ lies in~$S$.
\end{definition}

A subgraph $S$ of $G$ is called a \newword{collapsible subgraph} if it is the image of some characteristic map.  Note that a single collapsible subgraph $S$ may be the image of more than one characteristic map when $R$ or $\Rloop$ has nontrivial automorphisms.

\begin{figure}
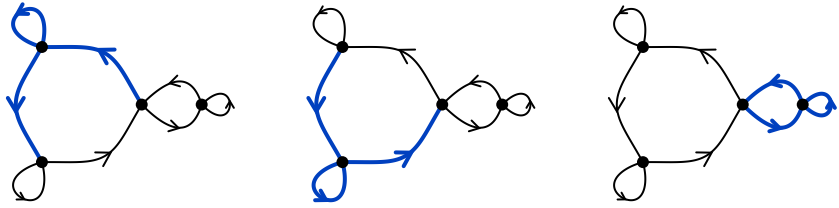

\centering
\vgraphics{BasilicaCollapsible1} \qquad \vgraphics{BasilicaCollapsible2} \qquad \vgraphics{BasilicaCollapsible3}
\caption{Three collapsible subgraphs of a graph in the basilica graph family.}
\label{fig:BasilicaCollapsibleSubgraphs}
\end{figure}
\begin{example} Figure~\ref{fig:BasilicaCollapsibleSubgraphs} shows the three collapsible subgraphs for a certain graph $G$ that lies in the graph family for the basilica (see  Example~\ref{ex:RabbitGraphFamily}).  The two on the left are each isomorphic to~$R$, while the one on the right is isomorphic to~$\Rloop$. Each of these collapsible subgraphs corresponds to a uniquely determined characteristic map.
\end{example}

\begin{figure}
\centering
\begin{tabular}{c@{\qquad}c}
\includegraphics{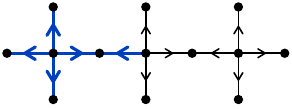} & \includegraphics{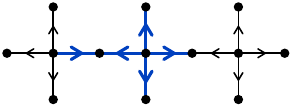} \\[12pt]
\includegraphics{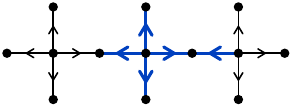} & \includegraphics{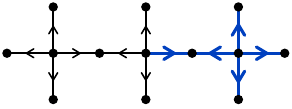}
\end{tabular}
\caption{Four collapsible subgraphs of a graph in the Vicsek graph family, corresponding to a total of sixteen different characteristic maps.}
\label{fig:VicsekCollapsibleSubgraphs}
\end{figure}
\begin{example}Figure~\ref{fig:VicsekCollapsibleSubgraphs} shows the four collapsible subgraphs for a certain graph $G$ that lies in the graph family for the Vicsek replacement system (see Example~\ref{ex:VicsekFamily}).  Of these four collapsible subgraphs, the first and last each correspond to six different characteristic maps, corresponding to the six permutations of the dangling edges, while the collapsible subgraphs in the center each correspond to two different characteristic maps.
\end{example}

\begin{definition}Let $c\colon X(G) \to X(G')$ be a simple contraction with graph pair diagram $(G,E',\varphi)$, where $E'$ is the expansion of $G'$ obtained by replacing a single edge $e_0$ and $\varphi\colon G\to E'$ is an isomorphism. The \newword{characteristic} of $c$ is the isomorphism $\chi\colon R\to S$ or $\chi\colon \Rloop\to S$ that maps each edge $\edge$ of $R$ to the edge $\varphi^{-1}(e_0\edge)$ of~$S$.
\end{definition}

It is easy to check that this function $\chi$ is a characteristic map.

\begin{proposition}Let $G$ be a directed graph.  Then every characteristic map $\chi$ for $R$ in $G$ is the characteristic of some simple contraction with domain~$X(G)$, and two simple contractions with domain $X(G)$ are range equivalent if and only if they have the same characteristic.
\end{proposition}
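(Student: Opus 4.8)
The plan is to prove the two assertions separately, in each case working directly with explicit graph pair diagrams and invoking the reduction criterion from Proposition~\ref{prop:UniqueReduced}.

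For the first assertion, fix a characteristic map $\chi\colon R\to S$ (the case $\chi\colon\Rloop\to S$ is identical, using condition~(2) of the definition of \emph{expanding} to identify the edge set of $\Rloop$ with that of $R$). Form the graph $G'$ by \newword{collapsing} $S$: delete every edge of $S$ and every interior vertex of $S$, and insert one new edge $e_0$ from the image of the initial vertex of $R$ to the image of the terminal vertex (a loop in the $\Rloop$ case). The defining property of a characteristic map guarantees that no edge of $G$ outside $S$ is incident on an interior vertex of $S$; from this one checks that $G'$ is a legitimate base graph (with no isolated vertices if $G$ has none) and that the assignment sending each edge $\chi(\edge)$ of $S$ to $e_0\edge$, each interior vertex $\chi(\nu)$ of $S$ to $e_0\nu$, and everything else to itself is a well-defined isomorphism $\varphi\colon G\to G'\exp e_0$ of directed graphs. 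Then $(G,\,G'\exp e_0,\,\varphi)$ is a graph pair diagram of a rearrangement $c\colon X(G)\to X(G')$, and $\varphi^{-1}(e_0\edge)=\chi(\edge)$, so $\chi$ is the characteristic of $c$ as soon as we know $c$ is a simple contraction.

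To establish that, I would show $(G'\exp e_0,\,G,\,\varphi^{-1})$ is the \emph{reduced} graph pair diagram of $c^{-1}$, so that $c^{-1}$ is a simple expansion morphism (its domain expansion $G'\exp e_0$ being a simple expansion of $G'$) and $c$ a simple contraction. By Proposition~\ref{prop:UniqueReduced} it suffices to identify the maximal cells of $X(G')$ on which $c^{-1}$ is regular. It is regular on every top cell $C(\edge')$ of $X(G')$ with $\edge'\neq e_0$, but \emph{not} on $C(e_0)$: a canonical homeomorphism out of $C(e_0)$ would have to carry each subcell $C(e_0\edge)$ onto a cell $C(a\edge)$, and hence would force the edge $\chi(\edge)=\varphi^{-1}(e_0\edge)$ of the base graph $G$ to have an address $a\edge$ of length at least two, which is impossible. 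Thus the maximal regular cells are exactly the cells $C(e_0\edge)$ for $\edge\in E(R)$ together with the cells $C(\edge')$ for $\edge'\neq e_0$, i.e.\ the cells of $X(G')$ indexed by the edges of $G'\exp e_0$, as required. This reducedness argument is the step I expect to be the main obstacle, since it is precisely where the characteristic-map hypothesis and the ``atomic'' nature of the edges of $G$ conspire to exclude a spurious canonical homeomorphism; the verification that collapsing yields a legitimate graph and that $\varphi$ respects incidences and orientations, and the diagram manipulations below, are routine.

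For the second assertion, let $c_i\colon X(G)\to X(G_i)$ ($i=1,2$) be simple contractions with reduced graph pair diagrams $(G,\,G_i\exp e_0^{(i)},\,\varphi_i)$. If $c_1$ and $c_2$ have the same characteristic $\chi$, then $\varphi_i^{-1}(e_0^{(i)}\edge)=\chi(\edge)$ for every $\edge\in E(R)$, so the isomorphism $\varphi_2\circ\varphi_1^{-1}\colon G_1\exp e_0^{(1)}\to G_2\exp e_0^{(2)}$ matches the two simple expansions edge-for-edge (sending $e_0^{(1)}\edge$ to $e_0^{(2)}\edge$) and is therefore the canonical expansion of a graph isomorphism $\psi_0\colon G_1\to G_2$ with $\psi_0(e_0^{(1)})=e_0^{(2)}$. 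Writing $\psi\colon X(G_1)\to X(G_2)$ for the corresponding base isomorphism, one has $\psi\circ c_1=c_2$ because their graph pair diagrams over the common range refinement coincide (concretely, $\varphi_2\circ\varphi_1^{-1}$ expanded and then composed with $\varphi_1$ returns $\varphi_2$); hence $c_1$ and $c_2$ are range equivalent. Conversely, suppose $c_2=\psi\circ c_1$ for a base isomorphism $\psi$ with underlying graph isomorphism $\psi_0$. Composing the reduced diagram of $c_1$ with the expansion of $\psi$ at $e_0^{(1)}$ gives a graph pair diagram $(G,\,G_2\exp\psi_0(e_0^{(1)}),\,\psi_0'\circ\varphi_1)$ of $c_2$ of the form used to define the characteristic, where $\psi_0'$ sends $e_0^{(1)}\edge$ to $\psi_0(e_0^{(1)})\edge$; reading off the characteristic of $c_2$ from it yields $\edge\mapsto\varphi_1^{-1}\bigl((\psi_0')^{-1}(\psi_0(e_0^{(1)})\edge)\bigr)=\varphi_1^{-1}(e_0^{(1)}\edge)$, the characteristic of $c_1$. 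Here one uses that the characteristic does not depend on the chosen diagram of the prescribed form, since any such diagram has domain exactly $G$ and is therefore the reduced one, which is unique.
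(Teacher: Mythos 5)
Your proposal is correct and takes essentially the same route as the paper: collapse the image of $\chi$ to a single edge $e_0$, define the isomorphism $\varphi\colon G\to G'\exp e_0$ exactly as you do, and then compare graph pair diagrams to see that two simple contractions are range equivalent precisely when their characteristics agree (the paper phrases this via the possibly unreduced diagram $(E_1,E_2,\varphi_2\circ\varphi_1^{-1})$ of $x_2\circ x_1^{-1}$, while you argue the two implications separately, but the content is the same). The one step you single out as the main obstacle---proving reducedness via maximal regular cells---is valid but unnecessary: since $R$ has at least two edges, any graph pair diagram whose domain or range graph is the base graph itself cannot arise by expanding a smaller diagram (edge counts forbid it), so such diagrams are automatically reduced, which is why the paper simply asserts that $(G,E',\varphi)$ is the diagram of a simple contraction with characteristic~$\chi$.
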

\begin{proof}First let $\chi$ be a characteristic map for $R$ in $G$, with image~$S$.  Let $G'$ be the graph obtained by removing $S$ and replacing it with a single directed edge~$e_0$, oriented in the appropriate direction, and let $E'$ be the expansion of $G'$ obtained by replacing the edge~$e_0$.  Define an isomorphism $\varphi\colon G \to E'$ by letting $\varphi(e) = e$ for edges $e$ that do not lie in~$S$, and $\varphi(\chi(\edge)) = e_0\edge$ for every edge $\chi(\edge)$ of $S$.  Then $(G,E',\varphi)$ is the graph pair diagram for a simple contraction $X(G) \to X(G')$ having $\chi$ as its characteristic.

Now suppose that $x_i\colon X(G) \to X(G_i)$ (for $i=1,2$) are two simple contractions with graph pair diagrams $(G,E_i,\varphi_i)$, and let $e_i$ be the edge of $G_i$ that was replaced to obtain $E_i$. Observe that $(E_1,E_2,\varphi_2\circ \varphi_1^{-1})$ is a (possibly unreduced) graph pair diagram for $x_2\circ x_1^{-1}$.  Then $x_2\circ x_1^{-1}$ is a base isomorphism if and only if this graph pair diagram can be reduced, i.e.~if and only if $(\varphi_2\circ\varphi_1^{-1})(e_1\edge) = e_2\edge$ for every edge $\edge$ of~$R$.  This occurs if and only if $\varphi_1^{-1}(e_1\edge) = \varphi_2^{-1}(e_2\edge)$ for every edge $\edge$ of~$R$, which is to say that $x_1$ and $x_2$ have the same characteristic.
\end{proof}

Since the number of possible characteristic maps $\chi\colon R\to G$ is finite, it follows that the graph $K^1(\G)$ is locally finite.

\subsection{A Partial Order}
\label{subsec:partial order}

In this section, we define a partial order on $K^0(\G)$, and we prove that every finite subset of $K^0(\G)$ has a least upper bound.  We will use this partial order in Section~\ref{subsec:cubes} to prove that the complex $K(\G)$ is~$\CAT(0)$.

Given any~$G\in\Gamma(\R)$, let $\rset{G}$ denote the set of all range equivalence classes of rearrangements having domain~$X(G)$.  For example, $\rset{G_0}$ is equal to the set of vertices~$K^0(\G)$.

\begin{definition} Given rearrangements $f,g\in\rset{G}$, we say that $[f]$ \newword{precedes} $[g]$, denoted $[f] \preceq [g]$, if $g = x \circ f$ for some expansion morphism~$x$.
\end{definition}

It is easy to show that, for any $G\in\Gamma(\R)$, the relation $\preceq$ is a well-defined partial order on $\rset{G}$.  In addition, if $f\colon X(G) \to X(G')$ is a rearrangement, then right-composition by $f$ induces an order isomorphism from $\rset{G'}$ to $\rset{G}$.

Note that the graph $K^1(\G)$ is precisely the Hasse diagram for $K^0(\G)$ under the partial order~$\preceq$.  In particular, since every expansion that is not a base isomorphism is a composition of simple expansions, two vertices $[f]$ and $[g]$ are joined by a directed path in $K^1(\G)$ if and only if $[f]\preceq[g]$.

Given a vertex $[f] \in K^0(\G)$, where $f\colon X(G_0) \to X(G)$, the \newword{rank} of $[f]$ is the number of edges in the graph~$G$.  Since isomorphic graphs have the same number of edges, the rank of $[f]$ does not depend on the chosen representative~$f$.  If $[f] \preceq [g]$, it follows that the rank of $[f]$ is less than or equal to the rank of $[g]$, with equality if and only if $[f]=[g]$.  Note that $K^0(\G)$ is not quite a ranked poset with respect to this definition, since directed edges typically increase the rank by more than one.

\begin{lemma} \label{lem:unique} Let $f\colon X(G) \to X(G')$ be a rearrangement, and let\/ $\mathrm{id}$ be the identity map on~$X(G)$.  Then\/ $[f]$ and\/ $[\mathrm{id}]$ have a least upper bound in~$\rset{G}$.
\end{lemma}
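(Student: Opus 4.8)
The plan is to interpret the statement as asserting that the rearrangement $f\colon X(G)\to X(G')$ and the identity $\mathrm{id}\colon X(G)\to X(G)$ admit a common ``expansion'' that is range-minimal among all common expansions. Concretely, $[\mathrm{id}]\preceq[g]$ means $g=x$ is an expansion morphism $X(G)\to X(E)$ for some expansion $E$ of $G$, while $[f]\preceq[g]$ means $g = x'\circ f$ for some expansion morphism $x'\colon X(G')\to X(E')$; the range equivalence forces $X(E)\cong X(E')$ via a base isomorphism, so $g$ is range equivalent to a canonical expansion morphism attached to an expansion $E$ of $G$ which, after pushing forward along $f$, is also (the domain of the reduced graph pair diagram of) an expansion of $G'$. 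So the content is: among all expansions $E$ of $G$ such that $f$ is simultaneously regular on every cell of the induced cellular partition of $X(G)$, there is a smallest one.

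First I would fix the reduced graph pair diagram $(D,D',\psi)$ of $f$ guaranteed by the uniqueness statement for reduced graph pair diagrams (the groupoid analogue of Proposition~\ref{prop:UniqueReduced}). I claim the least upper bound is $[\,x_D\,]$, where $x_D\colon X(G)\to X(D)$ is the canonical expansion morphism associated to $D$. Indeed $[\mathrm{id}]\preceq[x_D]$ since $x_D$ is an expansion morphism, and $[f]\preceq[x_D]$ because $x_D = \psi^{-1}\circ x_{D'}\circ f$ where $x_{D'}\colon X(G')\to X(D')$ is canonical, and $\psi^{-1}\circ x_{D'}$ is again an expansion morphism (a base isomorphism composed with an expansion morphism). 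The core of the argument is to show $[x_D]$ is below every other upper bound. Suppose $[g]\in\rset{G}$ with $[\mathrm{id}]\preceq[g]$ and $[f]\preceq[g]$. From $[\mathrm{id}]\preceq[g]$, $g$ is range equivalent to $x_E$ for some expansion $E$ of $G$. From $[f]\preceq[g]$, $g=x''\circ f$ for some expansion morphism $x''$; translating, $f$ is regular on $C(e)$ for every edge $e$ of $E$ (this is exactly the statement, proved in Proposition~\ref{prop:UniqueReduced}, that the domain graph of a graph pair diagram for $f$ consists of cells on which $f$ is regular). Since $D$ is the domain of the \emph{reduced} graph pair diagram, its edges are precisely the \emph{maximal} cells on which $f$ is regular; hence every edge of $E$ refines, i.e.\ has as a prefix, some edge of $D$. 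Therefore $E$ is an expansion of $D$, which gives an expansion morphism $X(D)\to X(E)$ and so $[x_D]\preceq[x_E]=[g]$.

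The step I expect to be the main obstacle is the bookkeeping around range equivalence: I must be careful that ``$[\mathrm{id}]\preceq[g]$ implies $g$ is range equivalent to a \emph{canonical} expansion morphism'' really holds (this is the remark in Subsection~\ref{subsec:complex} that every expansion morphism is range equivalent to a unique canonical expansion morphism, so it is in hand), and that the edges-refine-edges comparison between $E$ and $D$ genuinely yields that $E$ is an expansion of $D$ as \emph{directed} graphs, not merely that the cell partitions are nested. For the latter I would invoke the one-to-one correspondence between expansions of a base graph and cellular partitions of its limit space (stated at the start of Subsection~\ref{subsec:graphpair}, and valid in the groupoid setting): the cellular partition of $X(G)$ induced by $E$ refines the one induced by $D$, and a refinement of cellular partitions corresponds exactly to an expansion of the finer-from-the-coarser graph, so $E$ is (isomorphic to) an expansion of $D$. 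Once this is set up, uniqueness of the least upper bound is automatic from antisymmetry of $\preceq$, which was already noted when $\preceq$ was introduced.
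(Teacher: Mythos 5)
Your proposal is correct and follows essentially the same route as the paper: the least upper bound is identified as the canonical expansion morphism corresponding to the domain graph of the unique reduced graph pair diagram of $f$, with minimality coming from the fact that any expansion $E$ of $G$ on whose cells $f$ is regular must refine (be an expansion of) that domain graph. Your write-up simply makes explicit the range-equivalence bookkeeping and the expansions-versus-partitions correspondence that the paper's shorter proof leaves implicit.
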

\begin{proof}This is just a restatement of the fact that $f$ has a unique reduced graph pair diagram.  In particular, if $[x]\in\rset{G}$, observe that $[\mathrm{id}] \preceq [x]$ if and only if $x$ is an expansion morphism.  Indeed, we need only consider the case where $x$ is canonical, corresponding to some expansion $E$ of~$G$. Then $[f] \preceq [x]$ if and only if $x\circ f^{-1}$ is an expansion, which occurs if and only if $f$ has a graph pair diagram with $E$ as the domain graph.  Then the minimum such $x$ is the canonical expansion morphism corresponding to the domain graph of the reduced graph pair diagram for~$f$.
\end{proof}

\begin{proposition}\label{prop:lub}Let $G \in \Gamma(\R)$, and let $f,g\in \rset{G}$.  Then\/ $[f]$ and\/ $[g]$ have a least upper bound in~$\rset{G}$.
\end{proposition}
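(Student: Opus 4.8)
The plan is to reduce this to Lemma~\ref{lem:unique} by transporting the problem along an order isomorphism. First I would choose representative rearrangements $f\colon X(G)\to X(G_1)$ and $g\colon X(G)\to X(G_2)$ for the two classes. Then $h = g\circ f^{-1}$ is a rearrangement $X(G_1)\to X(G_2)$, so Lemma~\ref{lem:unique} applies to $h$ and the identity map on $X(G_1)$: the classes $[h]$ and $[\mathrm{id}]$ have a least upper bound $[\ell]$ in $\rset{G_1}$.

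Next I would push this least upper bound forward to $\rset{G}$. Right-composition by $f$ induces an order isomorphism $\rset{G_1}\to\rset{G}$, sending $[x]\mapsto[x\circ f]$. Under this isomorphism $[\mathrm{id}]$ maps to $[f]$, and $[h]=[g\circ f^{-1}]$ maps to $[g\circ f^{-1}\circ f]=[g]$. Since an order isomorphism carries least upper bounds to least upper bounds, $[\ell\circ f]$ is a least upper bound of $[f]$ and $[g]$ in $\rset{G}$, as desired. (The least-upper-bound statement for an arbitrary finite subset of $K^0(\G)$ advertised at the start of the subsection then follows by an immediate induction on the size of the subset.)

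I do not expect a genuine obstacle here: essentially all of the content of the proposition is already isolated in Lemma~\ref{lem:unique} — equivalently, in the existence and uniqueness of reduced graph pair diagrams — and what remains is the bookkeeping of composing rearrangements. The only small points I would check explicitly are that $h$ is indeed a rearrangement (it is a composition of rearrangements between limit spaces built from the same replacement rule, hence a rearrangement), and that the resulting class is independent of the choices of $f$ and $g$, which is automatic since it is characterized order-theoretically as the least upper bound of $[f]$ and $[g]$ in $\rset{G}$.
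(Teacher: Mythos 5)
Your proof is correct and follows the same route as the paper: apply Lemma~\ref{lem:unique} to $[\mathrm{id}]$ and $[g\circ f^{-1}]$ in the codomain's class set, then transport the least upper bound back via the order isomorphism given by right-composition with $f$. The additional remarks (that $g\circ f^{-1}$ is a rearrangement and that the finite case follows by induction) match the paper's surrounding discussion, so nothing is missing.
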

\begin{proof}Let $X(G')$ denote the codomain of $f$, and let $\mathrm{id}$ be the identity map on $X(G')$.  By Lemma~\ref{lem:unique}, there exists a least upper bound $[x]$ for $[\mathrm{id}]$ and $[g\circ f^{-1}]$ in $\rset{G'}$.  Then $[x\circ f]$ is the least upper bound for $[f]$ and $[g]$  in $\rset{G}$, since right-composition by $f$ is an order isomorphism $\rset{G'} \to \rset{G}$.
\end{proof}

In particular, every pair of elements of $K^0(\G)$ has a least upper bound.  By induction, any finite subset of $K^0(\G)$ has a least upper bound as well.

\subsection{Cubes}
\label{subsec:cubes}

In this section we define the complex $K(\G)$ and prove that it is $\CAT(0)$.  Our proof closely follows the methods of Farley~\cite{Farley1,Farley2}.

Given any directed graph $G$ and any set $S$ of edges of $G$, let $G \exp S$ denote the expansion of $G$ obtained by replacing each of the edges of~$S$. That is, if $S=\{e_1,\ldots,e_n\}$, then
\[
G \exp S \;=\; G\exp e_1\exp \cdots \exp e_n.
\]
Let $x_S \colon X(G) \to X(G \exp S)$ denote the canonical expansion with graph pair diagram $(G \exp S,G \exp S,\mathrm{id})$.

\begin{definition}If $f\colon X(G_0) \to X(G)$ is a rearrangement and $S$ is a set of edges of $G$, the corresponding \newword{cube} is the subset of $K^0(\G)$ defined by
\[
\cube(f,S) \;=\; \{[x_T\circ f] \mid T\subseteq S\}.
\]
\end{definition}

\begin{proposition}The subgraph of $K^1(\G)$ induced by\/ $\cube(f,S)$ is isomorphic to the\/ $1$-skeleton of an\/ $|S|$-dimensional cube.
\end{proposition}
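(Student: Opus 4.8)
The plan is to show that the map $T \mapsto [x_T \circ f]$ is a bijection from the power set $2^S$ onto $\cube(f,S)$ that carries the inclusion order on subsets of $S$ to the order $\preceq$, and then to identify the induced subgraph of $K^1(\G)$ with the Hasse diagram of the Boolean lattice $2^S$, which is exactly the $1$-skeleton of an $|S|$-dimensional cube. Since right-composition by $f$ is an order isomorphism $\rset{G} \to \rset{G_0}$ (and a graph isomorphism of the corresponding induced subgraphs), it suffices to treat the case $f = \mathrm{id}$, i.e.\ to analyze the vertices $[x_T]$ for $T \subseteq S$, where $S$ is a set of edges of $G = G_0$.

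First I would establish that $T \mapsto [x_T]$ is injective and order-reflecting: if $T, T' \subseteq S$, then $[x_T] \preceq [x_{T'}]$ if and only if $T \subseteq T'$. The ``if'' direction is immediate, since when $T \subseteq T'$ the graph $G \exp T'$ is an expansion of $G \exp T$ (expand the remaining edges of $T' \setminus T$), and the composite $x_{T'} \circ x_T^{-1}$ is the corresponding canonical expansion morphism, so $[x_T] \preceq [x_{T'}]$. For the ``only if'' direction, suppose $x_{T'} \circ x_T^{-1}$ is an expansion morphism; its reduced graph pair diagram has domain an expansion of $G \exp T$, so $G \exp T'$ must be an expansion of $G \exp T$. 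Here I would use the combinatorial description of edges as sequences $\edge_0\edge_1\cdots\edge_n$: an edge $e \in S \setminus T$ appears as an unexpanded edge of $G \exp T$ (length-one suffix beyond its $G$-part) but, if $e \in T'$, appears in $G \exp T'$ only as a proper prefix of longer edges $e\edge$ with $\edge \in E(R)$; since $G \exp T'$ being an expansion of $G \exp T$ forces every edge of $G \exp T'$ to have some edge of $G \exp T$ as a prefix, and $e$ itself is an edge of $G \exp T$, this is consistent; but reducedness of the diagram for $x_{T'}\circ x_T^{-1}$ together with the injectivity already in hand (each $T$ yielding a genuinely different expansion since distinct subsets of $S$ expand distinct edge-sets) pins down $T \subseteq T'$. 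In particular the map $T \mapsto [x_T]$ is injective, so $\cube(\mathrm{id}, S)$ has exactly $2^{|S|}$ vertices.

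Next I would identify the edges. By definition there is a directed edge in $K^1(\G)$ from $[g]$ to $[h]$ precisely when $[g] \preceq [h]$ and they have the same rank up to one simple expansion — more precisely, when $h = y \circ g$ for a simple expansion morphism $y$, which (recalling that $K^1(\G)$ is the Hasse diagram of $\preceq$, as noted in Subsection~\ref{subsec:partial order}) means $[g] \prec [h]$ with nothing strictly between. Among the vertices $[x_T]$, $T \subseteq S$, the induced order is the Boolean lattice on $S$ by the previous paragraph, so the covering relation is $T \subsetneq T'$ with $|T' \setminus T| = 1$; and if $T' = T \cup \{e\}$ then $x_{T'} \circ x_T^{-1}$ is the canonical simple expansion morphism replacing the single edge $e$ of $G \exp T$, hence an edge of $K^1(\G)$. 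Thus the induced subgraph on $\cube(\mathrm{id},S)$ has vertex set the subsets of $S$ with a directed edge $T \to T \cup \{e\}$ for each $T$ and each $e \in S \setminus T$ — this is literally the $1$-skeleton (with its standard edge-orientation) of the $|S|$-cube $[0,1]^S$. Transporting through right-composition by $f$ gives the general case.

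The step I expect to be the main obstacle is the order-reflection claim $[x_T] \preceq [x_{T'}] \Rightarrow T \subseteq T'$, i.e.\ ruling out ``accidental'' expansions: one must be sure that $G \exp T'$ can be an expansion of $G \exp T$ \emph{only} in the obvious way, with no collapse identifying different subsets of $S$. This is where the sequence-notation bookkeeping for edges and vertices of expansions (from Subsection~\ref{subsec:replacement}) does the work, together with the fact (used implicitly throughout Farley's setup) that an expansion of a graph is recorded faithfully by which edges were expanded; I would isolate this as a short lemma if it is not already available, but given the explicit edge-as-sequence description and the uniqueness of reduced graph pair diagrams (Proposition~\ref{prop:UniqueReduced}) it should go through cleanly. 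Everything else is the formal observation that a poset isomorphic to a Boolean lattice, with its Hasse diagram, is a combinatorial cube.
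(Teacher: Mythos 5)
Your proposal is correct and takes essentially the same route as the paper's proof: identify $\cube(f,S)$ with the Boolean lattice of subsets of $S$ via $T\mapsto[x_T\circ f]$, check that $\preceq$ restricts to inclusion, and note that the connecting morphism is a simple expansion exactly when the two subsets differ by a single edge, so the induced subgraph is the $1$-skeleton of an $|S|$-dimensional cube. The one step you flag as delicate, the order-reflection $[x_T\circ f]\preceq[x_{T'}\circ f]\Rightarrow T\subseteq T'$, is likewise recorded as an observation in the paper, and your sketch closes cleanly if the address bookkeeping is applied to the morphism $x_{T'}\circ x_T^{-1}$ itself (which is induced by the identity on the symbol space and has reduced graph pair diagram $\bigl(G\exp(T\cup T'),\,G\exp(T\cup T'),\,\mathrm{id}\bigr)$, viewed as expansions of $G\exp T$ and $G\exp T'$ respectively) rather than to an abstract isomorphism onto an expansion of $G\exp T$: from this diagram it is an expansion morphism iff $T\setminus T'=\emptyset$ and a simple expansion iff moreover $|T'\setminus T|=1$.
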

\begin{proof}Note first that $\cube(f,S)$ has $2^{|S|}$ distinct vertices, since $[x_T\circ f] = [x_U \circ f]$ if and only if $[x_T] = [x_U]$, which occurs if and only if $T=U$.  Moreover, observe that $[x_T\circ f] \preceq [x_U\circ f]$ if and only if $T\subseteq U$, with $x_U\circ f = x_{U-T}\circ (x_T\circ f)$.  But $x_{U-T}$ is a simple expansion if and only if $|U-T| = 1$.  We conclude that there is an edge in $K^1(\G)$ from $[x_T\circ f]$ to $[x_U\circ f]$ if and only if $T\subseteq U$ and $|U-T|=1$, and therefore the induced subgraph is the $1$-skeleton of a cube.
\end{proof}

It is easy to check that the faces of $\cube(f,S)$ are the sets
\[
\bigl\{ \cube(x_T\circ f,U) \;\bigr|\; T,U\subseteq S\text{ and }T\cap U=\emptyset\bigr\}.
\]
The vertex $[f]$ is the minimum element of $\cube(f,S)$ under the partial order~$\preceq$, and is called the \newword{base} of the cube.  The vertex $[x_S\circ f]$ is the \newword{apex} of the cube, and is the maximum element under the partial order.  Indeed, $\cube(f,S)$ itself is precisely the closed interval $\bigl[[f],[x_S\circ f]\bigr]$, and the faces of the cube are precisely the closed subintervals of this interval.


Note also that $\cube(f,S) = \cube\bigl(\varphi\circ f,\varphi(S)\bigr)$ for any base isomorphism~$\varphi$.  In particular, we obtain the same cubes based at $[f]$ no matter what representative we choose for~$[f]$.

\begin{proposition}The intersection of two cubes is either empty or is a common face of each.
\end{proposition}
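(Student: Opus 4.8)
The plan is to exploit the description of cubes and their faces given just above: the cube $\cube(f,S)$ is exactly the closed interval $\bigl[[f],[x_S\circ f]\bigr]$ in the poset $\bigl(K^0(\G),\preceq\bigr)$, and its faces are exactly the closed subintervals of this interval. Writing $\cube(f,S)=[a_1,b_1]$ and $\cube(f',S')=[a_2,b_2]$, it therefore suffices to show that if these two intervals meet then $[a_1,b_1]\cap[a_2,b_2]=[m,c]$ for some $m\preceq c$ satisfying $a_i\preceq m$ and $c\preceq b_i$ for $i=1,2$; such an $[m,c]$ is automatically a closed subinterval of each cube, hence a common face. (Recall also that ``cube'' here literally denotes a subset of $K^0(\G)$, so the intersection of two cubes is just the intersection of these vertex sets, and there is nothing geometric to verify.)

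First I would pin down the minimum of the intersection. Assume it is nonempty and fix $z$ in it. By Proposition~\ref{prop:lub}, the pair $a_1$, $a_2$ has a least upper bound $m$ in $K^0(\G)$; since $z$ dominates both $a_1$ and $a_2$ we have $m\preceq z\preceq b_1$ and $m\preceq z\preceq b_2$, while $a_1,a_2\preceq m$ by construction, so $m$ lies in $[a_1,b_1]\cap[a_2,b_2]$. Every point of the intersection dominates $a_1$ and $a_2$, hence dominates $m$, so $m$ is the minimum. Next I would pin down the maximum, and here is the one point that needs care: $K^0(\G)$ is not known to admit greatest lower bounds, so one cannot simply take a meet of the two apexes $b_1,b_2$. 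Instead, observe that the intersection $\cube(f,S)\cap\cube(f',S')$ is a \emph{finite} subset of $K^0(\G)$ and so has a least upper bound $c$, by the consequence of Proposition~\ref{prop:lub} recorded at the end of Section~\ref{subsec:partial order}. Since each $b_i$ is an upper bound for the whole intersection, $c\preceq b_i$; and $m\preceq c$, so $a_i\preceq c\preceq b_i$, which puts $c$ in both cubes and makes it their common maximum.

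With $m$ and $c$ in hand, the equality $\cube(f,S)\cap\cube(f',S')=[m,c]$ is routine: ``$\subseteq$'' is immediate because $m$ and $c$ are the minimum and maximum, and ``$\supseteq$'' holds because $m\preceq z\preceq c$ forces $a_i\preceq m\preceq z\preceq c\preceq b_i$, i.e.\ $z\in[a_i,b_i]$, for $i=1,2$. Finally, $[m,c]$ is a closed subinterval of $[a_1,b_1]$ and of $[a_2,b_2]$, hence a face of each cube by the description of faces recalled above; concretely, if $m=[x_T\circ f]$ with $T\subseteq S$ and $c=[x_{T'}\circ f]$ with $T\subseteq T'\subseteq S$, then $[m,c]=\cube\bigl(x_T\circ f,\,T'\setminus T\bigr)$, which has exactly the form of a face of $\cube(f,S)$. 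I do not expect any genuine obstacle beyond the meet-versus-join subtlety flagged above, which is precisely where the interval description of cubes and the existence of finite least upper bounds do the real work.
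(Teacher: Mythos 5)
Your proposal is correct and follows essentially the same route as the paper's proof: take the least upper bound of the two bases (via Proposition~\ref{prop:lub}) to get the minimum of the intersection, take the least upper bound of the finite intersection itself to get its maximum, and conclude that the intersection is the closed interval between them, hence a common face. The only difference is that you spell out the final identification of the intersection with an interval, which the paper leaves implicit.
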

\begin{proof}Let $[b_1,a_1]$ and $[b_2,a_2]$ be two cubes with bases $b_1,b_2$ and apexes $a_1,a_2$, and suppose that the cubes intersect at some vertex~$v$. By Proposition~\ref{prop:lub}, the vertices $b_1$ and $b_2$ have a least upper bound~$b_3$.  Then $b_3 \in [b_1,v] \subseteq [b_1,a_1]$ and similarly $b_3 \in [b_2,a_2]$, so $b_3$ lies in both cubes. Let $a_3$ be the least upper bound of all the vertices in the intersection of the two cubes.  Then again $a_3$ must lie in both cubes, and $[b_1,a_1]\cap [b_2,a_2]$ is precisely the cube $[b_3,a_3]$, which is a face of each.
\end{proof}

We conclude that the sets $\cube(f,S)$ form an abstract cubical complex (see~\cite{Farley1}).  Let $K(\G)$ be the geometric realization of this abstract complex, and note that the $1$-skeleton of $K(\G)$ is indeed $K^1(\G)$, since the directed edges $\bigl([f],[x_{\{e\}}\circ f]\bigr)$ of $K^1(\G)$ are precisely the $1$-cubes of the form $\cube(f,\{e\})$.

\begin{proposition}The complex $K(\G)$ is contractible.
\label{prop:contractible}
\end{proposition}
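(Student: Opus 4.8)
The plan is to exploit the partial order $\preceq$ on $K^0(\G)$ together with the fact (Proposition~\ref{prop:lub} and induction) that every finite subset of $K^0(\G)$ has a least upper bound. For each vertex $w \in K^0(\G)$ let $K_{\preceq w}$ be the full subcomplex of $K(\G)$ spanned by the vertices $[f]$ with $[f]\preceq w$; since the apex of a cube is its $\preceq$-maximum, $K_{\preceq w}$ is exactly the union of the cubes $\cube(f,S)$ whose apex $[x_S\circ f]$ precedes $w$. First I would observe that these subcomplexes cover $K(\G)$ — indeed $\cube(f,S)\subseteq K_{\preceq[x_S\circ f]}$ — and that they form a directed family: $w\preceq w'$ gives $K_{\preceq w}\subseteq K_{\preceq w'}$, and any two vertices $w,w'$ lie below their least upper bound $w\vee w'$, so $K_{\preceq w}\cup K_{\preceq w'}\subseteq K_{\preceq w\vee w'}$.

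The heart of the argument is to show each $K_{\preceq w}$ is contractible. Fixing a representative $h\colon X(G_0)\to X(G)$ of $w$, right-composition with $h$ is an order isomorphism identifying $\{[f]:[f]\preceq w\}$ with the poset of range-equivalence classes of contraction morphisms out of $X(G)$; this is a \emph{finite} poset $Q_w$ with maximum element (the class of the identity, corresponding to $w$), and $K_{\preceq w}$ is precisely its associated cube complex, the cubes being the sub-intervals that are Boolean lattices. Following Farley's method from \cite{Farley1,Farley2}, I would prove that $K_{\preceq w}$ collapses onto the closed star of the vertex $w$, which is star-shaped and hence contractible: order the cubes of $K_{\preceq w}$ not containing $w$ — those whose apex strictly precedes $w$ — by decreasing dimension and then by a linear extension of the apex order, and check that when such a cube is reached it has a free face, so that all cubes missing $w$ can be stripped off by a sequence of elementary collapses. (Alternatively one can put a discrete Morse function on the finite complex $K_{\preceq w}$, matching each cube $\cube(f,S)$ with $\cube(f,S\cup\{e\})$ or $\cube(f,S\setminus\{e\})$ for a canonically chosen edge $e$ of the codomain of $f$ determined by $w$, arranged so that $\{w\}$ is the unique critical cell.)

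Granting contractibility of the pieces, the conclusion is formal. Any continuous image of a sphere in $K(\G)$ meets only finitely many cubes, hence lies in a finite subcomplex; its finitely many vertices are bounded above by their least upper bound $w$, so it lies in $K_{\preceq w}$, where it is null-homotopic. Thus $\pi_n\bigl(K(\G)\bigr)=0$ for all $n$, and since $K(\G)$ is a CW complex it is contractible by Whitehead's theorem.

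I expect the real obstacle to be the contractibility of the local pieces $K_{\preceq w}$: one has to arrange the collapsing scheme (or the Morse matching) so that it is acyclic and actually terminates at $\{w\}$, and the existence of the needed free faces rests on a careful analysis of how the collapsible subgraphs of $G$ interact — the relevant structural point being that every interval of $Q_w$ is a distributive lattice, the cubical ones being exactly the Boolean intervals. This is also where the expanding hypothesis on $\R$ is used, both to guarantee that cubes are genuine Boolean intervals and to supply the least upper bounds of Proposition~\ref{prop:lub}; everything else in the argument is formal.
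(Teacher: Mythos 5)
Your overall architecture is sound and genuinely different from the paper's: you exhaust $K(\G)$ by the down-sets $K_{\preceq w}$, which form a directed family by Proposition~\ref{prop:lub}, and finish by compactness plus Whitehead, whereas the paper covers $K(\G)$ by the up-sets $U(v)=\{[g]\in K^0(\G) : v\preceq[g]\}$, shows each $U(v)$ is contractible by a rank-filtration collapse, and applies a nerve argument (the intersections being $U$ of least upper bounds). The covering, directedness, and limiting parts of your argument are fine. The genuine gap is precisely the step you yourself flag and then defer: contractibility of $K_{\preceq w}$. Neither sketch closes it. ``Order the cubes missing $w$ by decreasing dimension and a linear extension of the apex order and check each has a free face'' is an assertion, not an argument: nothing identifies the free faces or shows this order is a collapsing order. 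In the Morse-matching variant you never say what the ``canonically chosen edge $e$ determined by $w$'' is, why it exists, why the matching is acyclic, or why $\{w\}$ is the only critical cell. You also locate the difficulty in the wrong place: the interaction of overlapping collapsible subgraphs (and distributivity of intervals) is the relevant issue for the paper's up-set collapse and for the descending links of Section~\ref{sec:finiteness}, not for your step; what your step needs is a description of which expansions of a vertex $u\prec w$ stay below $w$.

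Concretely, the missing lemma is this. Fix $u=[f]\preceq w$ with $f\colon X(G_0)\to X(G_u)$, write $w=[y\circ f]$ with $y$ an expansion morphism, let $E$ be the domain graph of the reduced graph pair diagram of $y$ (unique by Proposition~\ref{prop:UniqueReduced}, cf.\ Lemma~\ref{lem:unique}), and let $D_u\subseteq E(G_u)$ be the set of edges of $G_u$ that are actually expanded in $E$. Then $[x_S\circ f]\preceq w$ if and only if $S\subseteq D_u$; consequently the cubes of $K_{\preceq w}$ with base $u$ are exactly the $\cube(f,S)$ with $S\subseteq D_u$, and $D_u\neq\emptyset$ whenever $u\prec w$ strictly. (The ``only if'' direction is exactly the reduced-diagram argument of Lemma~\ref{lem:unique}: the reduced diagram of $x_E\circ x_S^{-1}$ has both sides equal to the least common expansion of $G_u\exp S$ and $E$, and this is an expansion morphism only when $E$ already expands every edge of $S$.) With this in hand your second sketch does work: choose $e_u\in D_u$ for each $u\prec w$ and match $\cube(f,S)$ with $\cube(f,S\,\triangle\,\{e_u\})$ for $S\subseteq D_u$; the matching preserves the base of each cube and base rank is non-decreasing along V-paths, giving acyclicity, with $\{w\}$ the unique critical cell. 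Equivalently, run Bestvina--Brady Morse theory on $K_{\preceq w}$ with Morse function $-\rank$: the relevant link of each $u\prec w$ is the full simplex on $D_u$, hence contractible, so $K_{\preceq w}$ deformation retracts to $\{w\}$. Without the displayed equivalence your collapsing scheme has no foundation, so as written the proposal is not a proof, though its skeleton can be completed along these lines; the paper's up-set/nerve route avoids this computation but pays for it in the free-face analysis of its own rank filtration.
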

\begin{proof}We will follow the nerve cover argument for the Farley complex, which the authors learned from K.~Bux.  For $v\in K^0(\G)$, let $U(v)$ denote the subcomplex of $K(\G)$ induced by the set of all vertices $w\in K^0(\G)$ for which $v\preceq w$.  We claim that each $U(v)$ is contractible.

To see this, consider the filtration $\{U_k(v)\}$ of $U(v)$, where $U_k(v)$ is the subcomplex induced by all vertices of rank~$\leq k$.  For $k > \mathrm{rank}(v)$, note that each vertex of rank $k$ in $U_k(v)$ is the apex of a maximal cube.  Then $U_k(v)$ can be collapsed onto $U_{k-1}(v)$, since each vertex of rank $k$ is a free face.  It follows that each $U_k(v)$ is contractible, and hence $U(v)$ is contractible.

Now, if $v_1,\ldots, v_n \in K^0(\G)$, then $U(v_1) \cap \cdots \cap U(v_n) = U(w)$, where $w$ is the least upper bound of $v_1,\ldots,v_n$.  In particular, $U(v_1)\cap\cdots\cap U(v_n)$ is nonempty and contractible for all $v_1,\ldots,v_n$.  Then $K(\G)$ is homotopy equivalent to the nerve of the cover $\{U(v) \mid v\in K^0(\G)\}$, which is an infinite-dimensional simplex.
\end{proof}

For the following theorem, define the \newword{support} of a simple expansion with domain $X(G)$ to be the edge of $G$ that it expands, and the support of a simple contraction with domain $X(G)$ to be the set of edges of the corresponding collapsible subgraph of~$G$.

\begin{theorem}\label{thm:Links}Let $[f]$ be a vertex of $K(\G)$, and let\/ $[x_1\circ f],\ldots,[x_n\circ f]$ be distinct vertices adjacent to\/ $[f]$, where each $x_i$ is either a simple contraction or a simple expansion.  Then the vertices\/ $[f],[x_1\circ f],\ldots,[x_n\circ f]$ lie in a common cube of $K(\G)$ if and only if the supports of\/ $x_1,\ldots,x_n$ are pairwise disjoint.
\end{theorem}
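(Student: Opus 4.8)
The plan is to prove the two implications separately, using throughout the dictionary between cubes, expansions, and the partial order $\preceq$ from this section. Write $f\colon X(G_0)\to X(G)$, and recall that the support of $x_i$ is a single edge of $G$ when $x_i$ is a simple expansion and is the edge set of a collapsible subgraph of $G$ when $x_i$ is a simple contraction; note that whether the supports are pairwise disjoint does not depend on the chosen representative of $[f]$, since conjugating the $x_i$ by a base isomorphism carries supports to supports. For the forward implication (common cube $\Rightarrow$ disjoint supports), suppose $[f],[x_1\circ f],\dots,[x_n\circ f]$ all lie in a cube $\cube(h,S)$, say $[f]=[x_U\circ h]$ and $[x_i\circ f]=[x_{U_i}\circ h]$ with $U,U_i\subseteq S$. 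Since right-composition with $f$ is an order isomorphism of the relevant range-equivalence posets, I may assume $f=x_U\circ h$, so that $G=(\operatorname{cod}h)\exp U$ and every edge of $G$ is uniquely either an edge of $\operatorname{cod}h$ not lying in $U$ (a \emph{plain} edge) or an edge $s\zeta$ with $s\in U$ and $\zeta\in E(R)$. Because the subgraph of $K^1(\G)$ induced on $\cube(h,S)$ is a genuine $|S|$-cube and each $\{[f],[x_i\circ f]\}$ is an edge of $K^1(\G)$, the set $U_i$ differs from $U$ in exactly one element $s_i\in S$, with $U_i=U\cup\{s_i\}$ and $s_i\notin U$ when $x_i$ is an expansion and $U_i=U\setminus\{s_i\}$ and $s_i\in U$ when $x_i$ is a contraction; the $s_i$ are pairwise distinct because the vertices $[x_i\circ f]$ are. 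Unwinding the definitions of $x_{U_i}$ and $x_U$ then identifies $\supp(x_i)$ with the plain edge $s_i$ when $x_i$ is an expansion and with $\{s_i\zeta:\zeta\in E(R)\}$ when $x_i$ is a contraction. Distinct plain edges are disjoint, the sets $\{s\zeta:\zeta\in E(R)\}$ for distinct $s\in U$ are disjoint, and a plain edge is never of the form $s\zeta$, so the supports are pairwise disjoint.

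For the converse, assume the supports are pairwise disjoint and partition the indices into the set $E$ of expansions (with $\supp(x_i)=\{e_i\}$) and the set $C$ of contractions (with $\supp(x_j)=S_j$ a collapsible subgraph). Disjointness says the edges $e_i$ are distinct, the $S_j$ are pairwise edge-disjoint, and no $e_i$ lies in any $S_j$. The first step is to collapse all the $S_j$ at once: collapsing one $S_j$ leaves the edges and relevant incidences of every other $S_{j'}$ undisturbed, since the interior vertices of $S_{j'}$ are untouched, so there are a graph $H$ and a contraction morphism $c\colon X(G)\to X(H)$ with $G=H\exp\{\epsilon_j:j\in C\}$, where $S_j$ collapses to the edge $\epsilon_j$, and each $e_i$ persists as a plain edge of $H$ disjoint from all the $\epsilon_j$. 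Set $g:=c\circ f$ and $S:=\{\epsilon_j:j\in C\}\cup\{e_i:i\in E\}\subseteq E(H)$, a set of $n$ distinct edges; the claim is that $\cube(g,S)$ is the required common cube. Indeed, $T=\{\epsilon_j:j\in C\}$ gives $[x_T\circ g]=[f]$, since re-expanding all the $\epsilon_j$ inverts $c$ up to a base isomorphism; $T=\{\epsilon_j:j\in C\}\setminus\{\epsilon_{j_0}\}$ gives $[x_T\circ g]=[x_{j_0}\circ f]$, since the composite $x_T\circ c$ is range-equivalent to the simple contraction $x_{j_0}$ (collapsing everything and re-expanding all but $\epsilon_{j_0}$ has the net effect of collapsing $S_{j_0}$ only); and $T=\{\epsilon_j:j\in C\}\cup\{e_{i_0}\}$ gives $[x_T\circ g]=[x_{i_0}\circ f]$, since re-expanding all the $\epsilon_j$ returns to $G$ and the further expansion of $e_{i_0}$ commutes with the rest. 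Each of these identifications rests only on order-independence of expansions (Example~\ref{ex:Expansion}) together with uniqueness of reduced graph pair diagrams (Proposition~\ref{prop:UniqueReduced}).

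The main obstacle is bookkeeping rather than conceptual: in both directions one must pin down the correspondence between the abstract index set $S$ of a cube $\cube(\cdot,S)$ and the literal edges of the various graphs, and verify that ``collapse everything, then partially re-expand'' produces exactly the morphism claimed, up to range equivalence. Each such verification is a direct consequence of the unique-reduced-graph-pair-diagram principle together with the commuting of expansions, but the claims must be invoked with care, and the loop case (collapsible subgraphs isomorphic to $\Rloop$) should be noted to go through identically, since the edges of $\Rloop$ are still indexed by $E(R)$. Once the edge-name combinatorics of $(\operatorname{cod}h)\exp U$ and of $H\exp\{\epsilon_j:j\in C\}$ are in place, both implications fall out, and the rest of the argument is routine.
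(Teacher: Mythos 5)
Your proposal is correct and follows essentially the same route as the paper: the forward direction reads off the supports as $\{\varphi(e)\}$ or $\{\varphi(e\zeta)\mid\zeta\in E(R)\}$ from the cube combinatorics, and the converse builds the common cube $\cube(g,S)$ by collapsing all the contraction supports at once and then checking the three range-equivalence identifications, exactly as in the paper. The one point you should make explicit is that the simultaneous collapse $c$ must be performed via the characteristic maps $\chi_j$ of the given contractions, not merely via their supports, since range equivalence of simple contractions requires equal characteristics (this is why the paper defines the isomorphism $\varphi$ by $e_j\zeta\mapsto\chi_j(\zeta)$).
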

\begin{proof}Let $f\colon X(G_0) \to X(G)$, and suppose first that the given vertices lie in some $\cube(g,S)$.  Then $[f] = [x_T\circ g]$ for some $T\subseteq S$, and in particular $f = \varphi\circ (x_T\circ g)$ for some base isomorphism $\varphi$.  We also know that $[x_i\circ f] = [x_{T_i}\circ g]$ for each $i$, where $T_i \subseteq S$ is obtained from $T$ by adding or removing a single edge.  If $T_i = T \cup \{e\}$, then the support of $x_i$ is $\{\varphi(e)\}$.  If $T_i = T - \{e\}$, then the support of $x_i$ is $\{\varphi(e\edge) \mid \edge \in E(R)\}$.  These sets are clearly disjoint.

Now suppose that the supports of $x_1,\ldots,x_n$ are pairwise disjoint.  We can assume that $x_1,\ldots,x_m$ are simple contractions with characteristics $\chi_1,\ldots,\chi_m$, and $x_{m+1},\ldots,x_n$ are simple expansions with supports $\{e_{m+1}\},\ldots,\{e_n\}$.  Let $G'$ be the graph obtained from $G$ by replacing the image of each $\chi_i$ by an edge $e_i$ with the appropriate orientation, and let $T = \{e_1,\ldots,e_m\}$ and $S=\{e_1,\ldots,e_n\}$.  Then $G' \exp T$ is canonically isomorphic to~$G$, via the isomorphism $\varphi$ that acts as the identity on $E(G')-T$, and maps $e_i\edge$ to $\chi_i(\edge)$ for each $i\leq m$ and each edge $\edge$ of~$R$.

Let $g\colon X(G_0) \to X(G')$ be the rearrangement $x_T^{-1} \circ \varphi^{-1} \circ f$.  We claim that all of the desired vertices lie in $\cube(g,S)$.   Note first that $[f] = [x_T\circ g]$.  Next, for $i\leq m$, observe that the rearrangements $x_{T-\{e_i\}} \circ x_T^{-1} \circ \varphi^{-1}$ and $x_i$ with domain $X(G)$ are range equivalent.  Hence $[x_i\circ f] = [x_{T-\{e_i\}} \circ x_T^{-1} \circ \varphi^{-1} \circ f] =  [x_{T-\{e_i\}}\circ g]$.  If $i > m$, the rearrangements $x_{T\cup \{e_i\}} \circ x_T^{-1} \circ \varphi^{-1}$ and $x_i$ with domain $X(G)$ are range equivalent, and so $[x_i\circ f] = [x_{T\cup \{e_i\}} \circ x_T^{-1} \circ \varphi^{-1}\circ f] = [x_{T\cup \{e_i\}}\circ g]$.
\end{proof}

\begin{corollary}The complex $K(\G)$ is\/ $\CAT(0)$.
\label{cor:cat0}
\end{corollary}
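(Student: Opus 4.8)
The plan is to verify Gromov's link condition: a simply connected cubical complex is $\CAT(0)$ if and only if the link of every vertex is a flag simplicial complex. Both hypotheses are essentially in hand already. Simple connectivity is immediate from Proposition~\ref{prop:contractible}, which asserts the stronger fact that $K(\G)$ is contractible. So the content of the proof is the verification that each vertex link is flag, and this is read off directly from Theorem~\ref{thm:Links}.

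Concretely, I would fix a vertex $[f]$ with $f\colon X(G_0)\to X(G)$ and describe its link. Its vertices are the edges of $K^1(\G)$ incident on $[f]$; by the description of incoming edges in Subsection~\ref{subsec:contraction} together with the discussion of simple expansion morphisms in Subsection~\ref{subsec:complex}, these correspond to the simple contraction and simple expansion morphisms with domain $X(G)$ taken up to range equivalence --- equivalently, to the characteristic maps for $R$ in $G$ together with the edges of $G$. A finite collection of such morphisms $x_1,\dots,x_n$ determines link-vertices that span a simplex exactly when $[f],[x_1\circ f],\dots,[x_n\circ f]$ lie in a common cube of $K(\G)$, and Theorem~\ref{thm:Links} identifies this with the condition that the supports of $x_1,\dots,x_n$ be pairwise disjoint. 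By Theorem~\ref{thm:Links} applied with $n=2$, two of these link-vertices are joined by an edge precisely when the two corresponding supports are disjoint; since a family of sets is pairwise disjoint precisely when every two of its members are disjoint, it follows that $x_1,\dots,x_n$ span a simplex of the link if and only if they are pairwise joined by edges. That is exactly the flag condition. (That each link is a genuine simplicial complex, and not merely a simplicial poset, is part of the statement that the sets $\cube(f,S)$ form an abstract cubical complex.)

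With the link condition verified, Gromov's criterion gives that $K(\G)$ is $\CAT(0)$. I expect the only point that needs care to be that $K(\G)$ is generally infinite-dimensional --- the cube $\cube(f,S)$ has dimension $|S|$, and $|S|$ is unbounded as $G$ ranges over the graph family $\Gamma(\R)$ --- so one should invoke the form of Gromov's link condition valid for cube complexes of arbitrary dimension, as in Farley's treatment~\cite{Farley1,Farley2}, rather than the finite-dimensional version stated in~\cite{BriHae}. The link computation itself is, by contrast, an immediate consequence of Theorem~\ref{thm:Links}.
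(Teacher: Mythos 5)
Your proof is correct and follows the paper's argument exactly: contractibility comes from Proposition~\ref{prop:contractible}, the flag condition on vertex links is read off from Theorem~\ref{thm:Links} (pairwise disjointness of supports being a pairwise condition), and Gromov's link criterion finishes the job. Your added caution about needing the version of Gromov's criterion valid beyond the finite-dimensional setting is a reasonable refinement of the paper's bare citation of~\cite{BriHae}, but it does not change the route.
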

\begin{proof}We have shown that $K(\G)$ is contractible, and it follows from the previous theorem that the link of every vertex in $K(\G)$ is a flag complex.  Then $K(\G)$ is $\CAT(0)$ by Gromov's theorem (see Theorem II.5.20 in~\cite{BriHae}).
\end{proof}

\begin{remark} As mentioned previously, in the case where $\G$ is one of the three Thompson groups, the complex $K(\G)$ is precisely the associated Farley complex.  More generally, if the replacement graph $R$ has $n$ edges and the base graph $G_0$ has $k$ edges, then the complex $K(\G)$ embeds isometrically into the Farley complex for the generalized Thompson group~$V_{n,k}$ (see Remark~\ref{rem:generalizedthompsongroups}).
\end{remark}

\section{Finiteness Properties}
\label{sec:finiteness}

A group is of \newword{type $\boldsymbol{F_n}$} if it is the fundamental group of an aspherical CW complex with finite $n$-skeleton.  For example, a group is of type~$F_1$ if and only it is finitely generated, and a group is of type~$F_2$ if and only if it is finitely presented.  A group is of \newword{type $\boldsymbol{F_\infty}$} if it is of type~$F_n$ for all~$n$, i.e.~if it is the fundamental group of an aspherical CW complex with finitely many cells in each dimension.

Geoghegan and Brown proved in \cite{BroGeo} that Thompson's group $F$ is of type~$F_\infty$.  Later, Brown provided some necessary and sufficient conditions for a given group acting on a complex $K$ to be of type~$F_n$, and used these criterion to prove that several groups are of type~$F_\infty$, including Thompson's groups $T$ and~$V$ \cite{Bro}. Since then, it has become common to use the discrete Morse theory of Bestvina and Brady~\cite{BesBra} to verify Brown's conditions.

 In this section, we apply these techniques to the complex $K(\G)$ associated to a rearrangement group~$\G$.  This involves understanding the connectivity of the descending links of vertices in this complex with respect to a discrete Morse function.  We use this technique in Subsection~\ref{subsec:morse} to prove that various rearrangement groups are finitely generated. Proving further finiteness properties requires some new technology for analyzing the connectivity of flag complexes, which we develop in Subsection~\ref{subsec:connectivity}.  We apply this technology in Subsection~\ref{subsec:finfty} to prove the following theorem.

\begin{theorem} \label{thm:finfty}
Let $\mathcal{R}$ be a replacement system with finite branching whose replacement graph is connected, and let\/ $\Gamma(\mathcal{R})$ be the associated graph family.  Suppose that, for every $m \geq 1$, all but finitely many of the graphs in\/ $\Gamma(\mathcal{R})$ have at least $m$ different collapsible subgraphs.  Then the corresponding rearrangement group is of type~$F_\infty$.
\end{theorem}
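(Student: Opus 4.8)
The plan is to apply Brown's finiteness criterion~\cite{Bro}, combined with the discrete Morse theory of Bestvina and Brady~\cite{BesBra}, to the action of the rearrangement group $\G$ on the cubical complex $K(\G)$. By Corollary~\ref{cor:cat0} the complex $K(\G)$ is $\CAT(0)$, hence contractible; $\G$ acts on it by isometries; and the stabilizer of every cell is finite (a group of graph automorphisms), hence of type~$F_\infty$. The rank function $[f]\mapsto|E(G)|$ of Subsection~\ref{subsec:partial order} (where $f\colon X(G_0)\to X(G)$) is $\G$-invariant, and on each cube $\cube(f,S)$ it attains its minimum uniquely at the base $[f]$ and its maximum uniquely at the apex $[x_S\circ f]$, so it is a Morse function of the required type. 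For cocompactness of the sublevel complexes, note that $\G$ acts transitively on the set of range-equivalence classes of rearrangements with any fixed codomain graph, so the $\G$-orbits of vertices of $K(\G)$ are in bijection with the isomorphism classes of graphs in $\Gamma(\R)$; since an expanding replacement system has no isolated vertices, only finitely many such classes have a bounded number of edges, and likewise there are finitely many $\G$-orbits of cubes in each $K^{\leq n}$, so each $K^{\leq n}$ is $\G$-cocompact, and these exhaust $K(\G)$.

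By the Morse-theoretic form of Brown's criterion, $\G$ will be of type~$F_\infty$ as soon as we show that for every $k\geq 0$ only finitely many $\G$-orbits of vertices have descending link failing to be $k$-connected. The next step is to identify the descending link of a vertex $[f]$, with $f\colon X(G_0)\to X(G)$. The descending edges at $[f]$ correspond to the simple contractions with domain $X(G)$, equivalently (by the proposition of Subsection~\ref{subsec:contraction}) to the characteristic maps for $R$ in $G$; and by Theorem~\ref{thm:Links} together with the flagness of the links of $K(\G)$ (Corollary~\ref{cor:cat0}), the descending link is precisely the finite flag complex $L(G)$ whose vertices are the characteristic maps for $R$ in $G$, a set of which spans a simplex exactly when the corresponding collapsible subgraphs have pairwise disjoint edge sets. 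Thus the descending link depends only on~$G$, and it suffices to prove that the connectivity of $L(G)$ grows without bound as the number of collapsible subgraphs of $G$ grows; granting this, the hypothesis that for every $m$ all but finitely many graphs of $\Gamma(\R)$ have at least $m$ collapsible subgraphs forces every graph $G$ with $L(G)$ not $k$-connected to lie among finitely many isomorphism classes, hence gives finitely many $\G$-orbits of such vertices, and Brown's criterion applies.

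This last connectivity estimate is the main obstacle, and it is where connectedness of $R$, the finite-branching hypothesis, and the technology of Subsection~\ref{subsec:connectivity} are essential: a priori a graph may have arbitrarily many collapsible subgraphs that pairwise overlap, in which case $L(G)$ has no edges and is not even connected, so the number of collapsible subgraphs cannot be used naively. The approach I would take is to exploit the rigidity built into the definition of a characteristic map --- the requirement that every edge incident on the image of an interior vertex of $R$ lie in the image --- which, when $R$ is connected, makes each collapsible subgraph a connected subgraph with exactly $|E(R)|$ edges whose placement is tightly controlled by the images of the interior vertices. This locality should be combined with the general connectivity criteria of Subsection~\ref{subsec:connectivity}, presumably by running an auxiliary discrete Morse argument on $L(G)$ itself (for instance with respect to a linear ordering of $E(G)$), to show first that a graph with many collapsible subgraphs must contain a large family of pairwise disjoint ones, and then that a flag complex of this form built from such a large disjoint family is highly connected. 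Finally, Theorem~\ref{thm:vicsekfinfty} follows by checking directly from the description of $\Gamma(\R_n)$ in Example~\ref{ex:VicsekGraphFamily} that arbitrarily large graphs in the $n$th Vicsek family contain arbitrarily many collapsible subgraphs, so that the present theorem applies.
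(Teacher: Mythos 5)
Your setup is sound and matches the paper's framework: the rank Morse function on the $\CAT(0)$ complex $K(\G)$, cocompact sublevel complexes, and the identification of the descending link of a vertex $[f]$ (with $f\colon X(G_0)\to X(G)$) as the flag complex $\mathrm{Con}(G,\R)$ on the characteristic maps of $R$ into $G$ — this is exactly Theorem~\ref{thm:FinitenessProperties}. But the heart of the theorem is the step you leave as a sketch, and the sketch as stated does not work. You propose to show that a graph with many collapsible subgraphs contains a large pairwise disjoint family, and that ``a flag complex built from such a large disjoint family is highly connected.'' The second claim is false without further input: a large disjoint family only produces a large simplex in $\mathrm{Con}(G,\R)$, and a flag complex containing a huge simplex can still fail to be even connected (the Vicsek example in Figure~17 already has a contraction complex equal to $K_{6,6}$ together with four isolated vertices). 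High connectivity cannot be extracted from the simplex alone; one needs to know that \emph{every} vertex of $\mathrm{Con}(G,\R)$ is adjacent to almost all vertices of that simplex, i.e.\ that the simplex is a $k$-ground in the sense of Subsection~\ref{subsec:connectivity}, and your proposal never supplies this, nor does the vague ``auxiliary discrete Morse argument on $L(G)$ with respect to a linear ordering of $E(G)$.''

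What closes the gap in the paper is a uniform overlap bound, Lemma~\ref{lem:bound}: since $R$ is connected and $\R$ has finite branching, any collapsible subgraph overlapping a given one lies in a ball of radius $2d$ (where $d=\mathrm{diam}(R)$) whose number of edges is bounded by $k^{2d}$ for the uniform degree bound $k$, so each collapsible subgraph overlaps at most $i=2^{k^{2d}}$ others, independently of $G\in\Gamma(\R)$. Passing from collapsible subgraphs to characteristic maps costs only a factor $j=\max\{|\mathrm{Aut}(R)|,|\mathrm{Aut}(\Rloop)|\}$, so every $\mathrm{Con}(G,\R)$ is uniformly $k'$-dense with $k'=(i+1)j-1$ fixed. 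Then Lemma~\ref{lem:DenseGrounded} and Theorems~\ref{thm:GroundedConnectivity} and~\ref{thm:DenseConnectivity} convert ``many vertices'' into ``$(n-1)$-connected'': once $G$ has at least $nk'(k'+1)+1$ collapsible subgraphs, $\mathrm{Con}(G,\R)$ is $(n-1)$-connected, and the hypothesis of the theorem plus Theorem~\ref{thm:FinitenessProperties} give type $F_n$ for every $n$. So the missing idea in your write-up is precisely this uniform local bound (which is where finite branching and connectedness of $R$ actually enter) and its use to make the contraction complexes uniformly dense, hence grounded; without it your reduction to ``connectivity grows with the number of collapsible subgraphs'' remains unproved.
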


Here, $\mathcal{R}$ has \newword{finite branching} if there exists an upper bound on the degrees of vertices in the full expansion sequence for $\mathcal{R}$.  For example, if the initial and terminal vertices of~$R$ both have degree one, then $\R$ has finite branching.

As an application of this theorem, we prove in Subsection~\ref{subsec:finfty} that all of the rearrangement groups for fractals in the Vicsek family are of type~$F_\infty$.

\subsection{Brown's Criterion and Bestvina-Brady Morse Theory}
\label{subsec:morse}

If $K$ is any cubical complex, a \newword{Morse function} on $K$ is a map $\mu \colon K\to \mathbb{R}$ satisfying the following conditions:
\begin{enumerate}
\item The image under $\mu$ of the vertex set of $K$ is discrete.
\smallskip
\item No two adjacent vertices have the same value under~$\mu$.\smallskip
\item The map $\mu$ restricts to an affine linear function on each cube of~$K$.
\end{enumerate}
Given a Morse function $\mu$ on $K$ and a real number $t$, the corresponding \newword{sublevel complex} $K^{\leq t}$ is the subcomplex of $K$ consisting of all cubes that are entirely contained in $\mu^{-1}\bigl((-\infty,t]\bigr)$.  The \newword{descending link} $\mathrm{lk}_\downarrow (v,K)$ of a vertex $v$ in $K$ is its link in the appropriate sublevel complex, i.e.
\[
\mathrm{lk}_\downarrow (v,K) \;=\; \mathrm{lk}\bigl(v,K^{\leq \mu(v)}\bigr).
\]
The following theorem is a combination of the Bestvina-Brady Morse lemma~\cite{BesBra} with Brown's criterion for finiteness properties~\cite{Bro}.

\begin{theorem}[Bestvina-Brady-Brown]\label{thm:MorseTheory}  Let\/ $\G$ be a group acting properly by isometries on a contractible cube complex~$K$, let $\mu$ be a\/ $\G$-invariant Morse function on~$K$, and let $n\geq 1$.  Suppose that
\begin{enumerate}
\item Each sublevel complex $K^{\leq t}$ has finitely many orbits of cubes, and\smallskip
\item There exists a\/ $t \in \mathbb{R}$ so that the descending link of each vertex in $\mu^{-1}\bigl([t,\infty)\bigr)$ is $(n-1)$-connected.
\end{enumerate}
Then\/ $\G$ is of type $F_n$.\quad\qedsymbol
\end{theorem}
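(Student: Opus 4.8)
The plan is to derive the theorem by assembling its two named ingredients, Brown's finiteness criterion from~\cite{Bro} and the Morse lemma of Bestvina and Brady from~\cite{BesBra}, around the filtration of $K$ by sublevel complexes. Fix a cofinal increasing sequence of real numbers $t_0 < t_1 < \cdots$ with $t_i \to \infty$, chosen to avoid the discrete set of values taken by $\mu$ on the vertices, and consider the subcomplexes $K^{\leq t_i}$. Because $\mu$ is affine on each cube and each cube is compact, every cube lies in some $K^{\leq t_i}$, so $\bigcup_i K^{\leq t_i} = K$. Since $\mu$ is $\G$-invariant, each $K^{\leq t_i}$ is a $\G$-invariant subcomplex, and by hypothesis~(1) each has only finitely many $\G$-orbits of cubes. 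Finally, because the action is proper, every cube stabilizer is finite and hence of type $F_\infty$. This is precisely the setup to which Brown's criterion applies.

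The next step is to control the connectivity of this filtration using the Morse lemma. The Bestvina--Brady lemma states that, up to homotopy, $K^{\leq t_{i+1}}$ is obtained from $K^{\leq t_i}$ by coning off the descending links $\mathrm{lk}_\downarrow(v,K)$ of the vertices $v$ with $t_i < \mu(v) \leq t_{i+1}$. If each such descending link is $(n-1)$-connected, then coning it off is, up to homotopy, the attachment of cells of dimension at least $n+1$, so the inclusion $K^{\leq t_i} \hookrightarrow K^{\leq t_{i+1}}$ is $n$-connected: it induces isomorphisms on $\pi_k$ for $k \leq n-1$ and an epimorphism on $\pi_n$. By hypothesis~(2) there is a threshold $t$ past which every vertex has an $(n-1)$-connected descending link; after reindexing the cofinal sequence, we may assume $t_0 \geq t$, so that all inclusions in the filtration are $n$-connected.

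It remains to translate this into the essential $(n-1)$-connectivity required by Brown's criterion. For each $k \leq n-1$ the transition maps $\pi_k(K^{\leq t_i}) \to \pi_k(K^{\leq t_{i+1}})$ are isomorphisms, and since homotopy groups commute with the colimit over a filtration of a CW complex by subcomplexes, their common value is $\pi_k\bigl(\bigcup_i K^{\leq t_i}\bigr) = \pi_k(K)$. As $K$ is contractible this group vanishes, whence $\pi_k(K^{\leq t_i}) = 0$ for all $i$ and all $k \leq n-1$; in particular the filtration is essentially (indeed eventually) $(n-1)$-connected. With contractibility of $K$, cocompactness of the filtration, finiteness of the cube stabilizers, and essential $(n-1)$-connectivity all in hand, Brown's criterion yields that $\G$ is of type $F_n$.

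The substantive content lives entirely in the two cited theorems, so the work here is bookkeeping: matching hypotheses and invoking the Morse lemma correctly. The step I expect to require the most care is the second one---justifying that an $(n-1)$-connected descending link contributes only cells of dimension $\geq n+1$, and hence that the sublevel inclusions are $n$-connected---since this is where the index shift between $(n-1)$-connectivity of the links and $n$-connectedness of the inclusions must be tracked precisely in order to land at type $F_n$ rather than $F_{n-1}$ or $F_{n+1}$.
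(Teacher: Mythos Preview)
Your proposal is correct and matches the paper's approach exactly: the paper states this theorem with a trailing \qedsymbol\ and no proof, describing it only as ``a combination of the Bestvina--Brady Morse lemma with Brown's criterion for finiteness properties,'' so what you have written is a faithful and accurate unpacking of precisely that combination. Your bookkeeping---cocompactness and finite stabilizers for Brown's criterion, the Morse lemma to upgrade $(n-1)$-connected descending links to $n$-connected sublevel inclusions, and the colimit argument to conclude essential $(n-1)$-connectivity from contractibility of $K$---is sound, and the index tracking you flag as the delicate point is handled correctly.
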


We wish to apply the above theorem to prove finiteness properties for rearrangement groups.  We begin by describing the flag complexes that will arise as descending links in the associated complex.

\begin{definition}Let $\R$ be a replacement system, and let $G$ be a graph in the associated graph family.  The \newword{contraction complex of $\boldsymbol{G}$ with respect to~$\boldsymbol{\R}$}, denoted $\mathrm{Con}(G,\R)$, is the flag complex defined as follows:
\begin{enumerate}
\item There is one vertex in $\mathrm{Con}(G,\R)$ for each characteristic map of $R$ into~$G$.
\item Two vertices in $\mathrm{Con}(G,\R)$ are connected by an edge if the corresponding characteristic maps do not \textbf{overlap}, i.e.~if the images of the two characteristic maps are edge disjoint.
\end{enumerate}
\end{definition}

For example, if $\R$ is the basilica replacement system and $G$ is the graph shown in Figure~\ref{fig:BasilicaCollapsibleSubgraphs}, then $\mathrm{Con}(G,\R)$ is a path of length two. If $\R$ is the Vicsek replacement system and $G$ is the graph shown in Figure~\ref{fig:VicsekCollapsibleSubgraphs}, then $\mathrm{Con}(G,\R)$ has sixteen vertices corresponding to the sixteen possible characteristic maps.  Each of the six vertices corresponding to the leftmost collapsible subgraph is connected by edges to each of the six vertices corresponding to the rightmost collapsible subgraph, so $\mathrm{Con}(G,\R)$ is the complete bipartite graph $K_{6,6}$ together with four isolated vertices.

In the context of rearrangement groups and the complex defined in Section~\ref{sec:complexes}, Theorem~\ref{thm:MorseTheory} takes the following form.

\begin{theorem}\label{thm:FinitenessProperties}Let\/ $\R$ be a replacement system, and let $n\geq 1$.  Suppose that\/ $\mathrm{Con}(G,\R)$ is $(n-1)$-connected for all but finitely many isomorphism types of graphs~$G\in\Gamma(\R)$.  Then the corresponding rearrangement group is of type~$F_n$.  \end{theorem}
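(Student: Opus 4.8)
The plan is to apply the Bestvina--Brady--Brown machinery of Theorem~\ref{thm:MorseTheory} to the $\CAT(0)$ cubical complex $K(\G)$ constructed in Section~\ref{sec:complexes}, taking as Morse function the rank. Recall that $\G$ acts properly by isometries on $K(\G)$ and that $K(\G)$ is contractible by Proposition~\ref{prop:contractible}. Define $\mu\colon K(\G)\to\mathbb{R}$ on vertices by $\mu([f])=|E(G)|$ whenever $f\colon X(G_0)\to X(G)$, and extend affinely over each cube. First I would record that $\mu$ is a well-defined $\G$-invariant Morse function: the rank is independent of the chosen representative, right-composition by an element of $\G$ leaves the codomain graph unchanged, the values lie in $\mathbb{N}$ (hence form a discrete set), a simple expansion raises the rank by $|E(R)|-1\geq 1$ so adjacent vertices get distinct values, and on a cube $\cube(f,S)$ the vertex $[x_T\circ f]$ has rank $|E(G)|+|T|(|E(R)|-1)$, which is affine in the cube coordinates.

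Next I would verify condition~(1) of Theorem~\ref{thm:MorseTheory}, that every sublevel complex $K^{\leq t}$ has finitely many $\G$-orbits of cubes. The point is that two cubes $\cube(f,S)$ and $\cube(f',S')$ lie in the same $\G$-orbit precisely when the pairs $(G,S)$ and $(G',S')$ are isomorphic: a graph isomorphism $\psi\colon G\to G'$ carrying $S$ to $S'$ gives a base isomorphism with $\cube(f,S)=\cube(\psi\circ f,\psi(S))$, and then the element $(\psi\circ f)^{-1}\circ f'$ of $\G$ moves one cube to the other; conversely the $\G$-action and the base-isomorphism reparametrization both preserve the isomorphism type of $(G,S)$. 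A cube inside $K^{\leq t}$ has its apex, hence its whole vertex set, of rank $\leq t$, which bounds $|E(G)|$ and $|S|$; since graphs in $\Gamma(\R)$ have no isolated vertices, only finitely many isomorphism types of such pairs occur.

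The heart of the argument is identifying the descending links. Fix $[f]$ with $f\colon X(G_0)\to X(G)$. Its neighbours of strictly smaller rank are exactly the classes $[c\circ f]$ with $c$ a simple contraction with domain $X(G)$, and by the analysis of Subsection~\ref{subsec:contraction} these are in bijection with the characteristic maps of $R$ in $G$. A cube containing $[f]$ lies in $K^{\leq\mu([f])}$ only if its apex has rank $\leq\mu([f])$; since the apex is the unique rank-maximal vertex of a cube, this forces $[f]$ itself to be the apex, and conversely every cube with apex $[f]$ lies in $K^{\leq\mu([f])}$ because $\mu\leq\mu([f])$ at all its vertices and $\mu$ is affine on it. Such a cube is an interval $\bigl[[g],[f]\bigr]$ and, unwinding the definition of cubes, corresponds to a set of pairwise edge-disjoint collapsible subgraphs of $G$, each carrying the characteristic map induced by the expansion; by Theorem~\ref{thm:Links} the neighbours $[c_1\circ f],\dots,[c_k\circ f]$ span a simplex of the descending link exactly when the supports of $c_1,\dots,c_k$ --- i.e.\ the images of the associated characteristic maps --- are pairwise disjoint. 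Hence $\mathrm{lk}_\downarrow([f],K)$ is the flag complex on the characteristic maps of $R$ in $G$ with two of them joined when they do not overlap, which is precisely $\mathrm{Con}(G,\R)$.

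Finally I would assemble the conclusion. By hypothesis there are only finitely many isomorphism types $G_1,\dots,G_N\in\Gamma(\R)$ for which $\mathrm{Con}(G_i,\R)$ fails to be $(n-1)$-connected; put $t=1+\max_i|E(G_i)|$. Any vertex $[f]$ with $\mu([f])\geq t$ has codomain graph $G$ with $|E(G)|>|E(G_i)|$ for all $i$, so $G\not\cong G_i$, so $\mathrm{lk}_\downarrow([f],K)\cong\mathrm{Con}(G,\R)$ is $(n-1)$-connected; this is condition~(2) of Theorem~\ref{thm:MorseTheory}, which then gives that $\G$ is of type $F_n$. I expect the main obstacle to be the third step: pinning down that the descending link of $[f]$ involves exactly the cubes having $[f]$ as apex, and matching the resulting simplices --- indexed by sets of pairwise non-overlapping characteristic maps, not merely collapsible subgraphs --- with the flag complex $\mathrm{Con}(G,\R)$. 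Once this dictionary is in place, the orbit count and the final assembly are routine applications of the cited results.
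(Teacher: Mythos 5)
Your proposal is correct and follows essentially the same route as the paper: the rank Morse function on $K(\G)$, finitely many orbits of cubes in each sublevel complex, identification of descending links with the contraction complexes $\mathrm{Con}(G,\R)$ via Theorem~\ref{thm:Links}, and then Theorem~\ref{thm:MorseTheory}. You merely supply more detail than the paper at two points --- classifying cube orbits directly by isomorphism types of pairs $(G,S)$ where the paper counts vertex orbits and invokes local finiteness, and spelling out the apex argument behind the descending-link identification that the paper leaves to ``it follows from Theorem~\ref{thm:Links}''.
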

\begin{proof}Let $\G$ be the rearrangement group associated to~$\R$, and let $K$ be the cubical complex $K(\G)$ defined in Section~\ref{sec:complexes}.  Define a Morse function $\mu\colon K\to\mathbb{R}$ by letting $\mu(v)$ be the rank of $v$ for each vertex~$v$, and then extending linearly to the cubes. Since the action of the rearrangement group $\G$ on the vertices of~$K$ preserves rank, this Morse function is $\G$-invariant.

We claim that each sublevel complex $K^{\leq t}$ has finitely many orbits of cubes.  Let $f_1 \colon X(G_0) \to X(G_1)$ and $f_2 \colon X(G_0) \to X(G_2)$ be rearrangements representing two vertices of the same rank.  If $G_1$ and $G_2$ are isomorphic, then for any base isomorphism $\varphi\colon G_1\to G_2$ the element $f_2^{-1} \circ \varphi \circ f_1$ of $\G$ takes $[f_2]$ to~$[f_1]$.  But there are finitely many isomorphism classes of graphs in $\Gamma(\R)$ with $t$ or fewer edges, so there are only finitely many orbits of vertices in~$K^{\leq t}$.  Since $K^{\leq t}$ is locally finite, it follows that $K^{\leq t}$ has only finitely many orbits of cubes.

Now let $t\in\mathbb{R}$ so that the contraction complex of any graph in $\Gamma(\R)$ with at least $t$ edges is $(n-1)$-connected.  If $[f]$ is any vertex in $\mu^{-1}\bigl([t,\infty)\bigr)$, then $f\colon X(G_0) \to X(G)$ for some graph $G$ with at least $t$ edges.  It follows from Theorem~\ref{thm:Links} that the descending link of $[f]$ in $K$ is isomorphic to $\mathrm{Con}(G,\R)$, and hence $\mathrm{link}_\downarrow([f],K)$ is $(n-1)$-connected.  By Theorem~\ref{thm:MorseTheory}, we conclude that $\G$ has type~$F_n$.
\end{proof}

\begin{example}[Finiteness Properties of the Basilica Rearrangement Group]
\label{ex:BasilicanotFinfty}
Using Theorem~\ref{thm:FinitenessProperties}, it is easy to show that the basilica Thompson group $T_B$ is finitely generated.  In particular, there are only three isomorphism types of graphs in the graph family for the basilica whose corresponding contraction complexes are disconnected.  These graphs are shown in Figure~\ref{fig:ThreeDisconnectedBasilica}.  A specific four-element generating set for $T_B$ is given in~\cite{BeFo}.
\begin{figure}[b]
\vgraphics{BasilicaBaseClear}\;\;
\hfill
\vgraphics{BasilicaDifferentBase}
\hfill
\vgraphics{BasilicaBase4}
\caption{The three graphs in the basilica family whose corresponding contraction complexes are disconnected.}
\label{fig:ThreeDisconnectedBasilica}
\end{figure}
\begin{figure}
\centering
\includegraphics{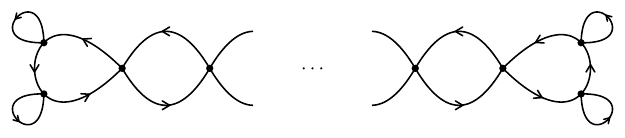}
\caption{A graph in the basilica graph family whose descending link is not simply connected.}
\label{fig:BasilicaNotFP}
\end{figure}

Theorem~\ref{thm:FinitenessProperties} cannot be used to show that $T_B$ is finitely presented.  In particular, consider the family of graphs shown in Figure~\ref{fig:BasilicaNotFP}, where there may be any number of intermediate two-cycles.  Each such graph has four vertices in its contraction complex, corresponding to the two collapsible subgraphs on the left and the two collapsible subgraphs on the right.  Two of these vertices are connected by an edge if and only if they are on different sides, so the contraction complex is a four-cycle, and is therefore not simply connected.  Witzel and Zaremsky have recently shown that $T_B$ is in fact not finitely presented~\cite{WiZa}.

Similar arguments can be made for all of the rearrangement groups in the rabbit family (see Example~\ref{ex:rabbit}).  That is, all of the rearrangement groups in this family are finitely generated, with only finitely many disconnected contraction complexes, but Theorem~\ref{thm:FinitenessProperties} cannot be used to show that they are finitely presented.
\end{example}

\begin{figure}
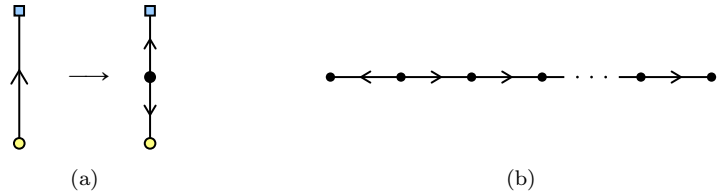

\centering
\subfloat[\label{subfig:FsemiZ2Replacement}]{\vgraphics{EdgeReplacement1} \quad\longvarrow\quad \vgraphics{FsemiZ2Replacement}}
\qquad\qquad\qquad
\subfloat[\label{subfig:BadLinks}]{\vgraphics{BadLinks}}
\caption{(a) A replacement rule for the replacement system whose rearrangement group is $F\rtimes\mathbb{Z}_2$.  (b) An infinite family of graphs whose contraction complexes are disconnected.}
\end{figure}
\begin{remark}The converse of Theorem~\ref{thm:FinitenessProperties} does not hold.  For example, consider the replacement system $\R$ having a single edge as the base graph, with the replacement rule shown in Figure~\subref*{subfig:FsemiZ2Replacement}.  The rearrangement group for~$\R$ is a semidirect product of Thompson's group $F$ with a cyclic group of order two, which is of type~$F_\infty$.  However, Figure~\subref*{subfig:BadLinks} shows an infinite family of graphs in the corresponding graph family whose corresponding contraction complexes consist of two disconnected vertices.
\end{remark}

%
%
\begin{remark}
Many rearrangement groups are non-finitely generated.  For example, consider the basilica replacement rule with a single edge as a base graph.  The rearrangement group is isomorphic to the union of the sequence
\[
F \;\;\subset\;\; F \wr_D F \;\;\subset\;\; F \wr_D F \wr_D F \;\;\subset\;\; \cdots
\]
of restricted wreath products, where $F$ is Thompson's group and $D$ is the set of all dyadic points in the interval~$(0,1)$. This group is not finitely generated, being an ascending union of proper subgroups.
\end{remark}

\subsection{Connectivity of Flag Complexes}
\label{subsec:connectivity}

In order to efficiently apply Theorem \ref{thm:MorseTheory} to rearrangement groups, we introduce two new pieces of technology for assessing the connectivity of flag complexes.

\begin{definition}Let $X$ be a simplical complex, and let $k\geq 1$.
\begin{enumerate}
\item  A simplex $\Delta$ in $X$ is called a \newword{$\boldsymbol{k}$-ground} for $X$ if every vertex of $X$ is adjacent to all but at most $k$ vertices in~$\Delta$.
\item We say that $X$ is \newword{$\boldsymbol{(n,k)}$-grounded} if there exists an $n$-simplex in $X$ that is a $k$-ground for~$X$.
\end{enumerate}
\end{definition}

Note that any face of a $k$-ground for $X$ is again a $k$-ground for~$X$.  Thus, an $(n,k)$-grounded complex is also $(n',k)$-grounded for all $n'<n$.

\begin{theorem}\label{thm:GroundedConnectivity}For $m,k\geq 1$, every finite\/ $(mk,k)$-grounded flag complex is\/ $(m-1)$-connected.
\end{theorem}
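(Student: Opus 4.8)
Here is the plan. I would prove this directly, with no induction on $m$, by a nerve-cover argument in which the vertices of the ground simplex supply the cover. Let $\Delta$ be an $mk$-simplex that is a $k$-ground for $X$, with vertex set $W=\{w_0,\dots,w_{mk}\}$, so $|W|=mk+1$. For each $j$ put $U_j=\overline{\mathrm{st}}(w_j,X)$, the closed star, a subcomplex of $X$. Since $X$ is a flag complex, $U_j$ is the simplicial cone $w_j\ast\mathrm{lk}(w_j,X)$ and is therefore contractible; more generally, for any index set $J\subseteq\{0,\dots,mk\}$ a simplex $\sigma$ lies in $\bigcap_{j\in J}U_j$ exactly when $\sigma\cup\{w_j\}$ is a simplex for every $j\in J$, i.e. (using the flag condition and that $W$ spans a simplex) exactly when $\sigma\cup\{w_j:j\in J\}$ is a simplex. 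Thus $\bigcap_{j\in J}U_j$ is the closed star of the simplex spanned by $\{w_j:j\in J\}$, which is again a nonempty cone, hence contractible. Setting $Y=\bigcup_{j=0}^{mk}U_j$, the nerve lemma for covers of a CW complex by subcomplexes applies; since all of the intersections $\bigcap_{j\in J}U_j$ are nonempty, the nerve is the full simplex on $mk+1$ vertices, so $Y$ is contractible.

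The second step is to show $X^{(m-1)}\subseteq Y$, i.e. that every simplex of $X$ with at most $m$ vertices lies in some $U_j$. Let $\sigma$ be such a simplex. Each vertex $u$ of $\sigma$ is non-adjacent to at most $k$ of the vertices of $W$ (by the $k$-ground hypothesis), so the set of vertices of $W$ failing to be adjacent-or-equal to some vertex of $\sigma$ has size at most $(\#\text{vertices of }\sigma)\cdot k\le mk<mk+1=|W|$. Hence there is a $w_j\in W$ that is adjacent to or equal to every vertex of $\sigma$; since $X$ is flag, $\sigma\cup\{w_j\}$ is a simplex, so $\sigma\in U_j\subseteq Y$. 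This gives the chain of subcomplexes $X^{(m-1)}\subseteq Y\subseteq X$.

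Finally I would combine these with skeletal connectivity. Fix a basepoint $x_0\in\Delta$. For each $i$ with $0\le i\le m-1$, the inclusion $X^{(m-1)}\hookrightarrow X$ induces a surjection $\pi_i(X^{(m-1)},x_0)\to\pi_i(X,x_0)$, because $X$ is obtained from $X^{(m-1)}$ by attaching cells of dimension $\ge m$. This surjection factors through $\pi_i(Y,x_0)$, which is trivial since $Y$ is contractible; therefore $\pi_i(X,x_0)$ is trivial. As $X$ is nonempty (it contains $\Delta$), this shows $X$ is $(m-1)$-connected, and the case $m=1$ (path-connectedness) is covered by $i=0$.

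I do not expect a serious obstacle here — the whole content is recognizing the right cover. The two points that need care are the flag-complex identification of each intersection $\bigcap_{j\in J}U_j$ with the closed star of a simplex (so the nerve comes out to be a full simplex), and the counting inequality $(\#\text{vertices of }\sigma)\cdot k\le mk<|W|$. That inequality is precisely where the hypothesis that $\Delta$ has dimension $mk$ (rather than something smaller) is used; it is also the reason the conclusion is $(m-1)$-connectivity and not contractibility, since $Y$ need not exhaust $X$ in dimensions $\ge m$.
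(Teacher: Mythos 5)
Your proof is correct, and it takes a genuinely different route from the paper's. The paper argues by induction on $m$: it filters $X$ by adding one vertex at a time starting from the ground simplex $\Delta$, writes each stage as $X_{i-1}\cup_{L_i}CL_i$ where $L_i$ is the link of the new vertex, and uses the grounding hypothesis to see that each $L_i$ contains an $mk$-simplex of $\Delta$ as a $k$-ground, so the inductive hypothesis makes every $L_i$ $(m-1)$-connected and each coning step raises connectivity by one. You instead argue in one shot: cover $Y=\bigcup_j U_j$ by the closed stars of the $mk+1$ ground vertices, identify each intersection $\bigcap_{j\in J}U_j$ (via flagness and the fact that $\Delta$ is a simplex) with the closed star of the simplex on $\{w_j:j\in J\}$, which is a cone and hence contractible, apply the nerve lemma to conclude $Y$ is contractible, and then use the count $m\cdot k<mk+1$ to place the whole $(m-1)$-skeleton inside $Y$, finishing by skeletal approximation. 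The counting step is exactly where the dimension $mk$ of $\Delta$ enters, playing the role that the link-grounding estimate plays in the paper. One cosmetic point: choose the basepoint to be a vertex of $\Delta$ (not an arbitrary point of $\Delta$) so that it lies in $X^{(m-1)}$. As for what each approach buys: yours avoids induction, and in fact never uses finiteness of $X$ (the cover has only $mk+1$ members), so it proves the statement for arbitrary $(mk,k)$-grounded flag complexes; the paper's argument is more elementary in its toolkit, needing only the gluing description $X_i\simeq X_{i-1}\cup_{L_i}CL_i$ rather than the nerve lemma, at the cost of the finite vertex-by-vertex filtration.
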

\begin{proof}We proceed by induction on $m$.  For $m=1$, the statement is that every finite $(k,k)$-grounded flag complex is connected, which is clear from the definition.

Now suppose that every finite $(mk,k)$-grounded flag complex is $(m-1)$-connected, and let $X$ be a finite $\bigl((m+1)k,k\bigr)$-grounded flag complex.  Then we can filter $X$ by a chain of flag complexes
\[
\Delta = X_0 \subset X_1 \subset \cdots \subset X_p = X.
\]
where $\Delta$ is a simplex of dimension $(m+1)k$ that is a $k$-ground for~$X$, and each~$X_i$ is obtained from $X_{i-1}$ by adding a single vertex~$v_i$.

Let $L_i$ denote the link of $v_i$ in $X_i$, and observe that each $X_i$ is homeomorphic to the union $X_{i-1} \cup_{L_i} CL_i$, where $CL_i$ denotes the cone on~$L_i$.  Since $\Delta$ is a \mbox{$k$-ground} for~$X$, we know that $L_i$ includes at least $mk+1$ vertices of~$\Delta$.  In particular, the intersection $L_i \cap \Delta$ contains an \mbox{$mk$-simplex}, which must be a $k$-ground for~$L_i$. By our induction hypothesis, it follows that each $L_i$ is \mbox{$(m-1)$-connected}.  Since $X_0 = \Delta$ is contractible, this proves that $X_i$ is \mbox{$m$-connected} for every~$i$, and in particular $X$ is \mbox{$m$-connected}.
\end{proof}

The criterion given in Theorem~\ref{thm:GroundedConnectivity} has already proven itself useful outside of the present context.  In~\cite{BeMa}, the first author and F.~Matucci use this criterion to prove that R\"{o}ver's simple group has type~$F_\infty$.  Also, M.~Zaremsky uses a generalization of the criterion developed here in~\cite{Za} to compute the $\Sigma$-invariants of generalized Thompson's groups.

Instead of applying Theorem~\ref{thm:GroundedConnectivity} directly to rearrangement groups, we will use it to derive a simpler criterion that meets our needs.

\begin{definition}A flag complex $X$ is \newword{uniformly $\boldsymbol{k}$-dense} if every vertex of~$X$ is adjacent to all but at most $k$ other vertices.
\end{definition}

\begin{theorem}\label{thm:DenseConnectivity}Every uniformly $k$-dense flag complex with at least $mk(k+1)+1$ vertices is $(m-1)$-connected.
\end{theorem}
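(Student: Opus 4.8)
The plan is to reduce this to Theorem~\ref{thm:GroundedConnectivity} by producing, inside $X$, an $mk$-simplex that is automatically a $k$-ground. The point is that uniform $k$-density is such a strong hypothesis that \emph{any} simplex of $X$ is a $k$-ground, so all that really needs to be done is to manufacture a sufficiently large simplex, and the bound $mk(k+1)+1$ is exactly what is needed to do so greedily.

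First I would build an $mk$-simplex in $X$ by a greedy argument. Start from any vertex $v_0$, and suppose $v_0,\dots,v_{i-1}$ already span a simplex with $i\le mk$. A vertex is unavailable to extend this simplex only if it equals some $v_j$ ($j<i$) or fails to be adjacent to some $v_j$; since $X$ is uniformly $k$-dense each $v_j$ has at most $k$ non-neighbours, so at most $i(k+1)\le mk(k+1)$ vertices are unavailable. As $X$ has at least $mk(k+1)+1$ vertices, a new vertex $v_i$ is available; because $v_0,\dots,v_{i-1}$ are already pairwise adjacent and $v_i$ is adjacent to all of them, flagness of $X$ promotes $\{v_0,\dots,v_i\}$ to an $i$-simplex. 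Iterating to $i=mk$ yields an $mk$-simplex $\Delta$. Next I would verify that $\Delta$ is a $k$-ground for $X$. If $w\notin V(\Delta)$, then uniform $k$-density says $w$ is non-adjacent to at most $k$ vertices of all of $X$, hence to at most $k$ vertices of $\Delta$. If $w\in V(\Delta)$, then $w$ is adjacent to every other vertex of the simplex $\Delta$, so $w$ is non-adjacent to at most one vertex of $\Delta$ (itself), and $1\le k$. Thus $X$ is $(mk,k)$-grounded, and when $X$ is finite, Theorem~\ref{thm:GroundedConnectivity} gives that $X$ is $(m-1)$-connected.

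To cover a possibly infinite $X$, I would pass to finite subcomplexes in the usual way: a map of $S^i$ with $i\le m-1$ into $X$ has image in a finite subcomplex $Y$, and the full (induced) subcomplex $Z$ of $X$ on $V(Y)\cup V(\Delta)$ is a finite flag complex still containing $\Delta$, and every vertex of $Z$ is still non-adjacent to at most $k$ vertices of $\Delta$ by the argument just given. Hence $Z$ is finite and $(mk,k)$-grounded, so $(m-1)$-connected by Theorem~\ref{thm:GroundedConnectivity}, and the sphere bounds a disc inside $Z\subseteq X$; together with nonemptiness of $X$ (forced by the hypothesis on the number of vertices), this shows $X$ is $(m-1)$-connected. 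I do not expect a genuine obstacle here, since all the topological work is packaged into Theorem~\ref{thm:GroundedConnectivity}; the only points needing care are getting the counting in the greedy step to match $mk(k+1)+1$ so that the simplex produced has dimension \emph{exactly} $mk$, and treating the vertices of $\Delta$ separately in the $k$-ground check so that a vertex's self-non-adjacency does not cost an extra unit.
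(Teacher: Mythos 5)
Your proposal is correct and follows essentially the same route as the paper: the paper's Lemma~\ref{lem:DenseGrounded} performs exactly your greedy construction of an $mk$-simplex (with the same count of at most $j(k+1)$ excluded vertices), observes that uniform $k$-density makes every simplex a $k$-ground, and then invokes Theorem~\ref{thm:GroundedConnectivity}. Your extra care about possibly infinite complexes (passing to a finite full subcomplex containing $\Delta$) is a harmless refinement that the paper leaves implicit, since the contraction complexes to which the theorem is applied are finite.
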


Theorem~\ref{thm:DenseConnectivity} follows almost immediately from the following lemma.

\begin{lemma}\label{lem:DenseGrounded} Every uniformly $k$-dense flag complex with at least $n(k+1)+1$ vertices is $(n,k)$-grounded.
\end{lemma}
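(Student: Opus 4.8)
The plan is to reduce the lemma to the existence of a single large simplex, and then build that simplex by greedily extending a clique. The reduction is immediate: if $X$ is uniformly $k$-dense, then every vertex $w$ of $X$ is non-adjacent to at most $k$ vertices of $X$ in total, hence to at most $k$ of the vertices of any given simplex $\Delta$ of $X$. So the condition defining a $k$-ground holds for \emph{every} simplex of $X$, and the lemma amounts to the claim that a uniformly $k$-dense flag complex with at least $n(k+1)+1$ vertices contains an $n$-simplex.

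To produce such a simplex, note that since $X$ is flag it suffices to find $n+1$ pairwise-adjacent vertices, and I would build these one at a time. Suppose $v_0,\dots,v_{i-1}$ have been chosen, pairwise adjacent, with $i \le n$, and consider the set $A_i$ of vertices adjacent to all of $v_0,\dots,v_{i-1}$. A vertex fails to lie in $A_i$ only if it is one of the $i$ chosen vertices, or is one of the at most $k$ non-neighbors of some $v_j$; moreover these non-neighbors are disjoint from $\{v_0,\dots,v_{i-1}\}$ precisely because those vertices are pairwise adjacent. Hence at most $i(k+1)$ vertices lie outside $A_i$, so $A_i$ is nonempty as long as $X$ has more than $i(k+1)$ vertices. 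Since $X$ has at least $n(k+1)+1$ vertices, we may keep choosing until we have obtained $v_0,\dots,v_n$; by flagness these span an $n$-simplex, which by the previous paragraph is a $k$-ground for $X$.

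I do not expect a genuine obstacle here: the argument is a short counting estimate, and the only points needing a little care are verifying in the greedy step that the non-neighbors of an already-chosen vertex do not overlap the chosen clique (which is where pairwise adjacency is used), together with the standard remark that in a flag complex a set of mutually adjacent vertices automatically spans a simplex. Finally, Theorem~\ref{thm:DenseConnectivity} will follow by applying this lemma with $n = mk$ to conclude that a uniformly $k$-dense flag complex with at least $mk(k+1)+1$ vertices is $(mk,k)$-grounded, and then invoking Theorem~\ref{thm:GroundedConnectivity}.
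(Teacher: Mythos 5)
Your proposal is correct and follows essentially the same route as the paper: observe that uniform $k$-density makes every simplex a $k$-ground, then greedily build an $n$-simplex by the same counting estimate (each chosen vertex excludes at most $k+1$ candidates, and $n(k+1)+1$ vertices guarantee the construction never stalls). The minor bookkeeping differences (e.g.\ noting the non-neighbors avoid the chosen clique) do not change the argument.
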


\begin{proof}  Let $X$ be a uniformly $k$-dense flag complex with at least $n(k+1)+1$ vertices. Then every simplex in $X$ is a $k$-ground, so it suffices to show that $X$ has at least one $n$-simplex.

We choose the vertices $v_0,v_1,\ldots v_n$ of this simplex inductively.  First, let $v_0$ be any vertex of $X$.  Now suppose that we have already chosen vertices $v_0,\ldots,v_{j-1}$ for some $j\leq n$.  Each $v_i$ is adjacent to all but at most $k+1$ vertices of $X$, including $v_i$ itself.  Then all but at most $j(k+1)$ vertices of $X$ are adjacent to all of the vertices $v_0,\ldots,v_{j-1}$.  Since
\[
\bigl(n(k+1)+1\bigr) - j(k+1) \;=\; (n-j)(k+1)+1 \;\geq\; 1,
\]
there is at least one vertex $v_j$ of $X$ that is adjacent to all of the vertices $v_0,\ldots,v_{k-1}$.  Continuing in this fashion, we arrive at an $n$-simplex $\{v_0,\ldots,v_n\}$ in~$X$.
\end{proof}

\begin{proof}[Proof of Theorem \ref{thm:DenseConnectivity}] Let $m\geq 0$, and let $X$ be a uniformly $k$-dense flag complex with at least $mk(k+1)+1$ vertices.  Let $n=mk$.  Then $X$ has at least $n(k+1)+1$ vertices, so by Lemma~\ref{lem:DenseGrounded}, $X$ is $(n,k)$-grounded.  Then $X$ is $(m-1)$-connected by Theorem~\ref{thm:GroundedConnectivity}.
\end{proof}

\subsection{Rearrangement Groups of Type $F_\infty$}
\label{subsec:finfty}

In this subsection we prove Theorem~\ref{thm:finfty} and use it to show that all of the rearrangement groups in a certain infinite family have type~$F_\infty$.

Finite branching is particularly fundamental to our arguments. If $\mathcal{R}$ has finite branching, then there is a uniform upper bound on the degrees of vertices for graphs $G \in \Gamma(\mathcal{R})$, for any such $G$ has an expansion that is isomorphic to a graph in the full expansion sequence.  This allows us to bound the number of collapsible subgraphs that can intersect in any such~$G$.

For the following proposition, we say that two collapsible subgraphs of a graph \newword{overlap} if they have an edge in common.

\begin{lemma}
Let\/ $\mathcal{R}$ be a replacement system with finite branching whose replacement graph $R$ is connected.  Then there exists an\/ $i\in\mathbb{N}$ such that, for every graph $G\in\Gamma(\R)$, each collapsible subgraph of $G$ overlaps with at most\/ $i$ other collapsible subgraphs.
\label{lem:bound}
\end{lemma}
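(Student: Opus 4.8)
The plan is to show that, uniformly over all $G\in\Gamma(\R)$, a collapsible subgraph of $G$ is a \emph{connected} subgraph with a \emph{fixed} number of edges, and then to bound how many such subgraphs can pass through any single edge of $G$. For the first point, recall that a characteristic map is a graph isomorphism from $R$ or from $\Rloop$ onto a subgraph $S$ of $G$; since $\R$ is expanding, the initial and terminal vertices of $R$ are joined by no edge, so $\Rloop$ has the same number of edges $N:=|E(R)|$ as $R$. Hence every collapsible subgraph of every $G\in\Gamma(\R)$ has exactly $N$ edges, and, since $R$ is connected and therefore so is $\Rloop$, every collapsible subgraph is a connected subgraph of $G$ spanning at most $N+1$ vertices.

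Next I would extract a uniform degree bound from finite branching. By hypothesis there is a $D\in\N$ bounding the degrees of all vertices in the full expansion sequence $\{G_n\}$. Any $G\in\Gamma(\R)$ has an expansion $E$ isomorphic to some $G_n$, and $E$ is obtained from $G$ by a sequence of simple expansions. A simple expansion changes $\deg(u)$, for a vertex $u$ of the graph being expanded, only when $u$ is an endpoint of the expanded edge $\edge$, and then by $-1+\deg_R(b)$ (if $\edge$ is not a loop, with $b$ the relevant boundary vertex of $R$) or by $-2+\deg_R(v)+\deg_R(w)$ (if $\edge$ is a loop, with $v,w$ the boundary vertices of $R$); since $R$ has no isolated vertices each of these changes is nonnegative. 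Iterating, $\deg_G(u)\le\deg_E(u)\le D$ for every vertex $u$ of $G$, so every graph in $\Gamma(\R)$ has maximum degree at most $D$.

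The combinatorial core is the following estimate: in any graph of maximum degree at most $D$, the number of connected subgraphs with exactly $N$ edges that contain a prescribed edge $\edge_0$ is at most $M:=(ND)^N$. I would prove this by the usual exploration argument: such a subgraph $S'$, being connected, admits an ordering $\edge_0=\edge_1,\edge_2,\dots,\edge_N$ of its edges in which each $\edge_j$ with $j\ge 2$ is incident to a vertex of the connected subgraph formed by $\edge_1,\dots,\edge_{j-1}$, which has at most $j$ vertices, each of degree at most $D$; so there are at most $jD$ choices for $\edge_j$, giving at most $\prod_{j=2}^{N} jD\le (ND)^N$ orderings and hence at most $M$ such subgraphs. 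Combining the three observations: any collapsible subgraph overlapping a fixed collapsible subgraph $S$ shares one of the $N$ edges of $S$ and is itself a connected $N$-edge subgraph containing that edge, so at most $NM$ collapsible subgraphs overlap $S$; taking $i=NM$, a quantity depending only on $\R$, completes the proof.

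I expect the one delicate point to be bookkeeping rather than depth: checking that a simple expansion never lowers a vertex degree (handling loops and multiple edges correctly), and confirming that connectedness of $R$ genuinely forces a collapsible subgraph to be a connected subgraph of bounded size, so that the exploration count applies with a single constant. The counting in the last two paragraphs is entirely routine.
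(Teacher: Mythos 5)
Your proof is correct, and it rests on the same two pillars as the paper's argument: connectedness of $R$ forces every collapsible subgraph to be a connected subgraph of uniformly bounded size, and finite branching gives a uniform degree bound on all graphs in $\Gamma(\R)$ (you verify the degree-monotonicity under simple expansions explicitly, including the loop case, which the paper only asserts in the remark preceding the lemma). Where you genuinely diverge is the final counting step. The paper lets $d$ be the diameter of $R$ and observes that any collapsible subgraph overlapping $S$ lies in a ball of radius $2d$ about a vertex of $S$; such a ball has at most $k^{2d}$ edges, so crudely at most $2^{k^{2d}}$ subgraphs overlap $S$. You instead fix a shared edge and count connected subgraphs with exactly $N=|E(R)|$ edges through it by an edge-by-edge exploration, getting at most $(ND)^N$ per edge and $i=N(ND)^N$ overall. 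Your route avoids introducing the diameter and counts only genuine candidates (connected $N$-edge subgraphs) rather than all edge subsets of a ball, so it yields a more economical constant and a slightly more self-contained argument; the paper's route is shorter on the page because it is content with a very coarse bound. One cosmetic remark: your appeal to the expanding condition to conclude that $\Rloop$ has $N$ edges is unnecessary, since identifying the two boundary vertices never changes the edge count (an edge joining them would simply become a loop); this does not affect the validity of the proof.
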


\begin{proof}
Let $d$ be the diameter of the graph $R$ and let $k$ be the upper bound on the degrees of vertices of graphs in~$\Gamma(\R)$.  Let $G\in\Gamma(\R)$, and let $S$ be a collapsible subgraph of~$G$.  Since $R$ is connected, any collapsible subgraph of $G$ that overlaps with $S$ must be contained in a ball of radius $2d$ centered at any vertex in~$S$.  Such a ball has at most $k^{2d}$ edges, so $S$ intersects at most $2^{k^{2d}}$ other collapsible subgraphs.
\end{proof}

We are now in a position to prove our main theorem for this section.

\begin{proof}[Proof of Theorem \ref{thm:finfty}]
Let $\R = (G_0,e\to R)$ be a replacement system with finite branching whose replacement graph $R$ is connected, and suppose that for every $m\geq 1$, all but finitely many of the graphs of $\Gamma(R)$ have at least $m$ collapsible subgraphs.  We must show that the corresponding rearrangement group has type~$F_\infty$.

Observe first that any collapsible subgraph of any graph in $\Gamma(\R)$ corresponds to at most $j$ different characteristic maps, where $j$ is the maximum of the orders of the automorphism groups of $R$ and~$\Rloop$.  If $i$ is the upper bound provided by Lemma~\ref{lem:bound}, it follows that each characteristic map of $R$ into any graph in $\Gamma(\R)$ overlaps with at most $k = (i+1)j - 1$ other characteristic maps.  Thus the contraction complex of any graph in $\Gamma(\R)$ is uniformly $k$-dense.

Let $n\geq 1$.  By Theorem~\ref{thm:DenseConnectivity}, the complex $\mathrm{Con}(G,\R)$ is $(n-1)$-connected for every graph $G\in\Gamma(\R)$ with at least $nk(k+1)+1$ collapsible subgraphs.  Therefore, by Theorem~\ref{thm:FinitenessProperties}, the corresponding rearrangement group has type~$F_n$.  This holds for all~$n$, so the corresponding rearrangement group has type~$F_{\infty}$.
\end{proof}

We now apply Theorem~\ref{thm:finfty} to the Vicsek family rearrangement groups described in Example~\ref{ex:VicsekFamily}.

\begin{theorem} \label{thm:vicsekfinfty}
The rearrangement groups for the Vicsek family are all of type~$F_{\infty}$.
\end{theorem}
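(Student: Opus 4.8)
The plan is to deduce Theorem~\ref{thm:vicsekfinfty} directly from Theorem~\ref{thm:finfty}. Fix $n\geq 3$ and let $\R_n$ be the $n$-th Vicsek replacement system, with graph family $\Gamma(\R_n)$ described explicitly in Example~\ref{ex:VicsekGraphFamily}: it consists of the bipartite directed trees whose sources have out-degree exactly $n$ and whose sinks have in-degree one or two. Three hypotheses of Theorem~\ref{thm:finfty} must be checked. First, finite branching: the graphs in the full expansion sequence lie in $\Gamma(\R_n)$, so all of their vertices have degree at most $n$; equivalently, the initial and terminal vertices of the replacement graph $R$ have degree one, so finite branching holds by the observation made just after Theorem~\ref{thm:finfty}. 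Second, $R$ is a tree, hence connected. The substance of the proof is the third hypothesis: that for every $m\geq 1$, all but finitely many $G\in\Gamma(\R_n)$ have at least $m$ distinct collapsible subgraphs.

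To verify this I would first note that, by finite branching, $\Gamma(\R_n)$ contains only finitely many isomorphism types of graphs with at most $N$ edges, so it suffices to prove that the number of collapsible subgraphs of $G$ tends to infinity with $|E(G)|$. I would then prove a linear lower bound. Let $p$ denote the number of sources of $G\in\Gamma(\R_n)$; counting edges and using that $G$ is a tree gives $|E(G)|=np$, exactly $p-1$ sinks of in-degree two, and $(n-2)p+2$ leaves. A short unwinding of the definitions of characteristic map and collapsible subgraph for this particular $R$ shows that a collapsible subgraph is essentially the local replacement unit around a source $\sigma$, and that $\sigma$ hosts such a unit precisely when it has exactly one or two children of in-degree two (its remaining children then being forced to be leaves); since the hosting source is recoverable from the subgraph (it is the unique vertex of the subgraph all of whose $G$-edges lie in it), distinct hosting sources contribute distinct collapsible subgraphs. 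Finally, whenever $G\neq G_0$ every source must have at least one in-degree-two child, since otherwise that source together with its leaf children would already be a connected component. As the in-degree-two children are distributed among the $p$ sources with total multiplicity $2(p-1)<2p$, a pigeonhole argument (each source that is not a host contributes at least $3$ to this total) shows that more than $p/2$ of the sources are hosts. Hence $G$ has more than $p/2=|E(G)|/(2n)$ collapsible subgraphs, as required.

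With all three hypotheses in hand, Theorem~\ref{thm:finfty} applies and shows that the $n$-th Vicsek rearrangement group is of type $F_\infty$ for every $n\geq 3$, which is the content of Theorem~\ref{thm:vicsekfinfty}. The main obstacle is the combinatorial heart of the third step: correctly identifying which local configurations in a Vicsek-family tree support a collapse (a fiddly but elementary bookkeeping with characteristic maps into this specific $R$, including keeping track of $R$'s automorphisms so that one counts subgraphs rather than maps), and then extracting the linear lower bound on their number from the global tree structure. By contrast, finite branching, connectedness of $R$, and the final appeal to Theorem~\ref{thm:finfty} are immediate.
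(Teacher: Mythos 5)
Your proposal is correct and follows essentially the same route as the paper: check finite branching and connectedness of $R$, identify the collapsible subgraphs of a Vicsek-family tree as the replacement units around sources having exactly one or two in-degree-two children, establish a linear-in-$|E(G)|$ lower bound on their number, and invoke Theorem~\ref{thm:finfty}. The paper packages your counting step via an auxiliary tree $T_G$ on the sources and the fact that at least half the vertices of a finite tree have degree at most two, which is exactly the degree-sum/pigeonhole argument you carry out directly (your total multiplicity $2(p-1)$ is the degree sum of $T_G$).
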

\begin{proof}Let $\R_n$ denote the replacement system for the $n$th Vicsek rearrangement group, as shown in Table~\ref{tab:VicsekFamily}. Note that $\R_n$ has finite branching and that the replacement graph is connected. Thus, by Theorem~\ref{thm:finfty}, it suffices to show that for each~$m \geq 1$, all but finitely many graphs in the graph family $\Gamma(\R_n)$ have at least $m$ collapsible subgraphs.

\begin{figure}
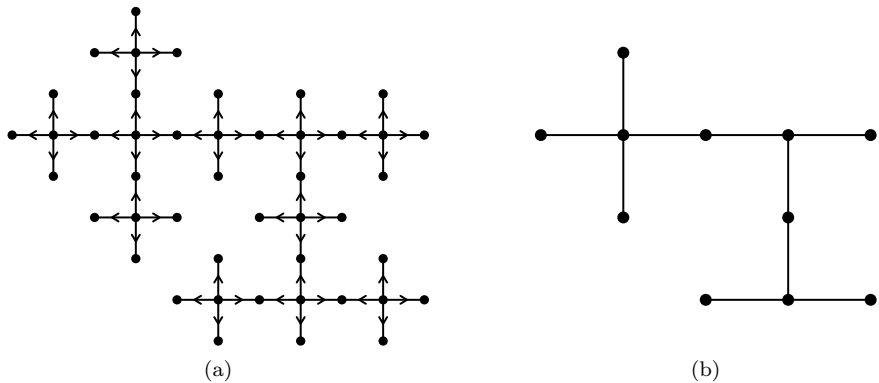

\centering
\subfloat[]{\vgraphics{BigVicsekGraph}}
\hfill
\subfloat[]{\vgraphics{VicsekStructureTree}}
\caption{(a) A graph $G$ in the Vicsek graph family. (b) The corresponding tree $T_G$.}
\label{fig:VicsekStructureTree}
\end{figure}
The graphs in the graph family $\Gamma(\mathcal{R}_n)$ were described in Example~\ref{ex:VicsekGraphFamily}.  Given any $G\in\Gamma(\mathcal{R}_n)$, let $T_G$ be the tree that has one vertex for each source in $G$ and an edge between two vertices if the corresponding sources are a distance two apart in $G$.  For example, Figure~\ref{fig:VicsekStructureTree} shows a graph $G$ in $\Gamma(\R_4)$ and the corresponding tree~$T_G$.

Now, observe that there is a collapsible subgraph in $G$ for each vertex of degree one in $T_G$ and two collapsible subgraphs in $G$ for each vertex of degree two in $T_G$.  But at least half the vertices in any finite tree have degree 1 or~2, so $G$ will have at least $m$ collapsible subgraphs as long as $T_G$ has at least $2m$ vertices.  This occurs whenever $G$ has at least $2mn$ edges, and therefore $G$ has at least $m$ collapsible subgraphs for all but finitely many~$G$.
\end{proof}

\subsection*{Acknowledgements}
The authors would like to thank Collin Bleak, Kai-Uwe Bux, Marco Marschler, Francesco Matucci, Stefan Witzel, and Matt Zaremsky for many helpful conversations and suggestions.  We would also like to thank the anonymous referee for several helpful comments.

\bigskip
\newcommand{\arxiv}[1]{\href{https://arxiv.org/abs/#1}{arXiv:#1}}
\newcommand{\doi}[1]{\href{https://doi.org/#1}{doi:#1}}
\bibliographystyle{plain}

\end{document}